\newcommand{\norm}[1]{\left\lVert #1 \right\rVert}
\newcommand{\inner}[2]{\left\langle #1,#2 \right\rangle}
\newcommand{\abs}[1]{\left|#1\right|}
\newcommand{\un}[0]{\boldsymbol{\nu}}
\newcommand{\normal}[1]{\frac{\partial #1}{\partial \un}}
\renewcommand{\div}[0]{\textnormal{div}\,}
\newcommand{\curl}[0]{\textnormal{\textbf{curl}}\,}
\newcommand{\grad}[0]{\textnormal{\textbf{grad}}\,}
\newcommand{\Div}[0]{\textnormal{Div}}
\newcommand{\Curl}[0]{\textnormal{Curl}}
\newcommand{\Hcurl}[1]{\mathbf{H}(\textnormal{\textbf{curl}},#1)}
\newcommand{\Hcurltr}[1]{\mathbf{H}_0(\textnormal{\textbf{curl}},#1)}
\newcommand{\Hcurlloc}[1]{\mathbf{H}_{\textnormal{loc}}(\textnormal{\textbf{curl}},#1)}
\newcommand{\Hcurlinc}[1]{\mathbf{H}_{\textnormal{inc}}(\textnormal{\textbf{curl}},#1)}
\newcommand{\Hdiv}[1]{\mathbf{H}(\textnormal{div},#1)}
\newcommand{\Hdivzero}[1]{\mathbf{H}(\textnormal{div}^0,#1)}
\newcommand{\Hdivnormzero}[1]{\mathbf{H}_0(\textnormal{div}^0,#1)}
\newcommand{\boldPsi}[0]{\boldsymbol{\Psi}}
\newcommand*\myat{{\fontfamily{ptm}\selectfont @}}
\renewcommand{\Im}[0]{\textnormal{Im}}
\renewcommand{\Re}[0]{\textnormal{Re}}
\newcommand{\proofend}{\qed\hfill $\Box$}
\numberwithin{equation}{section}
\numberwithin{theorem}{section}
\title{Modified electromagnetic transmission eigenvalues in inverse scattering theory\thanks{A short summary of the main results of this work have been submitted for publication in conference proceedings.}}
\author{S. Cogar\thanks{Department of Mathematics, Rutgers University, Piscataway, NJ 08854 ({\color{blue}samuel.cogar\myat rutgers.edu}).} \and P. Monk\thanks{Department of Mathematical Sciences, University of Delaware, Newark, DE 19716 ({\color{blue}monk\myat udel.edu}).\funding{This material is based upon work supported by the Army Research Office through the National Defense Science and Engineering Graduate (NDSEG) Fellowship, 32 CFR 168a, and by the Air Force Office of Scientific Research under Award No. FA9550-17-1-0147. }}}
\date{}
\begin{document}

\maketitle

\begin{abstract}
A recent problem of interest in inverse problems has been the study of eigenvalue problems arising from scattering theory and their potential use as target signatures in nondestructive testing of materials. Towards this pursuit we introduce a new eigenvalue problem related to Maxwell's equations that is generated from a comparison of measured scattering data to that of a non-standard auxiliary scattering problem. This choice of auxiliary problem permits the application of regularity results for Maxwell's equations in order to show that a related interior transmission problem possesses the Fredholm property, which is used to establish that the eigenvalues are discrete. We investigate the properties of this new class of eigenvalues and show that the eigenvalues may be determined from measured scattering data, concluding with a simple demonstration of this result.
\end{abstract}

\begin{keywords}
	inverse scattering, nondestructive testing, modified transmission eigenvalues, linear sampling method, Maxwell's equations
\end{keywords}

\begin{AMS}
	35J25, 35P05, 35P25, 35R30
\end{AMS}

\section{Introduction} \label{sec_introduction}

A recent area of research in inverse problems has been the development and study of new eigenvalue problems arising from scattering theory (cf. \cite{audibert_cakoni_haddar,audibert_chesnel_haddar,audibert_chesnel_haddar2019,cakoni_colton_meng_monk,camano_lackner_monk,cogar,cogar2019,cogar2020,cogar_colton_meng_monk,cogar_colton_monk,cogar_colton_monk2019}). In addition to generating mathematically interesting problems, the study of the resulting eigenvalues may find potential application in nondestructive testing of materials. In particular, eigenvalues could potentially be used as a target signature in order to characterize a scattering medium and indicate when it has experienced some perturbation of its material coefficients. An early example is the use of transmission eigenvalues, and we refer to \cite{cakoni_colton_haddar} for a comprehensive treatment of the subject. Transmission eigenvalues carry information about the medium of interest and noticeably shift in response to changes in the medium, but their detection from scattering data requires multifrequency data. In addition, only real transmission eigenvalues can be detected, and for an absorbing medium no real transmission eigenvalues exist. \par

In order to remedy these shortcomings, new eigenvalue problems have been generated by comparing the measured scattering data to that of an auxiliary scattering problem that depends on a parameter and is entirely artificial, i.e. it does not depend upon the physical medium of interest. If we seek values of this parameter for which the measured and auxiliary scattering data might coincide for certain types of incident fields, then we arrive at an eigenvalue problem dependent upon the material coefficients of the medium in which this parameter serves as the eigenvalue. Each choice of auxiliary scattering problem produces a new eigenvalue problem that may potentially be used to detect flaws in a medium. The first example of this approach in \cite{cakoni_colton_meng_monk} featured auxiliary data from an exterior impedance problem with parameter $\lambda$, which resulted in the well-known Stekloff eigenvalue problem. Through a series of numerical examples in two dimensions the authors demonstrated that Stekloff eigenvalues may be detected from measured scattering data and that they shift in response to changes in the refractive index of the medium. \par

The sensitivity of Stekloff eigenvalues to changes in the scattering medium was not always found to be significant, and this observation led the authors of \cite{cogar_colton_meng_monk} to choose the auxiliary data as that of scattering by an inhomogeneous medium that depends upon an additional fixed parameter $\gamma$. The resulting eigenvalue problem has a similar form to the standard transmission eigenvalue problem, but the structure and techniques used to analyze it were of a different nature. The authors demonstrated that in many cases the fixed parameter $\gamma$ may be tuned in order to increase the sensitivity of the so-called modified transmission eigenvalues to changes in the medium, often with an increase of an order of magnitude. Variations of this idea were explored in \cite{cogar,cogar2019,cogar_colton_monk}. \par

Both of the problems mentioned above related to acoustic scattering, but in \cite{camano_lackner_monk} this approach was first applied to electromagnetic scattering, which is the context of our current investigation. This first foray into generating electromagnetic eigenvalue problems again saw the choice of auxiliary data arising from an exterior impedance problem, but the resulting eigenvalue problem lacked the same solvability properties of its acoustic counterpart. In particular, the authors used a simple example to show that the eigenvalues could no longer correspond to those of a compact operator, which would prove problematic in the standard approach to establishing solvability results of the associated electromagnetic Stekloff eigenvalue problem (an issue later overcome in \cite{halla2,halla1} using $T$-coercivity). Recognizing that the auxiliary problem could be changed at will, the authors modified the boundary condition of the auxiliary problem in order to remove the degenerate branch of eigenvalues and obtained a well-behaved eigenvalue problem. Through the numerical examples in \cite{camano_lackner_monk,cogar_colton_monk2019}, it has been shown that this generalization of electromagnetic Stekloff eigenvalues is sensitive to changes in the electromagnetic properties of a medium, but like the acoustic case this shift is not always significant. \par

As in the acoustic case, this observation leads us to consider an auxiliary problem that depends on a fixed tuning parameter $\gamma$. The obvious first choice is the electromagnetic version of the problem considered in \cite{cogar_colton_meng_monk}, which represents electromagnetic scattering by a medium with constant electric permittivity $\eta$ and magnetic permeability $\gamma$. By similar reasoning to \cite{camano_lackner_monk} and \cite{cogar_colton_meng_monk} this choice results in the modified transmission eigenvalue problem in which we seek $\eta\in\mathbb{C}$ and a nonzero pair $(\mathbf{w},\mathbf{v})$ such that
\begin{subequations} \label{mp}
\begin{align}
\curl\curl\mathbf{w} - k^2\epsilon\mathbf{w} &= \mathbf{0} \text{ in } B, \label{mp1} \\
\curl\gamma^{-1}\curl\mathbf{v} - k^2\eta\mathbf{v} &= \mathbf{0} \text{ in } B, \label{mp2} \\
\un\times(\mathbf{w} - \mathbf{v}) &= \mathbf{0} \text{ on } \partial B, \label{mp3} \\
\un\times(\curl\mathbf{w} - \gamma^{-1}\curl\mathbf{v}) &= \mathbf{0} \text{ on } \partial B, \label{mp4}
\end{align}
\end{subequations}
where $\epsilon$ is the relative electric permittivity of the physical medium, $k>0$ is the wave number, and $B$ is a Lipschitz domain in $\mathbb{R}^3$ containing the support of $1-\epsilon$. We will provide more assumptions on these quantities in the next section, but for now we examine the structure of the eigenvalues of \eqref{mp} in the simple case where $B$ is the unit ball in $\mathbb{R}^3$ and $\epsilon$ is a constant in $B$. \par

In this case we may use separation of variables in order to solve \eqref{mp} in a similar manner to \cite{camano_lackner_monk}, and the result is that $\eta\neq0$ is an eigenvalue if and only if for some $n\in\mathbb{N}_0$ it is a zero of one of the determinant functions
%\begin{subequations}
%\begin{align}
%d_n^{(a)}(\eta) &= \left(1-\gamma^{-1}\right)j_n(k\sqrt{\epsilon})j_n(k\sqrt{\gamma\eta}) + k\sqrt{\epsilon} j_n'(k\sqrt{\epsilon})j_n(k\sqrt{\gamma\eta}) \nonumber\\
%&\hspace{2in}- k\sqrt{\gamma^{-1}\eta} j_n(k\sqrt{\epsilon}) j_n'(k\sqrt{\gamma\eta}), \label{det_a} \\
%d_n^{(b)}(\eta) &= (\eta-\epsilon)j_n(k\sqrt{\epsilon})j_n(k\sqrt{\gamma\eta}) + k\eta\sqrt{\epsilon} j_n'(k\sqrt{\epsilon})j_n(k\sqrt{\gamma\eta}) \nonumber\\
%&\hspace{2in}- k\epsilon\sqrt{\gamma\eta} j_n(k\sqrt{\epsilon}) j_n'(k\sqrt{\gamma\eta}), \label{det_b}
%\end{align}
%\end{subequations}
\begin{subequations} \label{det}
\begin{align}
d_n^{(a)}(\eta) &= \left(1-\gamma^{-1}\right)j_n(k\sqrt{\epsilon})j_n(k\sqrt{\gamma\eta}) + k\sqrt{\epsilon} j_n'(k\sqrt{\epsilon})j_n(k\sqrt{\gamma\eta}) - k\sqrt{\gamma^{-1}\eta} j_n(k\sqrt{\epsilon}) j_n'(k\sqrt{\gamma\eta}), \label{det_a} \\
d_n^{(b)}(\eta) &= (\eta-\epsilon)j_n(k\sqrt{\epsilon})j_n(k\sqrt{\gamma\eta}) + k\eta\sqrt{\epsilon} j_n'(k\sqrt{\epsilon})j_n(k\sqrt{\gamma\eta}) - k\epsilon\sqrt{\gamma\eta} j_n(k\sqrt{\epsilon}) j_n'(k\sqrt{\gamma\eta}), \label{det_b}
\end{align}
\end{subequations}
where $j_n$ is the spherical Bessel function of the first kind of order $n$. Unlike in \cite{camano_lackner_monk}, we cannot simply solve for the eigenvalues in this case, as they are roots of a family of transcendental functions. Thus, we instead provide a plot of these roots in Figure \ref{fig_sov1} for $k = 1$, $\epsilon = 2$, $\gamma = 0.5$.

\begin{figure}
\begin{center}
\includegraphics[scale=0.5]{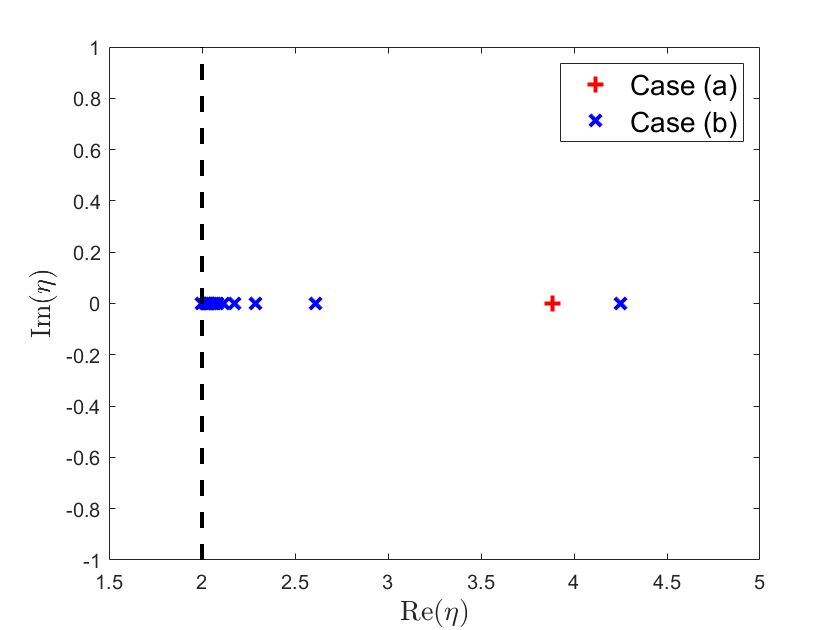}
\caption{The first few eigenvalues of \eqref{mp} computed using separation variables as the roots of the determinant functions defined in \eqref{det}. The roots of $d_n^{(a)}$ and $d_n^{(b)}$ are marked by a red $+$ symbol and a blue $\times$ symbol, respectively. The vertical dashed line marks the constant value of the permittivity $\epsilon$.}
\label{fig_sov1}
\end{center}
\end{figure}

We see from Figure \ref{fig_sov1} that the roots of the family $\{d_n^{(a)}\}$ appear to diverge towards $+\infty$, whereas the roots of $\{d_n^{(b)}\}$ do not. In \cite{chesnel} it is shown that the set of standard transmission eigenvalues for Maxwell's equations is discrete without finite accumulation point whenever $\epsilon-1$ is bounded away from zero. Performing the same calculations in the present case implies the same result for the eigenvalues of \eqref{mp} whenever $\epsilon-\eta$ is bounded away from zero, i.e. the eigenvalues are discrete without finite accumulation point in the domain $\mathbb{C}\setminus\{\epsilon\}$ in the case of constant $\epsilon$. From this result we see that the only possible finite accumulation point in this case is $\eta=2$, and we observe this accumulation point of the case (b) eigenvalues in Figure \ref{fig_sov1}. \par

We have thus encountered the same difficulty as in \cite{camano_lackner_monk}, in which the eigenvalues accumulate at both infinity and some finite point. The usual approach in studying this type of eigenvalue problem is to define a solution operator $\Psi$ whose spectrum is related to the eigenvalues of \eqref{mp} and to prove that $\Psi$ is compact. However, the spectral theorem for compact operators implies that the eigenvalues of $\Psi$ must accumulate only at zero, and hence the eigenvalues of \eqref{mp} may only accumulate at infinity. Therefore, our numerical evidence suggests that the eigenvalues of \eqref{mp} cannot be related to the spectrum of a compact operator, and we have lost one of our main analytical tools. While we note that techniques similar to those used in \cite{halla2,halla1} for the unmodified electromagnetic Stekloff eigenvalue problem might applied to analyze \eqref{mp}, it is not immediately clear how to do so. As a consequence, properties of these eigenvalues are currently unknown. \par

%\begin{figure}
%\begin{center}
%\includegraphics[scale=0.75]{sov2.png}
%\caption{A zoomed in version of the plot in Figure \ref{fig_sov1}. The eigenvalues corresponding to case (b) appear to accumulate at $2$.}
%\label{fig_sov2}
%\end{center}
%\end{figure}

This observation motivates us to consider a different eigenvalue problem that will explicitly force compactness of the resulting solution operator $\Psi$, in which we seek $\eta\in\mathbb{C}$ and a nonzero triple $(\mathbf{w},\mathbf{v},p)$ such that
\begin{subequations} \label{modp}
\begin{align}
\curl\curl\mathbf{w} - k^2\epsilon\mathbf{w} &= \mathbf{0} \text{ in } B, \label{modp1} \\
\curl\gamma^{-1}\curl\mathbf{v} - k^2\eta\mathbf{v} + k^2\nabla p &= \mathbf{0} \text{ in } B, \label{modp2} \\
\div\mathbf{v} &= 0 \text{ in } B, \label{modp3} \\
\un\cdot\mathbf{v} &= 0 \text{ on } \partial B, \label{modp4} \\
\un\times(\mathbf{w} - \mathbf{v}) &= \mathbf{0} \text{ on } \partial B, \label{modp5} \\
\un\times(\curl\mathbf{w} - \gamma^{-1}\curl\mathbf{v}) &= \mathbf{0} \text{ on } \partial B. \label{modp6}
\end{align}
\end{subequations}

Whereas this goal was accomplished in \cite{camano_lackner_monk} by essentially removing the problematic branch of eigenvalues corresponding to case (b), the new problem \eqref{modp} only modifies this branch of eigenvalues. Solving the problem using the separation of variables approach in the Appendix shows that the branch corresponding to case (a) is unchanged, but the branch corresponding to case (b) is dramatically different. In particular, we no longer observe a sequence of eigenvalues converging to a finite point. In fact, the smallest eigenvalue for case (b) is approximately $\eta = 18.317$.

%A similar approach using separation of variables that we will describe in the Appendix leads us to the set of eigenvalues that is shown in part in Figure \ref{fig_sov3}. We see that the branch corresponding to case (a) is unchanged, but the branch corresponding to case (b) is dramatically different. In particular, we no longer observe a sequence of eigenvalues converging to a finite point. While not shown in the figure, the smallest eigenvalue for case (b) is approximately $\eta = 18.317$.

%\begin{figure}[h]
%\begin{center}
%\includegraphics[scale=0.5]{newsov3.png}
%\caption{The first few eigenvalues of \eqref{modp1}--\eqref{modp6} computed using separation variables. The roots of the resulting modified determinant functions are marked by a red $+$ symbol and a blue $\times$ symbol, respectively. When compared to Figure \ref{fig_sov1}, we see that the sequence of eigenvalues converging to a finite point is no longer present. Eigenvalues for case (b) are also present but do not appear in this figure, with the smallest being at $\eta = 18.317$.}
%\label{fig_sov3}
%\end{center}
%\end{figure}

The outline of this paper is as follows. In Section \ref{sec_aux} we introduce the physical scattering problem of interest and the auxiliary problem that we will use in order to generate the eigenvalue problem \eqref{modp}, and we establish that the auxiliary problem is well-posed. The goal of Section \ref{sec_mitp} is to prove a solvability result for a nonhomogeneous version of \eqref{modp} that will allow us to study the properties of the eigenvalues. We begin this investigation in Section \ref{sec_props} by showing that the eigenvalues are discrete without finite accumulation point, and we establish that eigenvalues exist whenever $\epsilon$ is real-valued. In Section \ref{sec_determine} we show that the eigenvalues may be determined from measured scattering data using the linear sampling method. Section \ref{sec_numerics} is devoted to the presentation of a simple numerical example for scattering by a ball in order to illustrate the method. We conclude with a discussion of some open questions and potential avenues of research in Section \ref{sec_conclusion}, followed by a short appendix that provides some details regarding the separation of variables procedure mentioned above.

\section{The physical and auxiliary scattering problems} \label{sec_aux}

Given an incident electric field $\mathbf{E}^i$ which satisfies the free-space Maxwell's equations in $\mathbb{R}^3$, we seek a scattered field $\mathbf{E}^s\in\Hcurlloc{\mathbb{R}^3\setminus\overline{D}}$ and a total field $\mathbf{E}\in\Hcurl{D}$ which satisfy the standard Maxwell system
\begin{subequations} \label{sc}
\begin{align}
\curl\curl \mathbf{E}^s - k^2 \mathbf{E}^s &= \mathbf{0} \text{ in } \mathbb{R}^3\setminus\overline{D}, \label{sc1} \\
\curl\curl \mathbf{E} - k^2\epsilon \mathbf{E} &= \mathbf{0} \text{ in } D, \label{sc2} \\
\un\times\mathbf{E} - \un\times\mathbf{E}^s &= \un\times \mathbf{E}^i \text{ on } \partial D, \label{sc3} \\
\un\times\curl \mathbf{E} - \un\times\curl \mathbf{E}^s &= \un\times\curl \mathbf{E}^i \text{ on } \partial D, \label{sc4} \\
\mathclap{\lim_{r\to\infty} \left(\curl \mathbf{E}^s\times\mathbf{x} - ikr\mathbf{E}^s\right) = 0,} \label{sc5}
\end{align}
\end{subequations}
where $\epsilon$ is the relative electric permittivity of the medium, $k>0$ is the wave number, $\overline{D}$ is the support of the contrast $\epsilon - 1$, and $\un$ is the outward unit normal vector of the boundary $\partial D$. We assume that $\epsilon = 1$ outside of a sufficiently large ball centered at the origin, which implies that $D$ is bounded. We also assume that $D$ is a Lipschitz domain with connected complement and that $\epsilon$ satisfies $\Re(\epsilon)\ge\epsilon_*>0$ and $\Im(\epsilon)\ge0$ a.e. in $D$. In order to permit the application of the unique continuation principle, we assume that $\epsilon|_D$ lies in the space
\begin{equation*}
W_{\Sigma}^{1,\infty}(D) := \{\mu\in L^\infty(D) \mid \nabla(\mu|_{\Omega_i})\in\mathbf{L}^\infty(\Omega_i), \; i = 1,2,\dots,M\},
\end{equation*}
where $\{\Omega_i\}_{i=1}^M$ is a partition of $D$. We refer to \eqref{sc} as the physical scattering problem, and under the assumptions given above this problem is well-posed for any incident field $\mathbf{E}^i$ (cf. \cite{colton_kress}). \par

We now introduce an auxiliary scattering problem that will allow us to generate an eigenvalue problem that depends on the permittivity $\epsilon$, but we remark that the auxiliary problem itself is independent of this parameter. We choose a bounded Lipschitz domain $B\subset\mathbb{R}^3$ that contains $D$ (e.g. a ball or $B=D$), and we seek $\mathbf{E}_0^s\in \Hcurlloc{\mathbb{R}^3\setminus\overline{B}}$, $\mathbf{E}_0\in \Hcurl{B}$, and $P\in H_*^1(B) := H^1(B)/\mathbb{C}$ which satisfy
\begin{subequations} \label{aux}
\begin{align}
\curl\curl \mathbf{E}_0^s - k^2 \mathbf{E}_0^s &= \mathbf{0} \text{ in } \mathbb{R}^3\setminus\overline{B}, \label{aux1} \\
\curl\gamma^{-1}\curl \mathbf{E}_0 - k^2\eta \mathbf{E}_0 + k^2\nabla P &= \mathbf{0} \text{ in } B, \label{aux2} \\
\div \mathbf{E}_0 &= 0 \text{ in } B, \label{aux3} \\
\un\cdot \mathbf{E}_0 &= 0 \text{ on } \partial B, \label{aux4} \\
\un\times\mathbf{E}_0 - \un\times\mathbf{E}_0^s &= \un\times \mathbf{E}^i \text{ on } \partial B, \label{aux5} \\
\un\times\gamma^{-1}\curl \mathbf{E}_0 - \un\times\curl \mathbf{E}_0^s &= \un\times\curl \mathbf{E}^i \text{ on } \partial B, \label{aux6} \\
\mathclap{\lim_{r\to\infty} \left(\curl \mathbf{E}_0^s\times\mathbf{x} - ikr\mathbf{E}_0^s\right) = 0,} \label{aux7}
\end{align}
\end{subequations}
where $\gamma>0$ is a fixed constant and $\eta\in\mathbb{C}$ is the parameter of interest that will later serve as an eigenvalue. We would like to establish solvability of this nonstandard problem, and we begin by showing uniqueness of solutions whenever $\Im(\eta)\ge0$.

\begin{theorem} \label{theorem_uniqueness}

If $\Im(\eta)\ge0$, then there exists at most one solution of \eqref{aux} for a given incident field $\mathbf{E}^i$.

\end{theorem}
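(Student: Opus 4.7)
The plan is to proceed by a Rellich-type uniqueness argument adapted to the presence of the pressure $P$ and the divergence-free constraint. Let $(\mathbf{E}_0^s,\mathbf{E}_0,P)$ denote the difference of two putative solutions, so that this triple satisfies the homogeneous version of \eqref{aux} (with $\mathbf{E}^i=\mathbf{0}$). I multiply \eqref{aux2} by $\overline{\mathbf{E}_0}$ and integrate over $B$; the pressure contribution
\[
k^2\int_B \nabla P \cdot \overline{\mathbf{E}_0}\,dx = -k^2\int_B P\,\overline{\div \mathbf{E}_0}\,dx + k^2\int_{\partial B} P\,\overline{\un\cdot \mathbf{E}_0}\,ds
\]
vanishes thanks to \eqref{aux3} and \eqref{aux4}, reducing the interior identity to one purely in $\curl\mathbf{E}_0$ and $\mathbf{E}_0$. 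In parallel, I apply the standard exterior energy identity to \eqref{aux1} tested against $\overline{\mathbf{E}_0^s}$ over an annular region $B_R\setminus\overline{B}$ with $B_R\supset\overline{B}$ a large ball, and use the homogeneous transmission conditions \eqref{aux5}--\eqref{aux6} to combine the boundary integrals on $\partial B$ into a single identity.

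Taking the imaginary part and using that $\gamma>0$ is real, I obtain
\[
k^2\Im(\eta)\int_B |\mathbf{E}_0|^2\,dx = \Im\int_{\partial B_R}(\un\times\curl\mathbf{E}_0^s)\cdot\overline{\mathbf{E}_0^s}\,ds.
\]
A standard computation with the Silver-M\"uller radiation condition \eqref{aux7} shows the right-hand side equals $-k\int_{\partial B_R}|\mathbf{E}_0^s|^2\,ds+o(1)$ as $R\to\infty$, so the left-hand side (non-negative by hypothesis) coincides with the limit of a non-positive quantity; both must vanish. Rellich's lemma then yields $\mathbf{E}_0^s\equiv\mathbf{0}$ in $\mathbb{R}^3\setminus\overline{B}$. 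If $\Im(\eta)>0$, the identity forces $\mathbf{E}_0=\mathbf{0}$ in $B$ directly, whereupon \eqref{aux2} gives $\nabla P=\mathbf{0}$, so $P$ represents the trivial class in $H^1_*(B)$.

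The delicate remaining case is $\Im(\eta)=0$. From $\mathbf{E}_0^s=\mathbf{0}$ outside $B$ and the homogeneous transmission conditions I obtain $\un\times\mathbf{E}_0=0$ and $\un\times\gamma^{-1}\curl\mathbf{E}_0=0$ on $\partial B$. Taking the divergence of \eqref{aux2} gives $\Delta P=0$ in $B$, and taking $\un\cdot$ of \eqref{aux2} on $\partial B$, together with the surface identity $\un\cdot\curl\mathbf{W}=\operatorname{div}_{\partial B}(\un\times\mathbf{W})$ applied to $\mathbf{W}=\curl\mathbf{E}_0$, yields $\normal{P}=0$ on $\partial B$. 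Thus $P$ is harmonic with zero Neumann data and represents the trivial class. The equation for $\mathbf{E}_0$ then reduces to $\curl\gamma^{-1}\curl\mathbf{E}_0-k^2\eta\mathbf{E}_0=\mathbf{0}$ in $B$ with homogeneous tangential traces of both $\mathbf{E}_0$ and $\gamma^{-1}\curl\mathbf{E}_0$ on $\partial B$, so the zero-extension $\tilde{\mathbf{E}}$ of $\mathbf{E}_0$ to $\mathbb{R}^3$ lies in $\Hcurlloc{\mathbb{R}^3}$ and satisfies the same Maxwell equation globally in the distributional sense; the unique continuation principle for Maxwell's equations then delivers $\mathbf{E}_0\equiv\mathbf{0}$ in $B$.

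The main obstacle will be the case $\Im(\eta)=0$: one must first extract a Neumann boundary condition for the pressure from the homogeneous Maxwell boundary data (so that $P$ can be discarded), and then carefully justify both the extension-by-zero (verifying both tangential-trace conditions required for $\tilde{\mathbf{E}}$ and $\gamma^{-1}\curl\tilde{\mathbf{E}}$ to lie in $\Hcurlloc{\mathbb{R}^3}$) and the invocation of the unique continuation principle.
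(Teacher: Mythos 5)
Your argument is correct and follows essentially the same route as the paper: an energy identity whose imaginary part, combined with the Silver--M\"uller condition and Rellich's lemma, kills the exterior field, after which the homogeneous traces on $\partial B$ let you eliminate $P$ and then apply unique continuation to the zero extension of $\mathbf{E}_0$. The only (immaterial) difference is how $P$ is dispatched: you derive a homogeneous Neumann problem by taking the divergence and normal trace of \eqref{aux2}, whereas the paper tests \eqref{aux2} against $\nabla\overline{P}$ and uses \eqref{aux3}--\eqref{aux4}; both yield $\nabla P=\mathbf{0}$.
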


\begin{proof}

By linearity it suffices to show that the only solution of \eqref{aux} corresponding to $\mathbf{E}^i = \mathbf{0}$ is $(\mathbf{E}_0^s,\mathbf{E}_0,P) = (\mathbf{0},\mathbf{0},0)$. Indeed, we suppose that $(\mathbf{E}_0^s,\mathbf{E}_0,P)$ is a solution for $\mathbf{E}^i = \mathbf{0}$, and we extend $\mathbf{E}_0$ as $\mathbf{E}_0 := \mathbf{E}_0^s$ in $\mathbb{R}^3\setminus\overline{B}$, which lies in $\Hcurlloc{\mathbb{R}^3}$ due to the boundary condition \eqref{aux5}. Setting $\mathbf{H}_0 := \frac{1}{ik}\curl\mathbf{E}_0$ and integrating by parts against $\overline{\mathbf{E}_0}$ in \eqref{aux1}, we see that
\begin{equation*}
\int_{B_R\setminus\overline{B}} \left( \abs{\curl\mathbf{E}_0}^s - k^2\abs{\mathbf{E}_0}^2 \right) dx - ik\int_{\partial B_R} \un\times\overline{\mathbf{E}_0}\cdot\mathbf{H}_0 ds - \int_{\partial B} (\un\times\curl\mathbf{E}_0)\cdot\overline{\mathbf{E}_{0,T}} ds = 0,
\end{equation*}
where $B_R$ is a ball centered at the origin chosen such that $\overline{B}\subset B_R$. If we integrate by parts in \eqref{aux2} in a similar manner and apply the transmission conditions \eqref{aux5}--\eqref{aux6}, then we obtain
\begin{align*}
\int_{B_R\setminus\overline{B}} \left( \abs{\curl\mathbf{E}_0}^2 - k^2\abs{\mathbf{E}_0}^2 \right) dx - ik\int_{\partial B_R} &\un\times\overline{\mathbf{E}_0}\cdot\mathbf{H}_0 ds \\
+ \int_B \left(\gamma^{-1}\abs{\curl\mathbf{E}_0}^2 - k^2\eta\abs{\mathbf{E}_0}^2 \right) &dx + k^2\int_B \nabla P\cdot\overline{\mathbf{E}_0} dx = 0.
\end{align*}
The vanishing divergence and normal component of $\mathbf{E}_0$ required by \eqref{aux3}--\eqref{aux4} imply that the last integral on the left-hand side vanishes,
%\begin{equation*}
%\int_B \nabla P\cdot\overline{\mathbf{E}_0} dx = 0,
%\end{equation*}
and by taking the imaginary part of both sides it follows that
\begin{equation*}
\Re\int_{\partial B_R} \un\times\overline{\mathbf{E}_0}\cdot\mathbf{H}_0 ds = -k\Im(\eta)\int_B \abs{\mathbf{E}_0}^2 dx \le 0.
\end{equation*}
By Rellich's lemma (cf. \cite{colton_kress}) we see that $\mathbf{E}_0 = \mathbf{0}$ in $\mathbb{R}^3\setminus\overline{B}_R$, and the unique continuation principle for Maxwell's equations implies further that $\mathbf{E}_0 = \mathbf{0}$ in $\mathbb{R}^3\setminus\overline{B}$. In particular, we observe from the transmission conditions \eqref{aux5}--\eqref{aux6} that $\un\times\mathbf{E}_0 = \un\times\gamma^{-1}\curl\mathbf{E}_0 = \mathbf{0} \text{ on } \partial B$,
%\begin{equation*}
%\un\times\mathbf{E}_0 = \un\times\gamma^{-1}\curl\mathbf{E}_0 = \mathbf{0} \text{ on } \partial B,
%\end{equation*}
and as a consequence we may integrate by parts in \eqref{aux2} against $\nabla\overline{P}$ to obtain
\begin{equation*}
k^2\int_B \abs{\nabla P}^2 dx - k^2\eta\int_B \mathbf{E}_0\cdot\nabla\overline{P} dx = 0.
\end{equation*}
From \eqref{aux3}--\eqref{aux4} we see that the second integral vanishes, which implies that $\nabla P = \mathbf{0}$ and hence $P = 0$ in $B$ since $P\in H_*^1(B)$. Finally, we see that $\mathbf{E}_0$ satisfies
\begin{equation*}
\curl{\tilde\gamma}^{-1}\curl\mathbf{E}_0 - k^2\tilde{\eta}\mathbf{E}_0 = \mathbf{0} \text{ in } \mathbb{R}^3,
\end{equation*}
where for any constant $\alpha$ we define $\tilde{\alpha} := \alpha$ in $B$ and $\tilde{\alpha} := 1$ elsewhere. Since $\mathbf{E}_0$ is identically zero in $\mathbb{R}^3\setminus\overline{B}$, the unique continuation principle implies that $\mathbf{E}_0 = \mathbf{0}$ in $B$ as well, and we conclude that $(\mathbf{E}_0^s,\mathbf{E}_0,P) = (\mathbf{0},\mathbf{0},0)$. \proofend
\end{proof}

We now aim to show that \eqref{aux} is well-posed whenever $\Im(\eta)\ge0$, and in particular we show that this problem is of Fredholm type, i.e. existence of solutions follows from uniqueness. In the following remark we introduce two modifications of the problem that will allow us to derive an equivalent variational formulation of \eqref{aux}. 

\begin{remark} \label{remark_mod1}
\textnormal{
First, we choose $\boldsymbol{\varphi}\in \Hcurlloc{\mathbb{R}^3\setminus\overline{B}}$ to be the unique radiating solution of
\begin{align*}
\curl\curl\boldsymbol{\varphi} - k^2\boldsymbol{\varphi} &= \mathbf{0} \text{ in } \mathbb{R}^3\setminus\overline{B}, \\
\un\times\boldsymbol{\varphi} &= \un\times\mathbf{E}^i \text{ on } \partial B.
\end{align*}
By the well-posedness of this standard problem there exists a constant $C_K$ independent of $\mathbf{E}^i$ such that
\begin{equation*}
\norm{\boldsymbol{\varphi}}_{\Hcurl{K})} \le C_K\norm{\un\times\mathbf{E}^i}_{\mathbf{H}^{-1/2}(\Div,\partial B)},
\end{equation*}
where $K$ is any bounded subset of $\mathbb{R}^3\setminus\overline{B}$. We write $\mathbf{u} := \mathbf{E}_0^s + \boldsymbol{\varphi}$, and we observe from the boundary conditions \eqref{aux5}--\eqref{aux6} that
\begin{align*}
\un\times\mathbf{E}_0 - \un\times\mathbf{u} &= \mathbf{0} \text{ on } \partial B, \\
\un\times\gamma^{-1}\curl \mathbf{E}_0 - \un\times\curl \mathbf{u} &= \un\times\curl \mathbf{E}^i - \un\times\curl\boldsymbol{\varphi} \text{ on } \partial B.
\end{align*}
Second, we choose $\zeta\in H_*^1(B)$ to be the unique solution of
\begin{align*}
\Delta\zeta &= 0 \text{ in } B, \\
\normal{\zeta} &= k^{-2}\nabla_{\partial B}\cdot\left( \un\times\curl \mathbf{E}^i - \un\times\curl\boldsymbol{\varphi} \right) \text{ on } \partial B,
\end{align*}
where $\nabla_{\partial B}\cdot$ denotes the surface divergence on $\partial B$, and we remark that in a similar manner there exists a constant $C>0$ independent of $\mathbf{E}^i$ and $\boldsymbol{\varphi}$ such that
\begin{equation*}
\norm{\nabla\zeta}_B \le C\norm{\nabla_{\partial B}\cdot\left( \un\times\curl \mathbf{E}^i - \un\times\curl\boldsymbol{\varphi} \right)}_{H^{-1/2}(\partial B)}.
\end{equation*}
We write $p := P-\zeta$ in $B$, and we see from \eqref{aux2} that
\begin{equation*}
\curl\gamma^{-1}\curl\mathbf{E}_0 - k^2\eta\mathbf{E}_0 + k^2\nabla p = -k^2\nabla\zeta \text{ in } B.
\end{equation*}
The reason for this modification is to guarantee that certain relationships between the solution fields are homogeneous in our upcoming analysis.}
\end{remark}

In addition to the modifications from Remark \ref{remark_mod1}, we formulate \eqref{aux} on a bounded domain $B_R$ using the electric-to-magnetic Calder{\'o}n operator $G_e:\mathbf{H}^{-1/2}(\Div,\partial B_R)\to\mathbf{H}^{-1/2}(\Div,\partial B_R)$, which maps the tangential component of the electric field on $\partial B_R$ to the tangential component of the magnetic field on $\partial B_R$ that arises from the unique radiating solution of the free-space Maxwell's equations in the exterior domain $\mathbb{R}^3\setminus\overline{B_R}$. We refer to \cite{monk} for details on this operator. This operator serves to replace the equation and radiation condition for the scattered electric field in the exterior domain $\mathbb{R}^3\setminus\overline{B_R}$ with a boundary condition on $\partial B_R$. \par

If we write $\mathbf{v} := \mathbf{E}_0$ for convenience, then an equivalent formulation of \eqref{aux} is to seek $\mathbf{u}\in\Hcurl{B_R\setminus\overline{B}}$, $\mathbf{v}\in\Hcurl{B}$, and $p\in H_*^1(B)$ which satisfy
\begin{subequations} \label{auxb}
\begin{align}
\curl\curl \mathbf{u} - k^2 \mathbf{u} &= \mathbf{0} \text{ in } B_R\setminus\overline{B}, \label{auxb1} \\
\curl\gamma^{-1}\curl \mathbf{v} - k^2\eta \mathbf{v} + k^2\nabla p &= -k^2\nabla\zeta \text{ in } B, \label{auxb2} \\
\div \mathbf{v} &= 0 \text{ in } B, \label{auxb3} \\
\un\cdot \mathbf{v} &= 0 \text{ on } \partial B, \label{auxb4} \\
\un\times\mathbf{v} - \un\times\mathbf{u} &= \mathbf{0} \text{ on } \partial B, \label{auxb5} \\
\un\times\gamma^{-1}\curl \mathbf{v} - \un\times\curl \mathbf{u} &= \mathbf{h} \text{ on } \partial B, \label{auxb6} \\
\un\times\curl\mathbf{u} &= ik G_e(\un\times\mathbf{u}) \text{ on } \partial B_R, \label{auxb7}
\end{align}
\end{subequations}
where $\mathbf{h} := \un\times\curl\mathbf{E}^i - \un\times\curl\boldsymbol{\varphi}\in \mathbf{H}^{-1/2}(\Div,\partial B)$. In order to study an equivalent variational formulation of \eqref{auxb} we introduce the space
\begin{equation*}
\boldsymbol{\mathcal{X}} := \left\{(\mathbf{u},\mathbf{v},p)\in\Hcurl{B_R\setminus\overline{B}}\times\Hcurl{B}\times H_*^1(B) \;\middle|\; \un\times\mathbf{u}-\un\times\mathbf{v} = \mathbf{0} \text{ on } \partial B\right\},
\end{equation*}
equipped with the usual inner product $(\cdot,\cdot)_{\boldsymbol{\mathcal{X}}}$ and induced norm $\norm{\cdot}_{\boldsymbol{\mathcal{X}}}$ inherited from the component spaces. If $(\mathbf{u},\mathbf{v},p)$ satisfies \eqref{auxb} and we integrate by parts in \eqref{auxb1}--\eqref{auxb3} against the test function components $(\mathbf{u}',\mathbf{v}',p')\in \boldsymbol{\mathcal{X}}$, then we obtain
\begin{align*}
(\curl\mathbf{u},\curl\mathbf{u}')_{B_R\setminus\overline{B}} - k^2(\mathbf{u},\mathbf{u}')_{B_R\setminus\overline{B}} + \inner{\un\times\curl\mathbf{u}}{\mathbf{u}_T'}_{\partial B_R} - \inner{\un\times\curl\mathbf{u}}{\mathbf{u}_T'}_{\partial B} &= 0, \\
(\curl\mathbf{v},\curl\mathbf{v}')_B - k^2\eta(\mathbf{v},\mathbf{v}')_B + \inner{\un\times\gamma^{-1}\curl\mathbf{v}}{\mathbf{v}_T'}_{\partial B} + k^2(\nabla p,\mathbf{v}')_B &= -k^2(\nabla\zeta,\mathbf{v}')_B, \\
(\mathbf{v},\nabla p')_B &= 0,
\end{align*}
where for a Lipschitz domain $\mathcal{O}\subseteq\mathbb{R}^3$ with boundary $\partial\mathcal{O}$ we denote by $(\cdot,\cdot)_\mathcal{O}$ the inner product on $\mathbf{L}^2(\mathcal{O})$ and by $\inner{\cdot}{\cdot}_{\partial\mathcal{O}}$ the duality pairing of $\mathbf{H}^{-1/2}(\Div,\partial\mathcal{O})$ and $\mathbf{H}^{-1/2}(\Curl,\partial\mathcal{O})$ (with the second argument conjugated). In some instances we will also use $\inner{\cdot}{\cdot}_{\partial\mathcal{O}}$ to denote the duality pairing of $H^{-1/2}(\partial\mathcal{O})$ and $H^{1/2}(\partial\mathcal{O})$ for scalar functions, and we will sometimes use the shorthand $\norm{\cdot}_B$ to represent the norms $\norm{\cdot}_{L^2(B)}$ and $\norm{\cdot}_{\mathbf{L}^2(B)}$. The combination of these equations along with the boundary conditions yields a variational problem in which we seek $(\mathbf{u},\mathbf{v},p)\in \boldsymbol{\mathcal{X}}$ satisfying
\begin{equation}
a((\mathbf{u},\mathbf{v},p),(\mathbf{u}',\mathbf{v}',p')) = \ell(\mathbf{u}',\mathbf{v}',p') \quad\forall(\mathbf{u}',\mathbf{v}',p')\in \boldsymbol{\mathcal{X}}, \label{varprob_aux}
\end{equation}
where the bounded sesquilinear form $a(\cdot,\cdot)$ is defined by
\begin{align*}
a((\mathbf{u},\mathbf{v},p),(\mathbf{u}',\mathbf{v}',p')) &:= (\curl\mathbf{u},\curl\mathbf{u}')_{B_R\setminus\overline{B}} + \gamma^{-1}(\curl\mathbf{v},\curl\mathbf{v}')_B - k^2(\mathbf{u},\mathbf{u}')_{B_R\setminus\overline{B}} \\
&\quad\quad\quad- k^2\eta(\mathbf{v},\mathbf{v}')_B + k^2(\nabla p,\mathbf{v}')_B + k^2(\mathbf{v},\nabla p')_B \\
&\quad\quad\quad+ ik\inner{G_e(\un\times\mathbf{u})}{\mathbf{u}_T'}_{\partial B_R} \quad\forall(\mathbf{u},\mathbf{v},p),(\mathbf{u}',\mathbf{v}',p')\in \boldsymbol{\mathcal{X}},
\end{align*}
and the bounded antilinear functional $\ell$ is defined by
\begin{equation*}
\ell(\mathbf{u}',\mathbf{v}',p') := -k^2(\nabla\zeta,\mathbf{v}')_B - \inner{\mathbf{h}}{\mathbf{v}_T'}_{\partial B} \quad\forall(\mathbf{u}',\mathbf{v}',p')\in \boldsymbol{\mathcal{X}}.
\end{equation*}
We now investigate the properties of solutions of \eqref{varprob_aux}, and we begin by introducing the space
\begin{equation*}
S := \left\{ (\varphi,\psi,q)\in H^1(B_R\setminus\overline{B})\times H^1(B)\times H_*^1(B) \;\middle|\; \begin{array}{c} \varphi = \psi \text{ on } \partial B \\ \inner{\varphi}{1}_{\partial B} = \inner{\psi}{1}_{\partial B} = 0 \end{array} \right\}.
\end{equation*}
For any $(\varphi,\psi,q)\in S$ it follows that $\un\times\nabla\varphi - \un\times\nabla\psi = 0$ on $\partial B$ since $\varphi = \psi$ on $\partial B$, and as a result we have $(\nabla\varphi,\nabla\psi,q)\in \boldsymbol{\mathcal{X}}$. With $(\mathbf{u}',\mathbf{v}',p) = (\nabla\varphi,\nabla\psi,q)$ we see that any solution $(\mathbf{u},\mathbf{v},p)$ of \eqref{varprob_aux} must satisfy
\begin{align}
\begin{split}
k^2(\mathbf{u},&\nabla\varphi)_{B_R\setminus\overline{B}} + k^2\eta(\mathbf{v},\nabla\psi)_B - k^2(\nabla p,\nabla\psi)_B - k^2(\mathbf{v},\nabla q)_B \\
&\quad- ik\inner{G_e(\un\times\mathbf{u})}{\nabla_{\partial B_R}\varphi}_{\partial B_R} = k^2(\nabla\zeta,\nabla\psi)_B + \inner{\mathbf{h}}{\nabla_{\partial B}\psi}_{\partial B}. \label{varS}
\end{split}
\end{align}
We first observe that by choosing $\varphi = 0$ and $\psi = 0$ we have
\begin{equation*}
(\mathbf{v},\nabla q)_B = 0 \quad\forall q\in H_*^1(B),
\end{equation*}
which reflects the conditions \eqref{auxb3}--\eqref{auxb4} in the reformulated auxiliary problem. By choosing $\varphi\in H_0^1(B_R\setminus\overline{B})$, $\psi = 0$, and $q=0$ we have
\begin{equation*}
(\mathbf{u},\nabla\varphi)_{B_R\setminus\overline{B}} = 0 \quad\forall\varphi\in H_0^1(B_R\setminus\overline{B}),
\end{equation*}
and by instead choosing $\varphi = 0$, $\psi\in H_0^1(B)$, and $q=0$ we observe that
\begin{equation*}
(\nabla p,\nabla\psi)_B = -(\nabla\zeta,\nabla\psi)_B = 0 \quad\forall\psi\in H_0^1(B)
\end{equation*}
since $\Delta\zeta = 0$ in $B$ by construction. Thus, it follows that
\begin{equation*}
\div\mathbf{u} = 0 \text{ in } B_R\setminus\overline{B} \text{ and } \Delta p = 0 \text{ in } B,
\end{equation*}
and applying the divergence theorem yields
\begin{align*}
(\mathbf{u},\nabla\varphi)_{B_R\setminus\overline{B}} &= \inner{\un\cdot\mathbf{u}}{\varphi}_{\partial B_R} - \inner{\un\cdot\mathbf{u}}{\varphi}_{\partial B}, \\
(\nabla p,\nabla\psi)_B &= \inner{\normal{p}}{\psi}_{\partial B}, \\
(\nabla\zeta,\nabla\psi)_B &= k^{-2}\inner{\nabla_{\partial B}\cdot\mathbf{h}}{\psi}_{\partial B},
\end{align*}
where the last equation follows from the definition of $\mathbf{h}$ and the construction of $\zeta$. If we substitute these equations into \eqref{varS} and use the definition of the surface divergence (cf. \cite{monk}), then we see that
\begin{equation*}
\inner{\un\cdot\mathbf{u} - \frac{1}{ik}\nabla_{\partial B_R}\cdot G_e(\un\times\mathbf{u})}{\varphi}_{\partial B_R} - \inner{\un\cdot\mathbf{u} + \normal{p}}{\psi}_{\partial B} = 0.
\end{equation*}
Choosing $\psi = 0$ in $B$ yields
\begin{equation}
\un\cdot\mathbf{u} - \frac{1}{ik}\nabla_{\partial B_R}\cdot G_e(\un\times\mathbf{u}) = 0 \text{ on } \partial B_R, \label{eq_X01}
\end{equation}
and choosing $\varphi$ such that $\varphi = 0$ near $\partial B_R$ yields
\begin{equation}
\un\cdot\mathbf{u} + \normal{p} = 0 \text{ on } \partial B. \label{eq_X02}
\end{equation}
We introduced $\zeta$ in Remark \ref{remark_mod1} in order to ensure that \eqref{eq_X01} and \eqref{eq_X02} are homogeneous and hence may be used to define a subspace of $\boldsymbol{\mathcal{X}}$. In particular, we define the solution space
%\begin{align*}
%\boldsymbol{\mathcal{X}}_0 := \{ (\mathbf{u},\mathbf{v},p)\in \boldsymbol{\mathcal{X}} \;|\;& \div\mathbf{u} = 0 \text{ in } B_R\setminus\overline{B}, \,\div\mathbf{v} = 0 \text{ in } B, \,\Delta p = 0 \text{ in } B, \\
%&\un\cdot\mathbf{u} - \frac{1}{ik}\nabla_{\partial B_R}\cdot G_e(\un\times\mathbf{u}) = 0 \text{ on } \partial B_R, \\
%&\un\cdot\mathbf{u}+\normal{p} = 0 \text{ on } \partial B, \,\un\cdot\mathbf{v} = 0 \text{ on } \partial B\},
%\end{align*}
\begin{equation*}
\boldsymbol{\mathcal{X}}_0 := \left\{ (\mathbf{u},\mathbf{v},p)\in \boldsymbol{\mathcal{X}} \;\middle|\; \begin{array}{c} \div\mathbf{u} = 0 \text{ in } B_R\setminus\overline{B}, \,\div\mathbf{v} = 0 \text{ in } B, \,\Delta p = 0 \text{ in } B, \\
\un\cdot\mathbf{u} - \frac{1}{ik}\nabla_{\partial B_R}\cdot G_e(\un\times\mathbf{u}) = 0 \text{ on } \partial B_R, \\
\un\cdot\mathbf{u}+\normal{p} = 0 \text{ on } \partial B, \,\un\cdot\mathbf{v} = 0 \text{ on } \partial B \end{array} \right\},
\end{equation*}
equipped with the same inner product and norm as $\boldsymbol{\mathcal{X}}$, and we observe that \eqref{varprob_aux} may be equivalently posed with $\boldsymbol{\mathcal{X}}_0$ in place of $\boldsymbol{\mathcal{X}}$. A necessary ingredient to establishing the Fredholm property of \eqref{varprob_aux} is a compactness result for the space $\boldsymbol{\mathcal{X}}_0$, and our main tool is the following theorem (cf. \cite[Theorem 2]{costabel} and \cite[Theorem 3.47]{monk}).

\begin{theorem} \label{theorem_costabel}

Let $\Omega$ be a bounded Lipschitz domain in $\mathbb{R}^3$, and let $\mathbf{u}\in\Hcurl{\Omega}\cap\Hdiv{\Omega}$. Then $\un\times\mathbf{u}\in\mathbf{L}_t^2(\partial \Omega)$ if and only if $\un\cdot\mathbf{u}\in L^2(\partial \Omega)$, and in either case we have $\mathbf{u}\in\mathbf{H}^{1/2}(\Omega)$ and the estimates
\begin{subequations}
\begin{align}
\norm{\un\times\mathbf{u}}_{\mathbf{L}_t^2(\partial\Omega)} &\le C\Bigr( \norm{\mathbf{u}}_{\mathbf{L}^2(\Omega)} + \norm{\curl\mathbf{u}}_{\mathbf{L}^2(\Omega)} + \norm{\div\mathbf{u}}_{L^2(\Omega)} + \norm{\un\cdot\mathbf{u}}_{L^2(\partial\Omega)} \Bigr), \label{costabel1} \\
\norm{\un\cdot\mathbf{u}}_{L^2(\partial\Omega)} &\le C\Bigr( \norm{\mathbf{u}}_{\mathbf{L}^2(\Omega)} + \norm{\curl\mathbf{u}}_{\mathbf{L}^2(\Omega)} + \norm{\div\mathbf{u}}_{L^2(\Omega)} + \norm{\un\times\mathbf{u}}_{\mathbf{L}_t^2(\partial\Omega)} \Bigr), \label{costabel2} \\
\norm{\mathbf{u}}_{\mathbf{H}^{1/2}(\Omega)} &\le C\Bigr( \norm{\mathbf{u}}_{\mathbf{L}^2(\Omega)} + \norm{\curl\mathbf{u}}_{\mathbf{L}^2(\Omega)} + \norm{\div\mathbf{u}}_{L^2(\Omega)} + \norm{\un\times\mathbf{u}}_{\mathbf{L}_t^2(\partial\Omega)} \Bigr). \label{costabel3}
\end{align}
\end{subequations}

\end{theorem}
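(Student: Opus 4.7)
Theorem \ref{theorem_costabel} is a classical regularity result due to Costabel, and I sketch the standard line of argument. The plan is to localize to the boundary via a partition of unity, reducing to a Lipschitz half-space, and then exploit a Helmholtz-type decomposition to convert the problem into two elliptic boundary value problems whose regularity on Lipschitz domains is well understood.

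First I would cover $\overline{\Omega}$ by finitely many open sets, each either contained in $\Omega$ or covering a piece of $\partial\Omega$ that is a Lipschitz graph in some rotated coordinate frame. On the interior pieces the identity
\[
\int_{\mathbb{R}^3}\bigl(|\curl\mathbf{u}|^2 + |\div\mathbf{u}|^2\bigr)\,dx = \int_{\mathbb{R}^3}|\nabla\mathbf{u}|^2\,dx
\]
for compactly supported smooth $\mathbf{u}$ yields $\mathbf{H}^1$ regularity of the localized field after cutoff. Near the boundary, flatten $\partial\Omega$ by a bi-Lipschitz map and perform the Helmholtz decomposition $\mathbf{u} = \nabla\phi + \mathbf{w}$, where $\phi$ solves the Neumann problem $-\Delta\phi = -\div\mathbf{u}$ in $\Omega$ with $\normal{\phi} = \un\cdot\mathbf{u}$ on $\partial\Omega$. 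The Jerison--Kenig $H^{3/2}$ regularity theorem for the Neumann problem on Lipschitz domains with $L^2$ data yields $\nabla\phi\in\mathbf{H}^{1/2}(\Omega)$. The residual field $\mathbf{w}$ is divergence-free with $\curl\mathbf{w} = \curl\mathbf{u}\in\mathbf{L}^2(\Omega)$ and $\un\cdot\mathbf{w} = 0$ on $\partial\Omega$; writing $\mathbf{w} = \curl\boldsymbol{\psi}$ for a suitably normalized vector potential $\boldsymbol{\psi}$ and applying the same sharp regularity theory again produces $\mathbf{w}\in\mathbf{H}^{1/2}(\Omega)$. Combining the two pieces and tracking constants gives \eqref{costabel3}, and the symmetric argument starting from the tangential trace hypothesis yields the analogous conclusion.

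The equivalence of the two trace conditions and the corresponding estimates \eqref{costabel1}--\eqref{costabel2} would then follow from local Fourier analysis in the flattened half-space: once $\mathbf{u}\in\mathbf{H}^{1/2}$ is secured, the surface identity $\nabla_{\partial\Omega}\cdot(\un\times\mathbf{u}) = -\un\cdot\curl\mathbf{u}$ together with the symmetric half-space structure forces the $L^2$ tangential and normal trace regularities to be equivalent, with quantitative control by the $\Hcurl{\Omega}$ and $\Hdiv{\Omega}$ norms. The main obstacle is precisely the Lipschitz nature of $\partial\Omega$: the classical smooth-boundary route via standard elliptic regularity no longer works, and one must rely on the sharp Jerison--Kenig results for the Neumann and div--curl systems on Lipschitz domains, where the half-derivative gain in \eqref{costabel3} is optimal and reflects the possibility of boundary singularities.
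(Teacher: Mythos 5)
The paper does not prove Theorem \ref{theorem_costabel} at all: it is imported as a known result from \cite[Theorem 2]{costabel} and \cite[Theorem 3.47]{monk}, so there is no in-paper argument to compare yours against. Your sketch does follow the standard line of Costabel's original proof --- reduce to scalar elliptic problems via a Helmholtz-type splitting and invoke the sharp $H^{3/2}$ regularity of the Neumann problem on a Lipschitz domain with $L^2$ boundary data (Ne\v{c}as, Jerison--Kenig) --- and your remark that the half-derivative gain in \eqref{costabel3} is optimal for Lipschitz boundaries is correct. Two steps, however, are asserted rather than argued. First, obtaining the equivalence of the trace conditions and the estimates \eqref{costabel1}--\eqref{costabel2} by ``local Fourier analysis in the flattened half-space'' does not work as stated: a bi-Lipschitz change of variables destroys the constant-coefficient structure that Fourier analysis requires, and the actual mechanism on Lipschitz graphs is the Rellich integration-by-parts identity. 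The cleaner route (and essentially Costabel's) is to subtract Newtonian-potential corrections for $\curl\mathbf{u}$ and $\div\mathbf{u}$ so that the remainder is the gradient of a harmonic function, for which the $L^2$-equivalence of tangential and normal boundary gradients is exactly the classical Rellich-identity statement underlying the Ne\v{c}as/Jerison--Kenig theory you already cite. Second, the divergence-free remainder $\mathbf{w}=\curl\boldsymbol{\psi}$ is not handled by ``applying the same sharp regularity theory again'' without further work: you must fix a gauge for $\boldsymbol{\psi}$ and check that the boundary data of the resulting problem is controlled by $\norm{\un\times\mathbf{u}}_{\mathbf{L}_t^2(\partial\Omega)}$ or $\norm{\un\cdot\mathbf{u}}_{L^2(\partial\Omega)}$, which is where the two cases of the theorem actually diverge. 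Neither issue is fatal for a sketch of a cited classical theorem, but as written the proposal is an outline with the two hardest steps left unjustified, not a proof.
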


\begin{theorem} \label{theorem_X0}

The space $\boldsymbol{\mathcal{X}}_0$ is compactly embedded into $\boldsymbol{\mathcal{L}} := \mathbf{L}^2(B_R\setminus\overline{B})\times\mathbf{L}^2(B)\times H_*^1(B)$.

\end{theorem}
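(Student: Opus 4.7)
My plan is to take an arbitrary bounded sequence $\{(\mathbf{u}_n,\mathbf{v}_n,p_n)\}\subset\boldsymbol{\mathcal{X}}_0$ and extract a subsequence that converges strongly in $\boldsymbol{\mathcal{L}}$, handling each component in turn using Theorem~\ref{theorem_costabel} together with either Rellich--Kondrachov or elliptic regularity for the Neumann problem.

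The $\mathbf{v}_n$ component is the straightforward one: since $\mathbf{v}_n\in\Hcurl{B}$, $\div\mathbf{v}_n=0$, and $\un\cdot\mathbf{v}_n=0\in L^2(\partial B)$, Theorem~\ref{theorem_costabel} supplies uniform bounds of $\mathbf{v}_n$ in $\mathbf{H}^{1/2}(B)$ and of $\un\times\mathbf{v}_n$ in $\mathbf{L}_t^2(\partial B)$, and the compact embedding $\mathbf{H}^{1/2}(B)\hookrightarrow\mathbf{L}^2(B)$ yields a strongly $\mathbf{L}^2(B)$-convergent subsequence. The tangential continuity hard-wired into $\boldsymbol{\mathcal{X}}$ then transports the $\mathbf{L}_t^2$-bound to $\un\times\mathbf{u}_n$ on $\partial B$.

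The crux, and the step I expect to be the main obstacle, is obtaining an analogous $\mathbf{L}_t^2$-bound on the outer sphere $\partial B_R$, since the Calderón condition built into $\boldsymbol{\mathcal{X}}_0$ only controls $\un\times\mathbf{u}_n|_{\partial B_R}$ in $\mathbf{H}^{-1/2}(\Div,\partial B_R)$, which is inadequate for a direct application of Theorem~\ref{theorem_costabel} on $B_R\setminus\overline{B}$. My plan is to extend each $\mathbf{u}_n$ to $\hat{\mathbf{u}}_n\in\Hcurlloc{\mathbb{R}^3\setminus\overline{B}}$ by attaching, on $\mathbb{R}^3\setminus\overline{B_R}$, the unique radiating solution of $\curl\curl\mathbf{w}-k^2\mathbf{w}=\mathbf{0}$ with tangential trace $\un\times\mathbf{u}_n$ on $\partial B_R$. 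A short calculation, based on the identity $\un\cdot\curl\curl\mathbf{w}=\nabla_{\partial B_R}\cdot(\un\times\curl\mathbf{w})$, identifies the defining condition $\un\cdot\mathbf{u}_n-(ik)^{-1}\nabla_{\partial B_R}\cdot G_e(\un\times\mathbf{u}_n)=0$ as precisely the matching of normal traces at $\partial B_R$, so $\hat{\mathbf{u}}_n$ is simultaneously in $\Hcurlloc{\mathbb{R}^3\setminus\overline{B}}$ and divergence-free. Well-posedness of the exterior problem bounds $\hat{\mathbf{u}}_n$ uniformly in $\mathbf{L}^2(B_{R'}\setminus\overline{B_R})$ for any $R'>R$, and the real-analyticity of divergence-free solutions of the constant-coefficient Helmholtz system in that region upgrades this to uniform $C^\infty$-bounds on every compact subset of $\mathbb{R}^3\setminus\overline{B_R}$. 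Fixing such an $R'$, the trace $\un\times\hat{\mathbf{u}}_n|_{\partial B_{R'}}$ is then uniformly bounded in $\mathbf{L}_t^2(\partial B_{R'})$, and Theorem~\ref{theorem_costabel} applied to $\hat{\mathbf{u}}_n$ on the larger shell $B_{R'}\setminus\overline{B}$ (with $\mathbf{L}_t^2$-tangential traces on both boundary components) delivers a uniform $\mathbf{H}^{1/2}$-bound. Restricting to $B_R\setminus\overline{B}$ and invoking Rellich--Kondrachov produces the desired strongly $\mathbf{L}^2$-convergent subsequence of $\mathbf{u}_n$.

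Finally, for $p_n$, the step above together with the equivalence of $\un\times\mathbf{u}\in\mathbf{L}_t^2$ and $\un\cdot\mathbf{u}\in L^2$ in Theorem~\ref{theorem_costabel} gives a uniform bound for $\un\cdot\mathbf{u}_n$ in $L^2(\partial B)$. Since each $p_n$ is harmonic in $B$ with Neumann data $-\un\cdot\mathbf{u}_n$ and zero mean, standard elliptic regularity for the Neumann problem on the Lipschitz domain $B$ yields a uniform $H^{3/2}(B)$-bound, whence compact embedding into $H_*^1(B)$ completes the extraction. All remaining steps are bookkeeping; the genuinely delicate point is the exterior-extension construction for $\mathbf{u}_n$, which converts the weak $\mathbf{H}^{-1/2}(\Div)$ information encoded by $G_e$ into the $\mathbf{L}_t^2$ control that Theorem~\ref{theorem_costabel} requires.
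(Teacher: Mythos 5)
Your proposal is correct and follows essentially the same route as the paper: the same component-wise treatment via Theorem~\ref{theorem_costabel}, and in particular the same key construction of extending $\mathbf{u}_n$ by the exterior radiating solution and recognizing the Calder\'on condition in $\boldsymbol{\mathcal{X}}_0$ as the matching of normal traces across $\partial B_R$, so that the extension is divergence-free. The only (harmless) variations are that the paper controls the outer tangential trace with a cutoff function rather than interior regularity on $\partial B_{R'}$, and handles $p_n$ by applying Theorem~\ref{theorem_costabel} to $\nabla p_n$ rather than citing $H^{3/2}$ Neumann regularity directly; also note the surface-divergence identity carries a minus sign in the paper's convention, which does not affect the conclusion.
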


\begin{proof}

Let $\{(\mathbf{u}_j,\mathbf{v}_j,p_j)\}_{j\in\mathbb{N}}$ be a bounded sequence in $\boldsymbol{\mathcal{X}}_0$. We see in particular that $\{\mathbf{v}_j\}$ is a bounded sequence in $\Hcurl{B}$ with $\div\mathbf{v}_j = 0\in L^2(B)$ and $\un\cdot\mathbf{v}_j = 0 \in L^2(\partial B)$. Theorem \ref{theorem_costabel} then implies that $\{\mathbf{v}_j\}$ is a bounded sequence in $\mathbf{H}^{1/2}(B)$, and from \eqref{costabel1} it follows that $\{\un\times\mathbf{v}_j\}$ is bounded in $\mathbf{L}_t^2(\partial B)$. The compact embedding of $\mathbf{H}^{1/2}(B)$ into $\mathbf{L}^2(B)$ implies that we may extract a subsequence of $\{\mathbf{v}_j\}$ that converges in $\mathbf{L}^2(B)$, and we pass to the corresponding subsequence of $\{(\mathbf{u}_j,\mathbf{v}_j,p_j)\}_{j\in\mathbb{N}}$. We now turn our attention to the sequence $\{\mathbf{u}_j\}$, and we begin by choosing $\tilde{\mathbf{u}}_j\in\Hcurlloc{\mathbb{R}^3\setminus\overline{B_R}}$ to be the unique radiating solution of
\begin{align*}
\curl\curl\tilde{\mathbf{u}}_j - k^2\tilde{\mathbf{u}}_j &= \mathbf{0} \text{ in } \mathbb{R}^3\setminus\overline{B_R}, \\
\un\times\tilde{\mathbf{u}}_j &= \un\times\mathbf{u}_j \text{ on } \partial B_R.
\end{align*}
We observe that the extension
\begin{equation*}
\mathbf{u}_j^e := \left\{\begin{array}{ll} \mathbf{u}_j \text{ in } B_R\setminus\overline{B}, \\
                                           \tilde{\mathbf{u}}_j \text{ in } \mathbb{R}^3\setminus\overline{B_R}, \end{array} \right.
\end{equation*}
lies in $\Hcurlloc{\mathbb{R}^3\setminus\overline{B}}$ since the tangential component is continuous across $\partial B_R$. We may also apply the relation (cf. \cite[Remark 3.32]{monk})
\begin{equation*}
\nabla_{\partial B_R}\cdot(\un\times\boldsymbol{\xi}) = -\un\cdot(\curl\boldsymbol{\xi})|_{\partial B_R} \quad\forall\boldsymbol{\xi}\in\Hcurlloc{\mathbb{R}^3\setminus\overline{B_R}}
\end{equation*}
along with the definition of $\tilde{\mathbf{u}}_j$ in order to obtain
%\begin{align*}
%\un\cdot\mathbf{u}_j &= \frac{1}{ik}\nabla_{\partial B_R}\cdot G_e(\un\times\mathbf{u}_j) \\
%&= \frac{1}{ik}\nabla_{\partial B_R}\cdot \left( \frac{1}{ik}\un\times\curl\tilde{\mathbf{u}}_j \right) \\
%&= \frac{1}{k^2}\un\cdot\curl\curl\tilde{\mathbf{u}}_j \\
%&= \un\cdot\tilde{\mathbf{u}}_j.
%\end{align*}
\begin{equation*}
\un\cdot\mathbf{u}_j = \frac{1}{ik}\nabla_{\partial B_R}\cdot G_e(\un\times\mathbf{u}_j) = \frac{1}{ik}\nabla_{\partial B_R}\cdot \left( \frac{1}{ik}\un\times\curl\tilde{\mathbf{u}}_j \right) = \frac{1}{k^2}\un\cdot\curl\curl\tilde{\mathbf{u}}_j = \un\cdot\tilde{\mathbf{u}}_j.
\end{equation*}
It follows that $\mathbf{u}_j^e\in\Hdiv{\mathbb{R}^3\setminus\overline{B}}$ with $\div\mathbf{u}_j^e = 0$ in $\mathbb{R}^3\setminus\overline{B}$. We consider a smooth cutoff function $\chi\in C_0^\infty(\mathbb{R}^3)$ such that $\chi = 1$ in $\overline{B_R}$, and we let $B_{\tilde{R}}$, $\tilde{R}>R$, be a ball centered at the origin that strictly contains the support of $\chi$. We see that each term $\chi\mathbf{u}_j^e$ satisfies
\begin{align*}
\un\times(\chi\mathbf{u}_j^e) &= \mathbf{0} \text{ on } \partial B_{\tilde{R}}, \\
\un\times(\chi\mathbf{u}_j^e) &= \un\times\mathbf{v}_j \text{ on } \partial B,
\end{align*}
which implies that the sequence $\{(\un\times(\chi\mathbf{u}_j^e))|_{\partial(B_{\tilde{R}}\setminus\overline{B})}\}$ is bounded in $\mathbf{L}_t^2(\partial(B_{\tilde{R}}\setminus\overline{B}))$. Another application of Theorem \ref{theorem_costabel} implies that $\{\mathbf{u}_j\}$ is bounded in $\mathbf{H}^{1/2}(B_R\setminus\overline{B})$ and allows us to extract a convergent subsequence of $\{\mathbf{u}_j\}$ in $\mathbf{L}^2(B_R\setminus\overline{B})$. We again pass to the corresponding subsequence of $\{(\mathbf{u}_j,\mathbf{v}_j,p_j)\}_{j\in\mathbb{N}}$. The estimate \eqref{costabel2} also implies that the sequence $\{\un\cdot\mathbf{u}_j\}$ is bounded in $L^2(\partial B)$, and from the definition of $\boldsymbol{\mathcal{X}}_0$ we see that $\left\{\un\cdot\nabla p_j\right\}$ is bounded in $L^2(\partial B)$ as well. Since $\curl\nabla p_j = 0$ and $\div\nabla p = \Delta p = 0$ in $B$, a final application of Theorem \ref{theorem_costabel} implies that $\{\nabla p_j\}$ is a bounded sequence in $\mathbf{H}^{1/2}(B)$, and we again extract a subsequence convergent in $\mathbf{L}^2(B)$. A simple argument shows that the limit of this sequence may be written as the gradient of a scalar potential, which implies that the corresponding subsequence of $\{p_j\}$ converges in $H_*^1(B)$. We conclude that there exists a subsequence of $\{(\mathbf{u}_j,\mathbf{v}_j,p_j)\}_{j\in\mathbb{N}}$ which converges in $\boldsymbol{\mathcal{L}}$. \proofend

\end{proof}

\begin{remark} \label{remark_Ge}

\textnormal{In order to decompose the sesquilinear form $a(\cdot,\cdot)$ into coercive and compact parts, we first require such a decomposition of the Calderon operator $G_e$. From \cite[Section 10.3.2]{monk} there exist operators $G_e^{(1)}, G_e^{(2)}:\mathbf{H}^{-1/2}(\Div,\partial B_R)\to\mathbf{H}^{-1/2}(\Div,\partial B_R)$ which satisfy}
\begin{enumerate}[label=(\roman*)]

\item \textnormal{$G_e = G_e^{(1)} + G_e^{(2)}$;}

\item \textnormal{$G_e^{(1)}\circ\gamma_t\circ P_1:\boldsymbol{\mathcal{X}}_0\to\mathbf{H}^{-1/2}(\Div,\partial B_R)$ is compact, where $\gamma_t:\Hcurl{B_R\setminus\overline{B}}\to\mathbf{H}^{-1/2}(\Div,\partial B_R)$ is the tangential trace operator $\mathbf{u}\mapsto\un\times\mathbf{u}|_{\partial B_R}$ and $P_1:\boldsymbol{\mathcal{X}}_0\to\Hcurl{B_R\setminus\overline{B}}$ is the projection operator $(\mathbf{u},\mathbf{v},p)\mapsto\mathbf{u}$;}

\item \textnormal{$ikG_e^{(2)}$ is nonnegative.}

\end{enumerate}

\end{remark}

We now define the operators $\mathbb{A},\mathbb{B}:\boldsymbol{\mathcal{X}}_0\to \boldsymbol{\mathcal{X}}_0$ by means of the Riesz representation theorem such that
\begin{align*}
(\mathbb{A}(\mathbf{u},\mathbf{v},p),(\mathbf{u}',\mathbf{v}',p'))_{\boldsymbol{\mathcal{X}}_0} &= (\curl\mathbf{u},\curl\mathbf{u}')_{B_R\setminus\overline{B}} + \gamma^{-1}(\curl\mathbf{v},\curl\mathbf{v}')_B \\
&\quad\quad\quad+ k^2(\mathbf{u},\mathbf{u}')_{B_R\setminus\overline{B}} + k^2(\mathbf{v},\mathbf{v}')_B + (\nabla p,\nabla p')_B \\
&\quad\quad\quad+ ik\inner{G_e^{(2)}(\un\times\mathbf{u})}{\mathbf{u}_T'}_{\partial B_R}, \\
(\mathbb{B}(\mathbf{u},\mathbf{v},p),(\mathbf{u}',\mathbf{v}',p'))_{\boldsymbol{\mathcal{X}}_0} &= -2k^2(\mathbf{u},\mathbf{u}')_{B_R\setminus\overline{B}} - k^2(1+\eta)(\mathbf{v},\mathbf{v}')_B - (\nabla p,\nabla p')_B \\
&\quad\quad\quad+ ik\inner{G_e^{(1)}(\un\times\mathbf{u})}{\mathbf{u}_T'}_{\partial B_R},
\end{align*}
for all $(\mathbf{u},\mathbf{v},p),(\mathbf{u}',\mathbf{v}',p')\in \boldsymbol{\mathcal{X}}_0$. We see that
\begin{equation*}
((\mathbb{A}+\mathbb{B})(\mathbf{u},\mathbf{v},p),(\mathbf{u}',\mathbf{v}',p'))_{\boldsymbol{\mathcal{X}}_0} = a((\mathbf{u},\mathbf{v},p),(\mathbf{u}',\mathbf{v}',p')) \quad\quad\quad\quad\forall(\mathbf{u},\mathbf{v},p),(\mathbf{u}',\mathbf{v}',p')\in \boldsymbol{\mathcal{X}}_0,
\end{equation*}
and as a result we need only study the operators $\mathbb{A}$ and $\mathbb{B}$. It is clear from the definition of $\mathbb{A}$ and Remark \ref{remark_Ge} that
\begin{equation*}
\abs{(\mathbb{A}(\mathbf{u},\mathbf{v},p),(\mathbf{u}',\mathbf{v}',p'))_{\boldsymbol{\mathcal{X}}_0}} \ge C\left( \norm{\mathbf{u}}_{\Hcurl{B_R\setminus\overline{B}}}^2 + \norm{\mathbf{v}}_{\Hcurl{B}}^2 + \norm{\nabla p}_B^2 \right),
\end{equation*}
and it follows from the Lax-Milgram lemma that $\mathbb{A}:\boldsymbol{\mathcal{X}}_0\to \boldsymbol{\mathcal{X}}_0$ is invertible with bounded inverse. The compactness of $\mathbb{B}:\boldsymbol{\mathcal{X}}_0\to \boldsymbol{\mathcal{X}}_0$ follows easily from Theorem \ref{theorem_X0} and Remark \ref{remark_Ge}, and consequently we see that the operator $\mathbb{A} + \mathbb{B}$ is a Fredholm operator of index zero. Therefore, we conclude that the auxiliary problem \eqref{aux} is of Fredholm type, and since we already showed uniqueness of solutions in Theorem \ref{theorem_uniqueness} we obtain the following result. We remark that the proof of well-posedness may also be approached using the limiting absorption principle as in \cite{nguyen2018}.

\begin{theorem} \label{theorem_aux}

If $\Im(\eta)\ge0$, then there exists a unique solution of \eqref{aux} with the estimate
\begin{equation*}
\norm{\mathbf{E}_0^s}_{\Hcurl{B_R\setminus\overline{B}}} + \norm{\mathbf{E}_0}_{\Hcurl{B}} + \norm{\nabla P}_B \le C_R\left( \norm{\un\times\mathbf{E}^i}_{\mathbf{H}^{-1/2}(\Div,\partial B)} + \norm{\un\times\curl\mathbf{E}^i}_{\mathbf{H}^{-1/2}(\Div,\partial B)} \right),
\end{equation*}
where the constant $C_R$ is independent of $\mathbf{E}^i$ but depends on the domain $B_R$.

\end{theorem}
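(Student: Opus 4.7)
The plan is to assemble the pieces already established in the discussion preceding the theorem. The variational problem \eqref{varprob_aux} on $\boldsymbol{\mathcal{X}}_0$ can be written in operator form as $(\mathbb{A}+\mathbb{B})(\mathbf{u},\mathbf{v},p) = F$, where $F\in\boldsymbol{\mathcal{X}}_0$ is the Riesz representative of the antilinear functional $\ell$. Since $\mathbb{A}$ is invertible by Lax-Milgram and $\mathbb{B}$ is compact as a consequence of Theorem \ref{theorem_X0} together with Remark \ref{remark_Ge}(ii), the operator $\mathbb{A}+\mathbb{B} = \mathbb{A}(I + \mathbb{A}^{-1}\mathbb{B})$ is Fredholm of index zero. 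Hence surjectivity will follow from injectivity, and a bounded inverse will then follow from the open mapping theorem.

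First I would establish injectivity of $\mathbb{A}+\mathbb{B}$ by reducing it to the already-proved Theorem \ref{theorem_uniqueness}. Given $(\mathbf{u},\mathbf{v},p)\in\boldsymbol{\mathcal{X}}_0$ in the kernel, I would reverse the reformulation that produced \eqref{auxb}: extend $\mathbf{u}$ into $\mathbb{R}^3\setminus\overline{B_R}$ via the unique radiating solution of free-space Maxwell matching $\un\times\mathbf{u}$ on $\partial B_R$ (so that the Calder\'on boundary condition \eqref{auxb7} is precisely the statement that the extension lies in $\Hcurlloc{\mathbb{R}^3\setminus\overline{B}}$ and satisfies the Silver-M\"uller condition), and then set $\mathbf{E}_0^s := \mathbf{u} - \boldsymbol{\varphi}$, $\mathbf{E}_0 := \mathbf{v}$, $P := p + \zeta$. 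A routine integration-by-parts argument on the variational equations, exactly analogous to the derivation that led to \eqref{eq_X01}--\eqref{eq_X02}, recovers \eqref{aux1}--\eqref{aux7} with $\mathbf{E}^i = \mathbf{0}$, so Theorem \ref{theorem_uniqueness} forces the triple to vanish.

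Next I would run this equivalence in reverse to produce the existence assertion of Theorem \ref{theorem_aux}. Given arbitrary incident data $\mathbf{E}^i$, construct $\boldsymbol{\varphi}$ and $\zeta$ as in Remark \ref{remark_mod1}, which are well defined and satisfy the stated bounds. The associated functional $\ell$ is bounded on $\boldsymbol{\mathcal{X}}_0$, so by the bijectivity of $\mathbb{A}+\mathbb{B}$ there exists a unique $(\mathbf{u},\mathbf{v},p)\in\boldsymbol{\mathcal{X}}_0$ solving \eqref{varprob_aux}; defining $\mathbf{E}_0 := \mathbf{v}$, $P := p+\zeta$, and $\mathbf{E}_0^s := \mathbf{u} - \boldsymbol{\varphi}$ in $B_R\setminus\overline{B}$ extended by the radiating free-space solution in $\mathbb{R}^3\setminus\overline{B_R}$ yields a solution of \eqref{aux}. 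The norm estimate then follows by chaining bounds: the open mapping theorem gives $\norm{(\mathbf{u},\mathbf{v},p)}_{\boldsymbol{\mathcal{X}}_0}\le C\norm{\ell}$; the definition of $\ell$ bounds $\norm{\ell}$ by $\norm{\nabla\zeta}_B + \norm{\mathbf{h}}_{\mathbf{H}^{-1/2}(\Div,\partial B)}$; the bounds quoted in Remark \ref{remark_mod1} for $\boldsymbol{\varphi}$ and $\zeta$, together with the trace inequality for $\nabla_{\partial B}\cdot$, control these by $\norm{\un\times\mathbf{E}^i}_{\mathbf{H}^{-1/2}(\Div,\partial B)} + \norm{\un\times\curl\mathbf{E}^i}_{\mathbf{H}^{-1/2}(\Div,\partial B)}$; and undoing the substitutions $\mathbf{E}_0^s = \mathbf{u}-\boldsymbol{\varphi}$, $P = p+\zeta$ only introduces additional $\boldsymbol{\varphi}$ and $\zeta$ norms that are already controlled.

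The main obstacle is the rigorous verification of the equivalence between the variational formulation on $\boldsymbol{\mathcal{X}}_0$ and the original auxiliary problem \eqref{aux}, specifically reversing the integration-by-parts calculation that generated the defining conditions of $\boldsymbol{\mathcal{X}}_0$ and interpreting the Calder\'on boundary condition \eqref{auxb7} as the imprint on $\partial B_R$ of a genuine radiating exterior solution. Once this correspondence is in hand, the theorem itself is a packaging of facts already proved: Fredholmness from the $\mathbb{A}+\mathbb{B}$ decomposition, injectivity from Theorem \ref{theorem_uniqueness}, and the estimate from the bounded inverse combined with the a priori bounds in Remark \ref{remark_mod1}.
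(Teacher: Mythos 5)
Your proposal follows the paper's own argument: the paper likewise concludes Theorem \ref{theorem_aux} from the decomposition $\mathbb{A}+\mathbb{B}$ (with $\mathbb{A}$ invertible by Lax--Milgram and $\mathbb{B}$ compact via Theorem \ref{theorem_X0} and Remark \ref{remark_Ge}), yielding a Fredholm operator of index zero whose injectivity is supplied by Theorem \ref{theorem_uniqueness}, with the estimate obtained by chaining the bounds from Remark \ref{remark_mod1}. Your additional attention to reversing the reformulation (recovering \eqref{aux} from the variational problem on $\boldsymbol{\mathcal{X}}_0$) is a detail the paper leaves implicit, but it does not change the route.
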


Now that we have established that the auxiliary problem is well-posed, we turn our attention to the physical and auxiliary data that will be collected in order to determine the eigenvalues corresponding to a medium. If we choose a plane wave incident field with direction $\mathbf{d}\in\mathbb{S}^2$ and polarization $\mathbf{p}\in\mathbb{R}^3\setminus\{\mathbf{0}\}$ given by
\begin{equation*}
\mathbf{E}^i(\mathbf{x},\mathbf{d};\mathbf{p}) := \frac{i}{k}\curl\curl\mathbf{p} e^{ik\mathbf{x}\cdot\mathbf{d}},
\end{equation*}
then the scattered field for both the physical and auxiliary scattering problems (\eqref{sc} and \eqref{aux}, respectively) has the asymptotic behavior
\begin{align*}
\mathbf{E}^s(\mathbf{x}) &= \frac{e^{ik\abs{\mathbf{x}}}}{\abs{\mathbf{x}}}\left[ \mathbf{E}_\infty(\hat{\mathbf{x}},\mathbf{d};\mathbf{p}) + \mathcal{O}\left(\frac{1}{\abs{\mathbf{x}}}\right) \right], \quad\abs{\mathbf{x}}\to\infty, \\
\mathbf{E}_0^s(\mathbf{x}) &= \frac{e^{ik\abs{\mathbf{x}}}}{\abs{\mathbf{x}}}\left[ \mathbf{E}_{0,\infty}(\hat{\mathbf{x}},\mathbf{d};\mathbf{p}) + \mathcal{O}\left(\frac{1}{\abs{\mathbf{x}}}\right) \right], \quad\abs{\mathbf{x}}\to\infty,
\end{align*}
where for a nonzero $\mathbf{x}\in\mathbb{R}^3$ we define $\hat{\mathbf{x}} := \frac{\mathbf{x}}{\abs{\mathbf{x}}}$. The amplitudes $\mathbf{E}_\infty(\hat{\mathbf{x}},\mathbf{d};\mathbf{p})$ and $\mathbf{E}_{0,\infty}(\hat{\mathbf{x}},\mathbf{d};\mathbf{p})$ are the electric far field patterns of the physical and auxiliary problems, respectively, and they serve as the data for our problem; the physical far field pattern is collected from the system under investigation, and the auxiliary far field pattern is computed for various values of the parameter $\eta$. We remark that from standard arguments (cf. \cite{monk}) it follows that these electric far field patterns satisfy the reciprocity relations
\begin{align*}
\mathbf{q}\cdot\mathbf{E}_\infty(\hat{\mathbf{x}},\mathbf{d};\mathbf{p}) &= \mathbf{p}\cdot\mathbf{E}_\infty(-\mathbf{d},-\hat{\mathbf{x}};\mathbf{q}), \\
\mathbf{q}\cdot\mathbf{E}_{0,\infty}(\hat{\mathbf{x}},\mathbf{d};\mathbf{p}) &= \mathbf{p}\cdot\mathbf{E}_{0,\infty}(-\mathbf{d},-\hat{\mathbf{x}};\mathbf{q}),
\end{align*}
for all $\hat{\mathbf{x}},\mathbf{d}\in\mathbb{S}^2$ and $\mathbf{p},\mathbf{q}\in\mathbb{R}^3$.

We now introduce the electric far field operator $\mathbf{F}:\mathbf{L}_t^2(\mathbb{S}^2)\to \mathbf{L}_t^2(\mathbb{S}^2)$ defined by
\begin{equation*}
(\mathbf{F}\mathbf{g})(\hat{\mathbf{x}}) := \int_{\mathbb{S}^2} \mathbf{E}_\infty(\hat{\mathbf{x}},\mathbf{d};\mathbf{g}(\mathbf{d})) \, ds(\mathbf{d}), \; \hat{\mathbf{x}}\in\mathbb{S}^2,
\end{equation*}
and we introduce the auxiliary far field operator $\mathbf{F}_0:\mathbf{L}_t^2(\mathbb{S}^2)\to \mathbf{L}_t^2(\mathbb{S}^2)$ defined by
\begin{equation*}
(\mathbf{F}_0\mathbf{g})(\hat{\mathbf{x}}) := \int_{\mathbb{S}^2} \mathbf{E}_{0,\infty}(\hat{\mathbf{x}},\mathbf{d};\mathbf{g}(\mathbf{d})) \, ds(\mathbf{d}), \; \hat{\mathbf{x}}\in\mathbb{S}^2.
\end{equation*}
With these definitions in hand, we further define the modified far field operator $\boldsymbol{\mathcal{F}}:\mathbf{L}_t^2(\mathbb{S}^2)\to \mathbf{L}_t^2(\mathbb{S}^2)$ by $\boldsymbol{\mathcal{F}} := \mathbf{F} - \mathbf{F}_0$, which may be written explicitly as
\begin{equation*}
(\boldsymbol{\mathcal{F}}\mathbf{g})(\hat{\mathbf{x}}) := \int_{\mathbb{S}^2} \Bigr[ \mathbf{E}_\infty(\hat{\mathbf{x}},\mathbf{d};\mathbf{g}(\mathbf{d})) - \mathbf{E}_{0,\infty}(\hat{\mathbf{x}},\mathbf{d};\mathbf{g}(\mathbf{d})) \Bigr] \, ds(\mathbf{d}), \; \hat{\mathbf{x}}\in\mathbb{S}^2.
\end{equation*}
The modified far field operator serves to compare the electric far field patterns of the physical and auxiliary problems, and in order to generate an eigenvalue problem we characterize when this operator is injective. We state the following theorem, which follows in a similar manner to \cite[Theorem 4.14]{cakoni_colton_monk} and \cite[Section 4]{camano_lackner_monk}.

\begin{theorem} \label{mffo}

The modified far field operator $\boldsymbol{\mathcal{F}}$ is injective with dense range if and only if there does not exist a nontrivial solution $(\mathbf{w},\mathbf{v},p)$ of the modified interior transmission problem
\begin{subequations} \label{mit}
\begin{align}
\curl\curl\mathbf{w} - k^2\epsilon\mathbf{w} &= \mathbf{0} \text{ in } B, \label{mit1} \\
\curl\gamma^{-1}\curl\mathbf{v} - k^2\eta\mathbf{v} + k^2\nabla p &= \mathbf{0} \text{ in } B, \label{mit2} \\
\div\mathbf{v} &= 0 \text{ in } B, \label{mit3} \\
\un\cdot\mathbf{v} &= 0 \text{ on } \partial B, \label{mit4} \\
\un\times(\mathbf{w}-\mathbf{v}) &= \mathbf{0} \text{ on } \partial B, \label{mit5} \\
\un\times\left(\curl\mathbf{w} - \gamma^{-1}\curl\mathbf{v}\right) &= \mathbf{0} \text{ on } \partial B, \label{mit6}
\end{align}
\end{subequations}
for which $\mathbf{v}$ and $p$ are of the form
\begin{equation*} \mathbf{v}(\mathbf{x}) = \int_{\mathbb{S}^2} \mathbf{E}_0(\mathbf{x},\mathbf{d};\mathbf{g}(\mathbf{d})) \, ds(\mathbf{d}), \quad p(\mathbf{x}) = \int_{\mathbb{S}^2} P(\mathbf{x},\mathbf{d};\mathbf{g}(\mathbf{d})) \, ds(\mathbf{d}), \;\mathbf{x}\in B, \end{equation*}
where $\mathbf{E}_0(\cdot,\mathbf{d};\mathbf{p})$ and $P(\cdot,\mathbf{d};\mathbf{p})$ satisfy \eqref{aux} with a plane wave incident field $\mathbf{E}^i(\cdot,\mathbf{d};\mathbf{p})$.

\end{theorem}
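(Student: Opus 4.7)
The plan is to follow the standard Herglotz--wave characterization template used in \cite[Theorem 4.14]{cakoni_colton_monk} and \cite[Section 4]{camano_lackner_monk}, adapted to the nonstandard auxiliary problem \eqref{aux} via the well-posedness result Theorem \ref{theorem_aux}. The reciprocity relations stated immediately after the definitions of $\mathbf{F}$ and $\mathbf{F}_0$ imply, by a routine Fubini computation, the identity $\boldsymbol{\mathcal{F}}^* = R\,\boldsymbol{\mathcal{F}}\,R$ with $(R\mathbf{g})(\mathbf{d}) := \mathbf{g}(-\mathbf{d})$. Since $R$ is an isometric involution, $\boldsymbol{\mathcal{F}}$ is injective if and only if its range is dense, so it suffices to characterize $\ker\boldsymbol{\mathcal{F}}$ in terms of nontrivial solutions of \eqref{mit} of the specified form.

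Given $\mathbf{g}\in\mathbf{L}_t^2(\mathbb{S}^2)$, I would form the electric Herglotz wave
$$
\mathbf{E}_g^i(\mathbf{x}) := \int_{\mathbb{S}^2} \mathbf{E}^i(\mathbf{x},\mathbf{d};\mathbf{g}(\mathbf{d}))\, ds(\mathbf{d}),
$$
and observe that by superposition $\boldsymbol{\mathcal{F}}\mathbf{g}$ is the electric far field pattern of the difference $\mathbf{E}^s_g - \mathbf{E}^s_{0,g}$, where $\mathbf{E}^s_g$ denotes the physical scattered field from \eqref{sc} with incident field $\mathbf{E}_g^i$ and $(\mathbf{E}^s_{0,g},\mathbf{E}_{0,g},P_g)$ denotes the auxiliary solution of \eqref{aux} with the same incident field. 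Assuming $\boldsymbol{\mathcal{F}}\mathbf{g}=0$, Rellich's lemma together with the unique continuation argument used in Theorem \ref{theorem_uniqueness} forces $\mathbf{E}^s_g \equiv \mathbf{E}^s_{0,g}$ on $\mathbb{R}^3\setminus\overline{B}$, and taking tangential traces from the exterior side of $\partial B$ yields
$$
\un\times\mathbf{E}^s_g = \un\times\mathbf{E}^s_{0,g}, \qquad \un\times\curl\mathbf{E}^s_g = \un\times\curl\mathbf{E}^s_{0,g} \quad\text{on } \partial B.
$$
Setting $\mathbf{w} := \mathbf{E}^s_g + \mathbf{E}_g^i$ on $B$, $\mathbf{v}:=\mathbf{E}_{0,g}$, and $p:=P_g$, the equation \eqref{mit1} follows from $\epsilon = 1$ on $B\setminus\overline{D}$ together with the transmission conditions in \eqref{sc}; \eqref{mit2}--\eqref{mit4} are inherited directly from \eqref{aux2}--\eqref{aux4}; and \eqref{mit5}--\eqref{mit6} drop out from combining the displayed trace equalities with \eqref{aux5}--\eqref{aux6}. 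By construction $(\mathbf{v},p)$ has the Herglotz form required by the theorem.

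The converse runs the construction in reverse. Given a nontrivial triple $(\mathbf{w},\mathbf{v},p)$ solving \eqref{mit} with $(\mathbf{v},p)$ generated by a density $\mathbf{g}$, I would form the physical and auxiliary scattered fields $\mathbf{E}^s_g, \mathbf{E}^s_{0,g}$ associated with $\mathbf{E}_g^i$ and check that $\mathbf{Z}:=\mathbf{E}^s_g-\mathbf{E}^s_{0,g}$ satisfies $\un\times\mathbf{Z} = \un\times\curl\mathbf{Z} = \mathbf{0}$ on $\partial B$, by combining \eqref{mit5}--\eqref{mit6} (applied to $\mathbf{w} = \mathbf{E}^s_g + \mathbf{E}_g^i$) with \eqref{aux5}--\eqref{aux6}. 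Uniqueness for the exterior Maxwell problem with Silver--M\"uller radiation then forces $\mathbf{Z}\equiv\mathbf{0}$ in $\mathbb{R}^3\setminus\overline{B}$, so $\boldsymbol{\mathcal{F}}\mathbf{g}=0$. The density $\mathbf{g}$ is necessarily nonzero: otherwise $\mathbf{E}_g^i=\mathbf{0}$ would give $\mathbf{v}=p=0$ by Theorem \ref{theorem_aux}, and then \eqref{mit5}--\eqref{mit6} would leave $\mathbf{w}$ with zero Cauchy data on $\partial B$, forcing $\mathbf{w}\equiv\mathbf{0}$ by unique continuation and contradicting nontriviality of the triple.

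The main technical obstacle is the bookkeeping on $\partial B$: because the physical contrast may be supported in a strictly smaller subdomain $D\subset B$, one must carefully justify that $\mathbf{E}^s_g + \mathbf{E}_g^i$ is the correct extension of the physical total field across $\partial D$ that converts \eqref{sc2} into \eqref{mit1} on all of $B$, and one must simultaneously match two independent sets of transmission conditions on $\partial B$ across the difference $\mathbf{Z}$. The reciprocity step reducing dense range to injectivity, the Rellich/unique-continuation step, and the exterior uniqueness used in the converse are standard.
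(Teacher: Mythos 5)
Your template is the right one: the paper states this theorem without a written proof, deferring to the arguments of \cite[Theorem 4.14]{cakoni_colton_monk} and \cite[Section 4]{camano_lackner_monk}, and your reciprocity--Rellich scheme is exactly that argument. One small correction before the main point: the adjoint identity produced by the stated reciprocity relations is $\boldsymbol{\mathcal{F}}^*\mathbf{h} = \overline{R\,\boldsymbol{\mathcal{F}}\,(R\overline{\mathbf{h}})}$, not $\boldsymbol{\mathcal{F}}^* = R\boldsymbol{\mathcal{F}}R$; the conjugations are forced by the sesquilinearity of the $\mathbf{L}_t^2(\mathbb{S}^2)$ inner product. Since $\mathbf{h}\mapsto R\overline{\mathbf{h}}$ is still a bijection, the equivalence of injectivity and dense range survives, but the identity as you wrote it is false.

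The substantive gap is in your converse. You are handed a nontrivial triple $(\mathbf{w},\mathbf{v},p)$ solving \eqref{mit} in which only $(\mathbf{v},p)$ is tied to the density $\mathbf{g}$ through the auxiliary problem; the component $\mathbf{w}$ is merely \emph{some} solution of \eqref{mit1} coupled to $\mathbf{v}$ by \eqref{mit5}--\eqref{mit6}. You cannot ``apply \eqref{mit5}--\eqref{mit6} to $\mathbf{w}=\mathbf{E}_g^s+\mathbf{E}_g^i$,'' because the identification of $\mathbf{w}$ with the physical total field for the incident field $\mathbf{E}_g^i$ is precisely what must be established --- as written, the step assumes its conclusion. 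The standard repair is to build the scattered field out of $\mathbf{w}$: set $\tilde{\mathbf{E}}^s := \mathbf{w}-\mathbf{E}_g^i$ in $B\setminus\overline{D}$ and $\tilde{\mathbf{E}}^s := \mathbf{E}_{0,g}^s$ in $\mathbb{R}^3\setminus\overline{B}$. Combining \eqref{mit5}--\eqref{mit6} with \eqref{aux5}--\eqref{aux6} shows that the tangential traces of $\tilde{\mathbf{E}}^s$ and of $\curl\tilde{\mathbf{E}}^s$ match across $\partial B$, so $\tilde{\mathbf{E}}^s\in\Hcurlloc{\mathbb{R}^3\setminus\overline{D}}$ and the pair $(\tilde{\mathbf{E}}^s,\mathbf{w}|_D)$ satisfies \eqref{sc} with incident field $\mathbf{E}_g^i$; uniqueness of the physical scattering problem then yields $\tilde{\mathbf{E}}^s=\mathbf{E}_g^s$, hence $\mathbf{E}_g^s=\mathbf{E}_{0,g}^s$ outside $B$ and $\boldsymbol{\mathcal{F}}\mathbf{g}=\mathbf{0}$. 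Symmetrically, in the forward direction you should record why a nonzero $\mathbf{g}$ in $\ker\boldsymbol{\mathcal{F}}$ produces a \emph{nontrivial} triple (if the triple vanished, $\mathbf{E}_g^s$ would coincide with the entire field $-\mathbf{E}_g^i$ near $\partial B$, forcing both to vanish and hence $\mathbf{g}=\mathbf{0}$); you supply this argument only on the converse side. With these two repairs the proof is complete and coincides with the one the paper points to.
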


For a fixed $\gamma$, we call a value of $\eta$ for which there exists a nontrivial solution $(\mathbf{w},\mathbf{v},p)$ of \eqref{mit} a \emph{modified electromagnetic transmission eigenvalue}, and in the following sections we investigate this problem and its eigenvalues in greater detail. For future reference we define an \emph{electromagnetic Herglotz wave function} $\mathbf{v}_\mathbf{g}^i$ by
\begin{equation} \label{herglotz}
\mathbf{v}_\mathbf{g}^i(\mathbf{x}) := ik\int_{\mathbb{S}^2} e^{-ik\mathbf{x}\cdot\mathbf{d}} \mathbf{g}(\mathbf{d}) ds(\mathbf{d}), \quad \mathbf{x}\in\mathbb{R}^3.
\end{equation}
We remark that by linearity the electric far field pattern of the physical problem \eqref{sc} for $\mathbf{u}^i = \mathbf{v}_\mathbf{g}^i$ is given by $\mathbf{F}\mathbf{g}$, and the same relationship holds between the far field pattern of the auxiliary problem \eqref{aux} and the auxiliary far field operator $\mathbf{F}_0$.

\section{The modified interior transmission problem} \label{sec_mitp}

We now study a nonhomogeneous version of the modified interior transmission problem \eqref{mit}, which is to find 
$(\mathbf{w},\mathbf{v},p)\in \Hcurl{B}\times\Hcurl{B}\times H_*^1(B)$ satisfying
\begin{subequations} \label{nmit}
\begin{align}
\curl\curl\mathbf{w} - k^2\epsilon\mathbf{w} &= \mathbf{f} \text{ in } B, \label{nmit1} \\
\curl\gamma^{-1}\curl\mathbf{v} - k^2\eta\mathbf{v} + k^2\nabla p &= \mathbf{g} \text{ in } B, \label{nmit2} \\
\div\mathbf{v} &= 0 \text{ in } B, \label{nmit3} \\
\un\cdot\mathbf{v} &= 0 \text{ on } \partial B, \label{nmit4} \\
\un\times(\mathbf{w}-\mathbf{v}) &= \boldsymbol{\xi} \text{ on } \partial B, \label{nmit5} \\
\un\times\left(\curl\mathbf{w} - \gamma^{-1}\curl\mathbf{v}\right) &= \mathbf{h} \text{ on } \partial B, \label{nmit6} 
\end{align}
\end{subequations}
where $\mathbf{f}\in\mathbf{L}^2(B)$, $\mathbf{g}\in\Hdivzero{B}$, and $\boldsymbol{\xi},\mathbf{h}\in\mathbf{H}^{-1/2}(\textnormal{Div},B)$ are given. Here we have used $\Hdivzero{B}$ to denote the subspace of $\mathbf{L}^2(B)$ consisting of vector fields with vanishing divergence in $B$. Our approach will be similar to our analysis of the auxiliary problem in Section \ref{sec_aux}, and in the remark following the next assumption we make two modifications analogous to those of Remark \ref{remark_mod1}.

\begin{assumption} \label{assumption_dirichlet}
\textnormal{
We assume that $k$ is chosen such that there exists a unique
$\boldsymbol{\varphi}\in\Hcurl{B}$ satisfying
\begin{subequations} \label{dir}
\begin{align}
\curl\curl\boldsymbol{\varphi} - k^2\epsilon\boldsymbol{\varphi} &= \mathbf{f} \text{ in } B, \label{dir1} \\
\un\times\boldsymbol{\varphi} &= \boldsymbol{\xi} \text{ on } \partial B, \label{dir2}
\end{align}
\end{subequations}
with the estimate
\begin{equation} \label{phi_est}
\norm{\boldsymbol{\varphi}}_{\Hcurl{B}} \le C\left(\norm{\mathbf{f}}_{\mathbf{L}^2(B)} + \norm{\boldsymbol{\xi}}_{\mathbf{H}^{-1/2}(\textnormal{Div},\partial B)}\right)
\end{equation}
for a constant $C>0$ independent of $\mathbf{f}$ and $\boldsymbol{\xi}$.
}
\end{assumption}

\begin{remark} \label{remark_mod2}
\textnormal{
Assumption \ref{assumption_dirichlet} holds provided that $k^2$ is not an interior Maxwell eigenvalue (cf. \cite[Chapter 4]{monk}. Under this assumption we choose a lifting function $\boldsymbol{\varphi}\in\Hcurl{B}$ to be the unique solution of \eqref{dir} with the estimate \eqref{phi_est}. We may now replace $\mathbf{w}$ with $\mathbf{w} - \boldsymbol{\varphi}$ and modify $\mathbf{h}$ accordingly to obtain \eqref{nmit} with $\mathbf{f} = \mathbf{0}$ and $\boldsymbol{\xi} = \mathbf{0}$. Rather than make this modification explicit, we assume without loss of generality that $\mathbf{f} = \mathbf{0}$ and $\boldsymbol{\xi} = \mathbf{0}$. For the second modification we define $\zeta\in H_*^1(B)$ as the unique solution of
\begin{align*}
\Delta\zeta &= 0 \text{ in } B, \\
\normal{\zeta} &= k^{-2}\left(  -\un\cdot\mathbf{g} + \nabla_{\partial B}\cdot\mathbf{h} \right) \text{ on } \partial B.
\end{align*}
We replace $p$ with $p+\zeta$ in \eqref{nmit2}, which (along with our assumption that $\mathbf{f} = \mathbf{0}$ and $\boldsymbol{\xi} = \mathbf{0}$ from Remark \ref{remark_mod2}) results in the equivalent problem of finding
$(\mathbf{w},\mathbf{v},p)\in \Hcurl{B}\times\Hcurl{B}\times H_*^1(B)$ satisfying
\begin{subequations} \label{eq_nmit}
\begin{align}
\curl\curl\mathbf{w} - k^2\epsilon\mathbf{w} &= \mathbf{0} \text{ in } B, \label{eq_nmit1} \\
\curl\gamma^{-1}\curl\mathbf{v} - k^2\eta\mathbf{v} + k^2\nabla p &= \mathbf{g} + k^2\nabla\zeta \text{ in } B, \label{eq_nmit2} \\
\div\mathbf{v} &= 0 \text{ in } B, \label{eq_nmit3} \\
\un\cdot\mathbf{v} &= 0 \text{ on } \partial B. \label{eq_nmit4} \\
\un\times(\mathbf{w}-\mathbf{v}) &= \mathbf{0} \text{ on } \partial B, \label{eq_nmit5} \\
\un\times\left(\curl\mathbf{w} - \gamma^{-1}\curl\mathbf{v}\right) &= \mathbf{h} \text{ on } \partial B, \label{eq_nmit6}
\end{align}
\end{subequations}
}
\end{remark}

We are now in a position to study the nonhomogeneous problem \eqref{nmit} through the equivalent problem \eqref{eq_nmit}. We first define the space
\begin{equation*} \boldsymbol{\mathcal{H}} := \{(\mathbf{w},\mathbf{v},p)\in \Hcurl{B}\times\Hcurl{B}\times H_*^1(B) \mid \mathbf{w}-\mathbf{v}\in\Hcurltr{B}\}, \end{equation*}
equipped with the standard inner product on $\Hcurl{B}\times\Hcurl{B}\times H_*^1(B)$, and we see that an equivalent variational formulation of \eqref{eq_nmit} is to find $(\mathbf{w},\mathbf{v},p)\in\boldsymbol{\mathcal{H}}$ such that
\begin{align}
\begin{split} \label{varprob}
(\curl\mathbf{w},\curl\mathbf{w}')_B &- \gamma^{-1}(\curl\mathbf{v},\curl\mathbf{v}')_B - k^2(\epsilon\mathbf{w},\mathbf{w}')_B + k^2\eta(\mathbf{v},\mathbf{v}')_B \\
&- k^2(\nabla p,\mathbf{v}')_B - k^2(\mathbf{v},\nabla p')_B = -(\mathbf{g},\mathbf{v}')_B - \inner{\mathbf{h}}{\mathbf{w}_T'}_{\partial B} - k^2(\nabla\zeta,\mathbf{v}')_B
\end{split}
\end{align}
for all $(\mathbf{w}',\mathbf{v}',p')\in\boldsymbol{\mathcal{H}}$. If we choose $(\mathbf{w}',\mathbf{v}',p') = (\mathbf{0},\mathbf{0},q)$ in \eqref{varprob} for any $q\in H_*^1(B)$, then we see that
\begin{equation}
(\mathbf{v},\nabla q)_B = 0 \quad\forall q\in H_*^1(B), \label{var_prop1}
\end{equation}
which implies that $\div\mathbf{v} = 0$ in $B$ and $\un\cdot\mathbf{v} = 0$ on $\partial B$. By choosing $(\mathbf{w}',\mathbf{v}',p') = (\varphi,\psi,q)$ for $\varphi,\psi,q\in H_*^1(B)$ such that $\varphi-\psi\in H_0^1(B)$ (which implies that $\un\times(\nabla\varphi-\nabla\psi) = \mathbf{0}$ on $\partial B$) in \eqref{varprob} and applying the previous result we see that
\begin{equation}
k^2(\epsilon\mathbf{w},\nabla\varphi)_B + k^2(\nabla p,\nabla \psi)_B = -(\mathbf{g},\nabla\psi)_B - \inner{\mathbf{h}}{\nabla_{\partial B}\varphi}_{\partial B} - k^2(\nabla\zeta,\nabla\psi)_B. \label{var_prop2a}
\end{equation}
However, by the divergence theorem and the definition of the surface divergence operator the right-hand side of \eqref{var_prop2a} becomes
\begin{align*}
- (\mathbf{g},\nabla\psi)_B - \inner{\mathbf{h}}{\nabla_{\partial B}\varphi}_{\partial B} - k^2(\nabla\zeta,\nabla\psi)_B &= -\inner{\un\cdot\mathbf{g}}{\psi}_{\partial B} + \inner{\nabla_{\partial B}\cdot\mathbf{h}}{\varphi}_{\partial B} - k^2\inner{\normal{\zeta}}{\psi}_{\partial B} \\
&= \inner{\left(-\un\cdot\mathbf{g} + \nabla_{\partial B}\cdot\mathbf{h}\right) - k^2\normal{\zeta}}{\varphi}_{\partial B}
\\&= 0,
\end{align*}
where the final equality follows from the definition of $\zeta$ in Remark \ref{remark_mod2}. We conclude that
\begin{equation}
(\epsilon\mathbf{w},\nabla\varphi)_B + (\nabla p,\nabla \psi)_B = 0. \label{var_prop2b}
\end{equation}
This result motivates us to define the spaces
\begin{gather*}
S := \{(\varphi,\psi,q)\in(H_*^1(B))^3 \mid \varphi-\psi\in H_0^1(B)\}, \\
\boldsymbol{\mathcal{H}}_0 := \{(\mathbf{w},\mathbf{v},p)\in\boldsymbol{\mathcal{H}} \mid (\epsilon\mathbf{w},\nabla\varphi)_B + (\nabla p,\nabla \psi)_B + (\mathbf{v},\nabla q)_B = 0 \quad\forall(\varphi,\psi,q)\in S\},
\end{gather*}
where $\boldsymbol{\mathcal{H}}_0$ is equipped with the inner product on $\boldsymbol{\mathcal{H}}$. We observe that the space $\boldsymbol{\mathcal{H}}_0$ includes both conditions \eqref{var_prop1} and \eqref{var_prop2b} that must be satisfied by solutions of the modified interior transmission problem \eqref{mit}, and we will use this fact in order to establish results on the solvability of this problem. In the following lemma we clarify the condition built into the definition of the space $\boldsymbol{\mathcal{H}}_0$.

\begin{lemma} \label{lemma_H0}

A given $(\mathbf{w},\mathbf{v},p)\in\boldsymbol{\mathcal{H}}$ lies in the space $\boldsymbol{\mathcal{H}}_0$ if and only if 
\begin{subequations} \label{H0}
\begin{gather}
\div(\epsilon\mathbf{w}) = 0, \quad \Delta p = 0, \quad \div\mathbf{v} = 0 \text{ in } B, \label{H01} \\
\normal{p} + \un\cdot\epsilon\mathbf{w} = 0, \quad \un\cdot\mathbf{v} = 0 \text{ on } \partial B. \label{H02}
\end{gather}
\end{subequations}

\end{lemma}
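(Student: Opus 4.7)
The plan is to prove both implications by testing the defining identity of $\boldsymbol{\mathcal{H}}_0$ against carefully chosen triples $(\varphi,\psi,q)\in S$ and performing integration by parts. The reverse implication is straightforward: assuming \eqref{H0} holds, for any $(\varphi,\psi,q)\in S$, three applications of the divergence theorem reduce $(\epsilon\mathbf{w},\nabla\varphi)_B$, $(\nabla p,\nabla\psi)_B$, and $(\mathbf{v},\nabla q)_B$ to boundary terms, and the combined sum vanishes because the volume divergences in \eqref{H01} annihilate the interior contributions, $\un\cdot\mathbf{v}=0$ on $\partial B$, and the constraint $\varphi=\psi$ on $\partial B$ combined with $\un\cdot(\epsilon\mathbf{w})+\normal{p}=0$ eliminates the remaining boundary coupling.

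For the forward direction, I plan to argue in stages by progressively specializing the test triples. First I set $\varphi=\psi=0$ and let $q$ range over $H_*^1(B)$, obtaining $(\mathbf{v},\nabla q)_B=0$; testing initially with $q\in C_c^\infty(B)$ yields $\div\mathbf{v}=0$ in $B$ distributionally, after which integration by parts against general $q\in H^1(B)$ combined with surjectivity of the trace map $H^1(B)\to H^{1/2}(\partial B)$ gives $\un\cdot\mathbf{v}=0$ on $\partial B$. Next I set $q=0$ and take $\psi=0$ with $\varphi\in H_0^1(B)$ (which satisfies $\varphi-\psi\in H_0^1(B)$) to deduce $\div(\epsilon\mathbf{w})=0$ in $B$, and symmetrically with $\varphi=0$ and $\psi\in H_0^1(B)$ to obtain $\Delta p=0$ in $B$.

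Finally, I let $(\varphi,\psi,0)\in S$ be arbitrary. Using the three volume identities just established, integration by parts rewrites the defining relation as
\begin{equation*}
\inner{\un\cdot(\epsilon\mathbf{w})}{\varphi}_{\partial B} + \inner{\normal{p}}{\psi}_{\partial B} = 0.
\end{equation*}
Since $\varphi=\psi$ on $\partial B$, this collapses to $\inner{\un\cdot(\epsilon\mathbf{w})+\normal{p}}{\varphi}_{\partial B}=0$, and varying the common boundary trace over all of $H^{1/2}(\partial B)$ yields $\un\cdot(\epsilon\mathbf{w})+\normal{p}=0$ on $\partial B$ as an element of $H^{-1/2}(\partial B)$. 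The one bookkeeping subtlety is that, because the space is $H_*^1(B)=H^1(B)/\mathbb{C}$, the boundary duality pairings must be compatible with modding out constants; I expect this to be the only non-routine aspect of the argument, and it will be handled by noting that once $\div(\epsilon\mathbf{w})=0$ and $\Delta p=0$ are known one obtains $\inner{\un\cdot(\epsilon\mathbf{w})}{1}_{\partial B}=\inner{\normal{p}}{1}_{\partial B}=0$ automatically, so the relevant boundary functionals descend to the quotient $H^{1/2}(\partial B)/\mathbb{C}$ and the argument goes through.
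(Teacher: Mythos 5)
Your proposal is correct and follows essentially the same route as the paper's proof: specialize the test triples in $S$ to extract the volume conditions, then use the divergence theorem and the trace constraint $\varphi=\psi$ on $\partial B$ to obtain the coupled boundary condition, with the converse by reversing the integration by parts. Your explicit remark that $\inner{\un\cdot(\epsilon\mathbf{w})}{1}_{\partial B}=\inner{\normal{p}}{1}_{\partial B}=0$ so the boundary functionals descend to the quotient is a careful touch the paper leaves implicit, but it does not change the argument.
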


\begin{proof}

If $(\mathbf{w},\mathbf{v},p)\in\boldsymbol{\mathcal{H}}_0$, then we already established the conditions on $\mathbf{v}$ in \eqref{H0}. If we choose $(\varphi,\psi,\xi) = (\phi,0,0)$ for some $\phi\in H_0^1(B)$, then it follows that $\div(\epsilon\mathbf{w}) = 0$ in $B$, and similar reasoning implies that $\Delta p = 0$ in $B$. For all $(\varphi,\psi,0)\in S$ we see from the divergence theorem and the preceding results that
%\begin{align*}
%\inner{\normal{p} + \un\cdot\epsilon\mathbf{w}}{\varphi}_{\partial B} &= \inner{\un\cdot\epsilon\mathbf{w}}{\varphi}_{\partial B} + \inner{\normal{p}}{\psi}_{\partial B} \\
%&= (\epsilon\mathbf{w},\nabla\varphi)_B + (\nabla p,\nabla\psi)_B \\
%&= 0,
%\end{align*}
\begin{equation*}
\inner{\normal{p} + \un\cdot\epsilon\mathbf{w}}{\varphi}_{\partial B} = \inner{\un\cdot\epsilon\mathbf{w}}{\varphi}_{\partial B} + \inner{\normal{p}}{\psi}_{\partial B} = (\epsilon\mathbf{w},\nabla\varphi)_B + (\nabla p,\nabla\psi)_B = 0,
\end{equation*}
which provides the remaining condition in \eqref{H02}. \par

Conversely, if $(\mathbf{w},\mathbf{v},p)$ satisfies \eqref{H0}, then for all $(\varphi,\psi,\xi)\in S$ it follows from the divergence theorem that
%\begin{align*}
%(\epsilon\mathbf{w},\nabla\varphi)_B + (\nabla p,\nabla \psi)_B + (\nabla\xi,\mathbf{v})_B &= \inner{\un\cdot\epsilon\mathbf{w}}{\varphi}_{\partial B} + \inner{\normal{p}}{\psi}_{\partial B} \\
%&= \inner{\un\cdot\epsilon\mathbf{w} + \normal{p}}{\varphi}_{\partial B},
%\end{align*}
\begin{equation*}
(\epsilon\mathbf{w},\nabla\varphi)_B + (\nabla p,\nabla \psi)_B + (\nabla\xi,\mathbf{v})_B = \inner{\un\cdot\epsilon\mathbf{w}}{\varphi}_{\partial B} + \inner{\normal{p}}{\psi}_{\partial B} = \inner{\un\cdot\epsilon\mathbf{w} + \normal{p}}{\varphi}_{\partial B} = 0,
\end{equation*}
and we conclude that $(\mathbf{w},\mathbf{v},p)\in\boldsymbol{\mathcal{H}}_0$. \proofend

\end{proof}

We will return to the space $\boldsymbol{\mathcal{H}}_0$, but we must first introduce an important method for establishing solvability results for \eqref{nmit} when $\gamma\neq1$. In the variational formulation \eqref{varprob}, we see that the principal part of the associated operator is sign-indefinite, and as a result we appeal to $\mathcal{T}$-coercivity in order to restore positivity (cf. \cite{cakoni_colton_haddar,chesnel}). In particular, in the case $0<\gamma<1$ we define the operator $\mathcal{T}:\boldsymbol{\mathcal{H}}\to\boldsymbol{\mathcal{H}}$ by $\mathcal{T}(\mathbf{w},\mathbf{v},p) := (\mathbf{w}-2\mathbf{v},-\mathbf{v},p)$, and we see that $\mathcal{T}^2 = I$ and consequently that $\mathcal{T}$ is an isomorphism. We will provide proofs for this case and then simply state the appropriate choice of $\mathcal{T}$ for the case $\gamma>1$. If the sesquilinear form $a_\eta(\cdot,\cdot)$ is defined on $\boldsymbol{\mathcal{H}}\times\boldsymbol{\mathcal{H}}$ by the left-hand side of \eqref{varprob}, then we define the sesquilinear form $a_\eta^\mathcal{T}(\cdot,\cdot)$ by
\begin{equation*}
a_\eta^\mathcal{T}((\mathbf{w},\mathbf{v},p),(\mathbf{w}',\mathbf{v}',p')) := a_\eta((\mathbf{w},\mathbf{v},p),\mathcal{T}(\mathbf{w}',\mathbf{v}',p')) \quad\forall(\mathbf{w},\mathbf{v},p),(\mathbf{w}',\mathbf{v}',p')\in\boldsymbol{\mathcal{H}}.
\end{equation*}
Although we will see that we have restored positivity of the principal part of the operator with the introduction of $\mathcal{T}$, the problem still remains that the space $\boldsymbol{\mathcal{H}}$ is not compactly embedded into $\mathbf{L}^2(B)\times\mathbf{L}^2(B)\times H_*^1(B)$. However, we may obtain compactness by working in the space $\boldsymbol{\mathcal{H}}_0$, which is still a valid space in which to seek the solution since $\mathcal{T}$ is an isomorphism on $\boldsymbol{\mathcal{H}}$. Thus, we introduce the problem of finding $(\mathbf{w},\mathbf{v},p)\in\boldsymbol{\mathcal{H}}_0$ which satisfies
\begin{equation}
a_\eta^\mathcal{T}((\mathbf{w},\mathbf{v},p),(\mathbf{w}',\mathbf{v}',p')) = \ell(\mathcal{T}(\mathbf{w}',\mathbf{v}',p')) \quad\forall(\mathbf{w}',\mathbf{v}',p')\in\boldsymbol{\mathcal{H}}_0, \label{varprob_T}
\end{equation}
where $\ell$ is the antilinear functional on $\boldsymbol{\mathcal{H}}_0$ representing the right-hand sides in \eqref{eq_nmit} with the isomorphism $\mathcal{T}$ applied to the test functions. By means of the Riesz representation theorem we define the operator $A_\eta^\mathcal{T}:\boldsymbol{\mathcal{H}}_0\to\boldsymbol{\mathcal{H}}_0$ such that
\begin{equation*}
(A_\eta^\mathcal{T}(\mathbf{w},\mathbf{v},p),(\mathbf{w}',\mathbf{v}',p'))_{\boldsymbol{\mathcal{H}}} = a_\eta^\mathcal{T}((\mathbf{w},\mathbf{v},p),(\mathbf{w}',\mathbf{v}',p')) \quad\forall(\mathbf{w},\mathbf{v},p),(\mathbf{w}',\mathbf{v}',p')\in\boldsymbol{\mathcal{H}}_0.
\end{equation*}
We observe that if $(\mathbf{w},\mathbf{v},p)$ satisfies \eqref{eq_nmit} for some $\eta\in\mathbb{C}$, then $A_\eta^\mathcal{T}(\mathbf{w},\mathbf{v},p) = \mathbf{L}$, where $\mathbf{L}\in\boldsymbol{\mathcal{H}}_0$ is the Riesz representer for $\ell\circ\mathcal{T}$ in $\boldsymbol{\mathcal{H}}_0$, and as a result we study the operator $A_\eta^\mathcal{T}$. We must first prove a compactness result for the space $\boldsymbol{\mathcal{H}}_0$, and for that we will again appeal to Theorem \ref{theorem_costabel} along with another compactness result from \cite{monk}.

%\begin{lemma} \label{lemma_regularity}
%
%If $\mathbf{u}\in\mathbf{L}^2(B)$ satisfies $\curl\mathbf{u}\in\mathbf{L}^2(B)$ and $\div\mathbf{u}\in L^2(B)$, then 
%\begin{equation*}
%\un\times\mathbf{u}\in\mathbf{L}^2(\partial B) \text{ if and only if } \un\cdot\mathbf{u}\in L^2(\partial B).
%\end{equation*}
%If either of these conditions holds, then $\mathbf{u}\in\mathbf{H}^{1/2}(B)$. Moreover, we have the following norm estimates: there exist $C_1, C_2, C_3>0$ independent of $\mathbf{u}$ such that
%\begin{align*}
%\norm{\un\times\mathbf{u}}_{\mathbf{L}^2(\partial B)} &\le C_1\left( \norm{\mathbf{u}}_{\mathbf{L}^2(B)} + \norm{\div\mathbf{u}}_{L^2(B)} + \norm{\curl\mathbf{u}}_{\mathbf{L}^2(B)} + \norm{\un\cdot\mathbf{u}}_{L^2(\partial B)} \right), \\
%\norm{\un\cdot\mathbf{u}}_{L^2(\partial B)} &\le C_2\left( \norm{\mathbf{u}}_{\mathbf{L}^2(B)} + \norm{\div\mathbf{u}}_{L^2(B)} + \norm{\curl\mathbf{u}}_{\mathbf{L}^2(B)} + \norm{\un\times\mathbf{u}}_{\mathbf{L}^2(\partial B)} \right), \\
%\norm{\mathbf{u}}_{\mathbf{H}^{1/2}(B)} &\le C_3\left( \norm{\mathbf{u}}_{\mathbf{L}^2(B)} + \norm{\div\mathbf{u}}_{L^2(B)} + \norm{\curl\mathbf{u}}_{\mathbf{L}^2(B)} + \norm{\un\times\mathbf{u}}_{\mathbf{L}^2(\partial B)} \right).
%\end{align*}
%
%\end{lemma}

\begin{theorem} \label{theorem_compact}

The space $\boldsymbol{\mathcal{H}}_0$ is compactly embedded into $\mathbf{L}^2(B)\times\mathbf{L}^2(B)\times H_*^1(B)$.

\end{theorem}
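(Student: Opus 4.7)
The plan is to take an arbitrary bounded sequence $\{(\mathbf{w}_j,\mathbf{v}_j,p_j)\}_{j\in\mathbb{N}}$ in $\boldsymbol{\mathcal{H}}_0$ and extract a subsequence convergent in $\mathbf{L}^2(B)\times\mathbf{L}^2(B)\times H_*^1(B)$, working one factor at a time in the spirit of the proof of Theorem \ref{theorem_X0}. Throughout I will use Lemma \ref{lemma_H0} to translate membership in $\boldsymbol{\mathcal{H}}_0$ into the pointwise differential and boundary conditions $\div(\epsilon\mathbf{w}_j)=0$, $\div\mathbf{v}_j=0$, $\Delta p_j=0$ in $B$ together with $\un\cdot\mathbf{v}_j=0$ and $\normal{p_j}+\un\cdot\epsilon\mathbf{w}_j=0$ on $\partial B$. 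In addition, the definition of $\boldsymbol{\mathcal{H}}$ gives $\un\times\mathbf{w}_j=\un\times\mathbf{v}_j$ on $\partial B$, which lets me transfer tangential-trace bounds from $\mathbf{v}_j$ to $\mathbf{w}_j$.

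The $\mathbf{v}_j$ factor is handled exactly as in Theorem \ref{theorem_X0}: combined with the uniform $\Hcurl{B}$ bound, the conditions $\div\mathbf{v}_j=0$ in $B$ and $\un\cdot\mathbf{v}_j=0$ on $\partial B$ place $\{\mathbf{v}_j\}$ in the hypotheses of Theorem \ref{theorem_costabel}, so \eqref{costabel3} gives a uniform $\mathbf{H}^{1/2}(B)$ bound and hence a subsequence converging in $\mathbf{L}^2(B)$. Estimate \eqref{costabel1} simultaneously yields $\{\un\times\mathbf{v}_j\}$ bounded in $\mathbf{L}_t^2(\partial B)$, and the identity $\un\times\mathbf{w}_j=\un\times\mathbf{v}_j$ on $\partial B$ transfers this bound to $\{\un\times\mathbf{w}_j\}$.

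The $\mathbf{w}_j$ factor is where the main difficulty lies, since Lemma \ref{lemma_H0} delivers only the weighted condition $\div(\epsilon\mathbf{w}_j)=0$ rather than the $L^2$-integrability of $\div\mathbf{w}_j$ required by Theorem \ref{theorem_costabel}. To circumvent this I plan to exploit the piecewise regularity assumption $\epsilon|_D\in W_\Sigma^{1,\infty}(D)$: on each subdomain $\Omega_i$ of the partition, the product rule gives $\div\mathbf{w}_j=-\epsilon^{-1}\nabla\epsilon\cdot\mathbf{w}_j\in L^2(\Omega_i)$ with a bound uniform in $j$, and the lower bound $\epsilon\ge\epsilon_*>0$ keeps $\epsilon^{-1}$ controlled. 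Combined with the $\mathbf{L}_t^2(\partial B)$ bound on $\un\times\mathbf{w}_j$ and a Weber-type compact embedding theorem for the weighted space of $\Hcurl{B}$ fields with $\div(\epsilon\,\cdot)=0$ and piecewise $W^{1,\infty}$ coefficient (of the kind exploited in the transmission-eigenvalue analyses of \cite{chesnel,cakoni_colton_haddar}), this yields a subsequence of $\{\mathbf{w}_j\}$ convergent in $\mathbf{L}^2(B)$ together with a uniform $L^2(\partial B)$ bound on the normal trace $\un\cdot\epsilon\mathbf{w}_j$. The interface bookkeeping needed to close this weighted compactness argument is the most delicate step of the proof.

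Finally, for the pressure component I apply Theorem \ref{theorem_costabel} to $\nabla p_j$ directly: since $\curl\nabla p_j=\mathbf{0}$ and $\div\nabla p_j=\Delta p_j=0$ in $B$ by Lemma \ref{lemma_H0}, and since $\un\cdot\nabla p_j=\normal{p_j}=-\un\cdot\epsilon\mathbf{w}_j$ is bounded in $L^2(\partial B)$ by the previous step, estimate \eqref{costabel3} produces a uniform $\mathbf{H}^{1/2}(B)$ bound on $\{\nabla p_j\}$. Extracting a subsequence convergent in $\mathbf{L}^2(B)$, verifying that the limit is itself a gradient, and using the zero-mean normalization of $H_*^1(B)$ then gives convergence of $p_j$ in $H_*^1(B)$. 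Passing to a common subsequence of the three extractions delivers the claimed compact embedding of $\boldsymbol{\mathcal{H}}_0$ into $\mathbf{L}^2(B)\times\mathbf{L}^2(B)\times H_*^1(B)$.
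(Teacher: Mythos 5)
Your overall architecture (treat the three components in turn, transfer the tangential-trace bound from $\mathbf{v}_j$ to $\mathbf{w}_j$, and use the constraints of Lemma \ref{lemma_H0}) is the same as the paper's, and your treatment of $\{\mathbf{v}_j\}$ is identical. For $\{\mathbf{w}_j\}$ you also end up where the paper does: the paper simply places $\{\mathbf{w}_j\}$ in the weighted space $\mathbf{X}_0=\{\boldsymbol{\psi}\in\Hcurl{B}\mid\div(\epsilon\boldsymbol{\psi})=0,\ \un\times\boldsymbol{\psi}\in\mathbf{L}_t^2(\partial B)\}$ and cites the compact embedding of $\mathbf{X}_0$ into $\mathbf{L}^2(B)$ (\cite[Theorem 4.7]{monk}); your subdomain product-rule preprocessing is not needed for that citation and would not by itself feed Theorem \ref{theorem_costabel}, since applying that theorem on each $\Omega_i$ separately would require normal-trace control on the interior interfaces, which you do not have.

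The genuine gap is in your treatment of $p_j$. You hinge it on ``a uniform $L^2(\partial B)$ bound on the normal trace $\un\cdot\epsilon\mathbf{w}_j$,'' but no tool you invoke delivers this: the weighted compact embedding is an interior $\mathbf{L}^2(B)$ statement, and Theorem \ref{theorem_costabel} cannot be applied to $\epsilon\mathbf{w}_j$ on $B$ because when $\epsilon$ jumps across the interfaces $\partial\Omega_i$ the tangential trace of $\epsilon\mathbf{w}_j$ jumps as well, so $\epsilon\mathbf{w}_j\notin\Hcurl{B}$ and the hypothesis $\curl(\epsilon\mathbf{w}_j)\in\mathbf{L}^2(B)$ fails (only in the globally $W^{1,\infty}$ case would your claim go through). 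The step is also unnecessarily strong. Since $\epsilon\mathbf{w}_j\in\Hdivzero{B}$, the normal trace $\un\cdot(\epsilon\mathbf{w}_j)$ is automatically bounded in $H^{-1/2}(\partial B)$ by the $\Hdiv{B}$ norm, and $p_j$ solves the well-posed Neumann problem $\Delta p_j=0$, $\normal{p_j}=-\un\cdot(\epsilon\mathbf{w}_j)$, so
\begin{equation*}
\norm{\nabla p_j}_{\mathbf{L}^2(B)}\le C\norm{\un\cdot(\epsilon\mathbf{w}_j)}_{H^{-1/2}(\partial B)}.
\end{equation*}
Once a subsequence of $\{\epsilon\mathbf{w}_j\}$ converges in $\mathbf{L}^2(B)$ it converges in $\Hdiv{B}$ (the divergences all vanish), hence its normal traces converge in $H^{-1/2}(\partial B)$ and $p_j$ converges in $H_*^1(B)$ by linearity of the solution map --- no application of Theorem \ref{theorem_costabel} to $\nabla p_j$ and no $\mathbf{H}^{1/2}(B)$ regularity are needed. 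This is exactly how the paper closes the argument; you should replace your $L^2(\partial B)$/Costabel step for the pressure with it.
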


\begin{proof}

We let $\{(\mathbf{w}_m,\mathbf{v}_m,p_m)\}$ be a bounded sequence in $\boldsymbol{\mathcal{H}}_0$, and we observe that $\{\mathbf{v}_m\}$ is a bounded sequence in $\Hcurl{B}$. Since $\div\mathbf{v}_m = 0 \in L^2(B)$ and $\un\cdot\mathbf{v}_m = \mathbf{0} \in \mathbf{L}^2(\partial B)$ it follows from Theorem \ref{theorem_costabel} that $\{\un\times\mathbf{v}_m\}$ is a bounded sequence in $\mathbf{L}_t^2(\partial B)$ and $\{\mathbf{v}_m\}$ is a bounded sequence in $\mathbf{H}^{1/2}(B)$. In particular, the compact embedding of $\mathbf{H}^{1/2}(B)$ into $\mathbf{L}^2(B)$ implies the existence of a subsequence of $\{\mathbf{v}_m\}$ that converges in the latter space. By passing to the corresponding subsequence of $\{\mathbf{w}_m\}$ without changing notation, we also see that $\{\mathbf{w}_m\}$ is a bounded sequence in the space
\begin{equation*}
\mathbf{X}_0 := \left\{\boldsymbol{\psi}\in\Hcurl{B} \mid \div(\epsilon\boldsymbol{\psi}) = 0 \text{ in } B, \,\un\times\boldsymbol{\psi}\in\mathbf{L}_t^2(\partial B)\right\},
\end{equation*}
equipped with the norm defined by
\begin{equation*}
\norm{\boldsymbol{\psi}}_{\mathbf{X}_0}^2 := \norm{\boldsymbol{\psi}}_{\Hcurl{B}}^2 + \norm{\un\times\boldsymbol{\psi}}_{\mathbf{L}_t^2(\partial B)}^2,
\end{equation*}
which follows from the fact that $\un\times\mathbf{w}_m = \un\times\mathbf{v}_m$ on $\partial B$. The compact embedding of the space $\mathbf{X}_0$ into $\mathbf{L}^2(B)$ (cf. \cite[Theorem 4.7]{monk}) allows us to conclude that there exists a subsequence $\{\mathbf{w}_m\}$ that converges in $\mathbf{L}^2(B)$. Finally, we see that each $p_m\in H_*^1(B)$ satisfies the well-posed Neumann problem
\begin{align*}
\Delta p_m &= 0 \text{ in } B, \\
\normal{p_m} &= -\un\cdot(\epsilon\mathbf{w}_m) \text{ on } \partial B,
\end{align*}
which implies the existence of a constant $C$ independent of $m$ such that
\begin{equation}
\norm{\nabla p_m}_{\mathbf{L}^2(B)} \le C\norm{\un\cdot(\epsilon\mathbf{w}_m)}_{H^{-1/2}(\partial B)}. \label{neumann_estimate}
\end{equation}
Since the sequence $\{\epsilon\mathbf{w}_m\}$ lies in the space $\Hdivzero{B}$ and contains a convergent subsequence in this space, the normal trace theorem for the space $\Hdiv{B}$ and the estimate \eqref{neumann_estimate} imply that the corresponding subsequence of $\{p_m\}$ converges in $H_*^1(B)$. Therefore, we conclude that a subsequence of $\{(\mathbf{w}_m,\mathbf{v}_m,p_m)\}$ converges in the space $\mathbf{L}^2(B)\times\mathbf{L}^2(B)\times H_*^1(B)$. \proofend

\end{proof}

With this compactness result, we may now split the operator $A_\eta^\mathcal{T}$ into an invertible and compact part, and to this end we define the operators $\hat{A}^\mathcal{T}, B_\eta^\mathcal{T}:\boldsymbol{\mathcal{H}}_0\to\boldsymbol{\mathcal{H}}_0$ such that
\begin{align*}
(\hat{A}^\mathcal{T}(\mathbf{w},\mathbf{v},p),(\mathbf{w}',\mathbf{v}',p'))_{\boldsymbol{\mathcal{H}}_0} &= (\curl\mathbf{w},\curl\mathbf{w}')_B + \gamma^{-1}(\curl\mathbf{v},\curl\mathbf{v}')_B + k^2(\mathbf{w},\mathbf{w}')_B \\
&\quad\quad+ k^2(\mathbf{v},\mathbf{v}')_B - 2(\curl\mathbf{w},\curl\mathbf{v}')_B + (\nabla p,\nabla p')_B, \\
(B_\eta^\mathcal{T}(\mathbf{w},\mathbf{v},p),(\mathbf{w}',\mathbf{v}',p'))_{\boldsymbol{\mathcal{H}}_0} &= -k^2((\epsilon+1)\mathbf{w},\mathbf{w}')_B - k^2(\eta+1)(\mathbf{v},\mathbf{v}')_B + 2k^2(\epsilon\mathbf{w},\mathbf{v}')_B
\end{align*}
for all $(\mathbf{w},\mathbf{v},p)\in\boldsymbol{\mathcal{H}}_0$. From the definition of $A_\eta^\mathcal{T}$ we see that $A_\eta^\mathcal{T} = \hat{A}^\mathcal{T} + B_\eta^\mathcal{T}$. An application of Young's inequality implies that
\begin{equation*}
2\abs{(\curl\mathbf{w},\curl\mathbf{v})_B} \le \delta(\curl\mathbf{w},\curl\mathbf{w})_B + \delta^{-1}(\curl\mathbf{v},\curl\mathbf{v})_B
\end{equation*}
for any $\delta>0$, and it follows that
\begin{align*}
\abs{(\hat{A}^\mathcal{T}(\mathbf{w},\mathbf{v},p),(\mathbf{w},\mathbf{v},p))_{\boldsymbol{\mathcal{H}}_0}} &\ge (1-\delta)(\curl\mathbf{w},\curl\mathbf{w})_B + \left(\gamma^{-1} - \delta^{-1}\right)(\curl\mathbf{v},\curl\mathbf{v})_B \\
&\quad\quad+ k^2(\mathbf{w},\mathbf{w})_B+ k^2(\mathbf{v},\mathbf{v})_B + (\nabla p,\nabla p)_B.
\end{align*}
If $0<\gamma<1$ and we choose $\delta\in(\gamma,1)$, then we conclude from the Lax-Milgram Lemma that $\hat{A}^\mathcal{T}$ is invertible with bounded inverse. In the case $\gamma>1$, we may use the isomorphism defined by $\mathcal{T}(\mathbf{w},\mathbf{v},p) := (\mathbf{w},-\mathbf{v}+2\mathbf{w},p)$ in the same manner to conclude invertibility of $\hat{A}^\mathcal{T}$. In either case, the operator $B_\eta^\mathcal{T}$ is compact, as can be easily seen from the compact embedding of $\boldsymbol{\mathcal{H}}_0$ into $\mathbf{L}^2(B)\times\mathbf{L}^2(B)\times H_*^1(B)$ that we established in Theorem \ref{theorem_compact}. Therefore, we have shown that the operator $A_\eta^\mathcal{T} = \hat{A}^\mathcal{T} + B_\eta^\mathcal{T}$ is Fredholm of index zero provided that $\gamma\neq1$, and since $\mathcal{T}$ is an isomorphism we obtain the following result. We remark that, although we chose to use $\mathcal{T}$-coercivity in the present discussion, the proof of the subsequent theorem may alternatively be approached using techniques from \cite{cakoni_nguyen,nguyen_sil}.

\begin{theorem} \label{theorem_Fredholm}

If $\gamma\neq1$, then the modified interior transmission problem \eqref{nmit} is of Fredholm type. In particular, if $\eta$ is not a modified transmission eigenvalue, then there exists a unique solution $(\mathbf{w},\mathbf{v},p)\in\Hcurl{B}\times\Hcurl{B}\times H_*^1(B)$ of \eqref{nmit} satisfying the estimate
\begin{equation}
\norm{\mathbf{w}}_{\mathbf{H}(\curl,B)} + \norm{\mathbf{v}}_{\mathbf{H}(\curl,B)} + \norm{\nabla p}_B \le C\Bigr( \norm{\mathbf{f}}_B + \norm{\mathbf{g}}_B + \norm{\boldsymbol{\xi}}_{\mathbf{H}^{-1/2}(\textnormal{Div},\partial B)} + \norm{\mathbf{h}}_{\mathbf{H}^{-1/2}(\textnormal{Div},\partial B)} \Bigr). \label{fredholm_est}
\end{equation}

\end{theorem}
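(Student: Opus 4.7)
The plan is to combine the Fredholm decomposition $A_\eta^\mathcal{T} = \hat{A}^\mathcal{T} + B_\eta^\mathcal{T}$ just established---with $\hat{A}^\mathcal{T}$ invertible by Lax--Milgram (via Young's inequality, choosing $\delta \in (\gamma, 1)$ for $0 < \gamma < 1$ and using the alternative $\mathcal{T}$ for $\gamma > 1$) and $B_\eta^\mathcal{T}$ compact by Theorem \ref{theorem_compact}---with the Fredholm alternative on $\boldsymbol{\mathcal{H}}_0$, and then to unwind the reductions of Remark \ref{remark_mod2} to recover the estimate in terms of the original data.

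The delicate step is verifying that solvability of the constrained problem \eqref{varprob_T} on $\boldsymbol{\mathcal{H}}_0$ is equivalent to the strong formulation \eqref{eq_nmit}. One direction follows from the computations leading to \eqref{var_prop1}--\eqref{var_prop2b} together with Lemma \ref{lemma_H0}: every $\boldsymbol{\mathcal{H}}$-solution of the unconstrained variational problem already lies in $\boldsymbol{\mathcal{H}}_0$, and the $\mathcal{T}$ change of variable on test functions is innocuous because $\mathcal{T}$ is an isomorphism of $\boldsymbol{\mathcal{H}}$. For the reverse direction, given a solution in $\boldsymbol{\mathcal{H}}_0$, any $\boldsymbol{\mathcal{H}}$ test function decomposes into an $\boldsymbol{\mathcal{H}}_0$ component plus a gradient-type correction $(\nabla \varphi, \nabla \psi, q)$ with $(\varphi, \psi, q) \in S$; the defining orthogonality of $\boldsymbol{\mathcal{H}}_0$, together with the homogenization of the right-hand side from Remark \ref{remark_mod2}, ensures this correction contributes zero to both sides of \eqref{varprob}. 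With the equivalence in hand, the Fredholm alternative yields a unique solution with bounded inverse whenever the kernel of $A_\eta^\mathcal{T}$ is trivial, which is precisely the condition that $\eta$ is not a modified transmission eigenvalue of \eqref{mit}.

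Finally, I would trace the estimates back through the liftings of Remark \ref{remark_mod2}. The lifting $\boldsymbol{\varphi}$ absorbs $\mathbf{f}$ and $\boldsymbol{\xi}$ with the bound \eqref{phi_est} and contributes $\un \times \gamma^{-1} \curl \boldsymbol{\varphi}$ to the effective boundary datum, still controlled in $\mathbf{H}^{-1/2}(\Div, \partial B)$. The harmonic lifting $\zeta$ is controlled by $\un \cdot \mathbf{g}$ and $\nabla_{\partial B} \cdot \mathbf{h}$ in $H^{-1/2}(\partial B)$; the first is bounded by $\norm{\mathbf{g}}_{\mathbf{L}^2(B)}$ via the normal trace theorem on $\Hdivzero{B}$, and the second by the definition of the $\mathbf{H}^{-1/2}(\Div, \partial B)$ norm. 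Summing the lifting bounds with the estimate $\norm{(\mathbf{w}, \mathbf{v}, p)}_{\boldsymbol{\mathcal{H}}} \le C \norm{\mathbf{L}}_{\boldsymbol{\mathcal{H}}_0}$ coming from the bounded inverse produces \eqref{fredholm_est}. The main obstacle throughout this plan is the gradient-decomposition step that converts the constrained $\boldsymbol{\mathcal{H}}_0$ solution into a genuine $\boldsymbol{\mathcal{H}}$ solution, since it requires careful bookkeeping of surface integrals against test functions in $S$ alongside the $\mathcal{T}$-coercivity change of variable.
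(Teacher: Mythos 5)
Your proposal follows essentially the same route as the paper: the $\mathcal{T}$-coercivity splitting $A_\eta^{\mathcal{T}}=\hat{A}^{\mathcal{T}}+B_\eta^{\mathcal{T}}$ with Lax--Milgram for $\hat{A}^{\mathcal{T}}$ and compactness of $B_\eta^{\mathcal{T}}$ via Theorem \ref{theorem_compact}, the Fredholm alternative on $\boldsymbol{\mathcal{H}}_0$, and the unwinding of the liftings $\boldsymbol{\varphi}$ and $\zeta$ from Remark \ref{remark_mod2}. The only substantive addition is that you spell out the gradient-correction argument showing the $\boldsymbol{\mathcal{H}}_0$-posed problem is equivalent to the full problem on $\boldsymbol{\mathcal{H}}$ (a step the paper leaves implicit), and the one slip---the modified boundary datum is $\mathbf{h}-\un\times\curl\boldsymbol{\varphi}$ rather than involving $\gamma^{-1}\curl\boldsymbol{\varphi}$---is immaterial since $\gamma$ is a constant.
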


We remark that the immediate result holds only for the equivalent problem \eqref{eq_nmit}, and Theorem \ref{theorem_Fredholm} follows from standard arguments on lifting functions since both $\boldsymbol{\varphi}$ and $\zeta$ defined in Remark \ref{remark_mod2} are controlled by the appropriate norms of the right-hand sides of \eqref{nmit}. The solvability result we established in Theorem \ref{theorem_Fredholm} and its preceding arguments will allow us to study the class of modified electromagnetic transmission eigenvalues in the next section.

\section{Properties of modified electromagnetic transmission eigenvalues}

\label{sec_props}

We now investigate the properties of modified transmission eigenvalues, and we begin with an application of the analytic Fredholm theorem (cf. \cite[Theorem 8.26]{colton_kress}). Since the mapping $\eta\mapsto B_\eta^\mathcal{T}$ is clearly analytic, this theorem asserts that either i) the operator $A_\eta^\mathcal{T} = \hat{A}^\mathcal{T} + B_\eta^\mathcal{T}$ is invertible for no values of $\eta$, or ii) the operator $A_\eta^\mathcal{T} = \hat{A}^\mathcal{T} + B_\eta^\mathcal{T}$ is invertible for all $\eta$ except possibly in a discrete subset of the complex plane. Our aim is to show that the second statement holds, which will follow once we establish the existence of at least one value of $\eta$ for which $A_\eta^\mathcal{T}$ is injective, as this property implies invertibility by Theorem \ref{theorem_Fredholm}. \par

We suppose that $(\mathbf{w},\mathbf{v},p)$ satisfies \eqref{mit} for some $\eta\in\mathbb{C}$, which may be written variationally as
\begin{align*}
(\curl\mathbf{w},\curl\mathbf{w}')_B - \gamma^{-1}(\curl\mathbf{v},&\curl\mathbf{v}')_B - k^2(\epsilon\mathbf{w},\mathbf{w}')_B + k^2\eta(\mathbf{v},\mathbf{v}')_B  \nonumber \\
&- k^2(\nabla p,\mathbf{v}')_B - k^2(\mathbf{v},\nabla p')_B = 0 \quad\forall(\mathbf{w}',\mathbf{v}',p')\in\boldsymbol{\mathcal{H}}_0.
\end{align*}
We remark that the last two terms on the left-hand side of this equation vanish by definition of $\boldsymbol{\mathcal{H}}_0$, and as a result we exclude them from this point onward. If we choose $(\mathbf{w}',\mathbf{v}',p') = (\mathbf{w},\mathbf{v},p)$ and take the imaginary part of this equation, then we see that
\begin{equation*}
k^2\Im(\eta)(\mathbf{v},\mathbf{v})_B = k^2(\Im(\epsilon)\mathbf{w},\mathbf{w})_B.
\end{equation*}
We observe that every eigenvalue $\eta$ must have nonnegative imaginary part, and it follows that $A_{-i\tau}^{\mathcal{T}}$ is injective whenever $\tau>0$. Thus, by Theorem \ref{theorem_Fredholm} and the analytic Fredholm theorem we conclude that the set of modified electromagnetic transmission eigenvalues is discrete without finite accumulation point. We summarize these results in the following theorem.

%If $\Im(\epsilon) = 0$, then for $\eta = i\tau$ for $\tau>0$ we must have $\mathbf{v} = \mathbf{0}$. This result implies that $\mathbf{w}$ and $p$ both vanish as well, and we conclude that $\eta = i\tau$, $\tau>0$, cannot be a modified transmission eigenvalue. Thus, from Theorem \ref{theorem_Fredholm} it follows that that $A_{i\tau}^\mathcal{T}$ is invertible for any $\tau>0$, and the analytic Fredholm theorem implies that $A_\eta^\mathcal{T}$ is invertible for all $\eta$ except in a discrete subset of the complex plane. Similarly, if $\Im(\epsilon)>0$ on a set of positive measure, then $A_\tau^\mathcal{T}$ must be invertible for all $\tau\in\mathbb{R}$ and we again obtain the desired result. In either case, by Theorem \ref{theorem_Fredholm} and the analytic Fredholm theorem we conclude that the set of modified electromagnetic transmission eigenvalues is discrete. \par

\begin{theorem} \label{theorem_discrete}
If $\gamma\neq1$, then the set of modified electromagnetic transmission eigenvalues is discrete in the complex plane, and, if they exist, each eigenvalue has nonnegative imaginary part.
\end{theorem}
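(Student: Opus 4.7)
The plan is to combine the Fredholm theory from Theorem \ref{theorem_Fredholm} with an analytic Fredholm argument applied to the operator pencil $\eta\mapsto A_\eta^{\mathcal{T}}=\hat{A}^{\mathcal{T}}+B_\eta^{\mathcal{T}}$. Since $B_\eta^{\mathcal{T}}$ depends affinely (in fact linearly) on $\eta$ and is compact while $\hat{A}^{\mathcal{T}}$ is invertible, this pencil is analytic on $\mathbb{C}$. The analytic Fredholm theorem then yields the usual dichotomy: either $A_\eta^{\mathcal{T}}$ is invertible for no value of $\eta$, or it is invertible outside a discrete subset of $\mathbb{C}$ with no finite accumulation point. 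I therefore need only rule out the first alternative by exhibiting a single $\eta$ for which $A_\eta^{\mathcal{T}}$ is injective, since Theorem \ref{theorem_Fredholm} automatically promotes injectivity to invertibility.

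To this end I would take a hypothetical nontrivial solution $(\mathbf{w},\mathbf{v},p)\in\boldsymbol{\mathcal{H}}_0$ of \eqref{mit} and test against $(\mathbf{w},\mathbf{v},p)$ itself in the variational formulation. The cross terms $k^2(\nabla p,\mathbf{v})_B$ and $k^2(\mathbf{v},\nabla p)_B$ vanish by the very definition of $\boldsymbol{\mathcal{H}}_0$ (via the condition $(\mathbf{v},\nabla q)_B=0$ for all $q\in H_*^1(B)$), leaving
\begin{equation*}
\norm{\curl\mathbf{w}}_B^2-\gamma^{-1}\norm{\curl\mathbf{v}}_B^2-k^2(\epsilon\mathbf{w},\mathbf{w})_B+k^2\eta\norm{\mathbf{v}}_B^2=0.
\end{equation*}
Since $\gamma>0$, the first two terms are real, and taking the imaginary part yields
\begin{equation*}
k^2\Im(\eta)\norm{\mathbf{v}}_B^2=k^2(\Im(\epsilon)\mathbf{w},\mathbf{w})_B\ge 0.
\end{equation*}
This delivers the second conclusion of the theorem at once: any eigenvalue must satisfy $\Im(\eta)\ge 0$.

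For the first conclusion I would specialize to $\eta=-i\tau$ with $\tau>0$, so that $\Im(\eta)<0$ forces $\mathbf{v}=\mathbf{0}$ in $B$. Substituting into \eqref{mit2} gives $\nabla p=\mathbf{0}$, hence $p=0$ in $H_*^1(B)$, while the boundary conditions \eqref{mit5}--\eqref{mit6} collapse to $\un\times\mathbf{w}=\mathbf{0}$ and $\un\times\curl\mathbf{w}=\mathbf{0}$ on $\partial B$. Extending $\mathbf{w}$ by zero to $\mathbb{R}^3\setminus\overline{B}$ then produces an $\Hcurlloc{\mathbb{R}^3}$ field solving $\curl\curl\mathbf{w}-k^2\tilde\epsilon\mathbf{w}=\mathbf{0}$ weakly, where $\tilde\epsilon$ equals $\epsilon$ in $B$ and $1$ elsewhere, and the unique continuation principle permitted by the $W_\Sigma^{1,\infty}$ regularity of $\epsilon$ forces $\mathbf{w}=\mathbf{0}$ in $B$. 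Thus $A_{-i\tau}^{\mathcal{T}}$ is injective, the analytic Fredholm dichotomy selects the favorable alternative, and discreteness of the eigenvalue set follows.

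The main obstacle is modest and sits entirely in this injectivity step: one must verify that the vanishing of $\mathbf{v}$ really propagates to $p$ and $\mathbf{w}$, relying on the piecewise regularity of $\epsilon$ to legitimately apply unique continuation across the pieces of the partition $\{\Omega_i\}$ up to $\partial B$. Everything else reduces to a clean application of results already established.
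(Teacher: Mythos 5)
Your proposal is correct and follows essentially the same route as the paper: the analytic Fredholm dichotomy for the pencil $\eta\mapsto A_\eta^{\mathcal{T}}=\hat{A}^{\mathcal{T}}+B_\eta^{\mathcal{T}}$, the imaginary-part identity $k^2\Im(\eta)\norm{\mathbf{v}}_B^2=k^2(\Im(\epsilon)\mathbf{w},\mathbf{w})_B$ obtained by testing with the solution itself in $\boldsymbol{\mathcal{H}}_0$, and injectivity of $A_{-i\tau}^{\mathcal{T}}$ for $\tau>0$. Your only departure is that you spell out the step the paper leaves implicit --- that $\mathbf{v}=\mathbf{0}$ forces $p=0$ and, via the boundary conditions, extension by zero, and unique continuation, $\mathbf{w}=\mathbf{0}$ --- which is a welcome addition rather than a deviation.
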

We define the space
\begin{equation*}
\Hdivnormzero{B} := \{\mathbf{u}\in\mathbf{L}^2(B) \mid \div\mathbf{u} = 0 \text{ in } B, \; \un\cdot\mathbf{u} = 0 \text{ on } \partial B\}.
\end{equation*}
%Given $z\in\mathbb{R}$ such that \eqref{source1} below is well-posed, 
For a suitable choice of $z\in\mathbb{R}$, we next consider the operator $\boldPsi_z^{(\epsilon)}:\Hdivnormzero{B}\to\Hdivnormzero{B}$ defined by $\boldPsi_z^{(\epsilon)}\mathbf{g} := \mathbf{v}$, where $(\mathbf{w},\mathbf{v},p)\in\boldsymbol{\mathcal{H}}_0$ satisfies 
\begin{align}
\begin{split} \label{source1}
&(\curl\mathbf{w},\curl\mathbf{w}')_B - \gamma^{-1}(\curl\mathbf{v},\curl\mathbf{v}')_B \\
&\hspace{7em} - k^2(\epsilon\mathbf{w},\mathbf{w}')_B + k^2z(\mathbf{v},\mathbf{v}')_B = -k^2(\mathbf{g},\mathbf{v}')_B \quad\forall(\mathbf{w}',\mathbf{v}',p')\in\boldsymbol{\mathcal{H}}_0, 
\end{split}
\end{align}
which is an equivalent variational formulation of \eqref{nmit} with $\mathbf{f} = \mathbf{0}$, $\boldsymbol{\xi} = \mathbf{0}$, $\mathbf{h} = \mathbf{0}$, and $k^2\mathbf{g}$ in place of $\mathbf{g}$ for convenience. We note that the ability to choose $z\in\mathbb{R}$ such that \eqref{source1} is well-posed follows from Theorem \ref{theorem_discrete}. We remark that the scalar field $p$ no longer appears in the equation, but it is still determined uniquely by the properties of the space $\boldsymbol{\mathcal{H}}_0$. We first establish the following results for the operator $\boldPsi_z^{(\epsilon)}$.

\begin{prop} \label{prop_Psi}
The operator $\boldPsi_z^{(\epsilon)}$ is injective. Moreover, a given $\eta\in\mathbb{C}$ is a modified electromagnetic transmission eigenvalue if and only if $(\eta-z)^{-1}$ is an eigenvalue of $\boldPsi_z^{(\epsilon)}$.
\end{prop}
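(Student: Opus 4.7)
The approach exploits a simple algebraic identity: if $(\mathbf{w},\mathbf{v},p)$ solves the source problem \eqref{source1} with source $\mathbf{g}$ and parameter $z$, then the substitution $\mathbf{g}=(\eta-z)\mathbf{v}$ converts the right-hand side $k^2\mathbf{g}$ into $k^2(\eta-z)\mathbf{v}$, which combines with the $-k^2 z\mathbf{v}$ term to reproduce the eigenvalue equation \eqref{mit2} at parameter $\eta$. Well-posedness of the source problem underlying $\boldPsi_z^{(\epsilon)}$ is furnished by Theorem \ref{theorem_Fredholm}, since $z$ is chosen outside the discrete eigenvalue set from Theorem \ref{theorem_discrete}.

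For injectivity, suppose $\boldPsi_z^{(\epsilon)}\mathbf{g}=\mathbf{0}$, so the associated triple has $\mathbf{v}=\mathbf{0}$; the strong form of the $\mathbf{v}$--equation then collapses to $k^2\nabla p=k^2\mathbf{g}$, and since $\mathbf{g}\in\Hdivnormzero{B}$ the potential $p$ is harmonic in $B$ with vanishing normal derivative on $\partial B$. Because $p\in H_*^1(B)$, we conclude $p=0$ and hence $\mathbf{g}=\mathbf{0}$.

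For the forward implication, given a nontrivial solution $(\mathbf{w},\mathbf{v},p)$ of \eqref{mit} at the eigenvalue $\eta$, I first rule out $\mathbf{v}=\mathbf{0}$: otherwise \eqref{mit2} yields $\nabla p=\mathbf{0}$ and hence $p=0$, while \eqref{mit5} gives $\un\times\mathbf{w}=\mathbf{0}$, so Assumption \ref{assumption_dirichlet} applied to \eqref{mit1} forces $\mathbf{w}=\mathbf{0}$, contradicting nontriviality. Setting $\mathbf{g}:=(\eta-z)\mathbf{v}$, the conditions \eqref{mit3}--\eqref{mit4} place $\mathbf{g}$ in $\Hdivnormzero{B}$, and $\mathbf{g}\neq\mathbf{0}$ since $\eta\neq z$. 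Rewriting \eqref{mit2} as $\curl\gamma^{-1}\curl\mathbf{v}-k^2 z\mathbf{v}+k^2\nabla p=k^2\mathbf{g}$ and checking via Lemma \ref{lemma_H0} that $(\mathbf{w},\mathbf{v},p)\in\boldsymbol{\mathcal{H}}_0$ (by taking divergences and normal traces of \eqref{mit1}--\eqref{mit2} and invoking \eqref{mit6}), I conclude that the triple solves the source problem defining $\boldPsi_z^{(\epsilon)}\mathbf{g}$, and hence $\boldPsi_z^{(\epsilon)}\mathbf{g}=\mathbf{v}=(\eta-z)^{-1}\mathbf{g}$. The converse reverses this substitution: starting from $\boldPsi_z^{(\epsilon)}\mathbf{g}=(\eta-z)^{-1}\mathbf{g}$ with $\mathbf{g}\neq\mathbf{0}$, the relation $\mathbf{g}=(\eta-z)\mathbf{v}$ turns the source equation back into \eqref{mit2}, while \eqref{mit1} and \eqref{mit3}--\eqref{mit6} are inherited directly from the source problem, giving a nontrivial triple since $\mathbf{v}=(\eta-z)^{-1}\mathbf{g}\neq\mathbf{0}$.

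The only genuinely delicate step is the bidirectional passage between the weak formulation \eqref{source1} on $\boldsymbol{\mathcal{H}}_0$ and the strong system \eqref{nmit}, most notably the recovery of the transmission condition \eqref{mit6} and the verification of the boundary relation $\partial_\nu p+\un\cdot\epsilon\mathbf{w}=0$; this equivalence is already an ingredient of Theorem \ref{theorem_Fredholm}, so no new analysis is required here. The remainder of the argument is careful bookkeeping of the $z$-versus-$\eta$ substitution.
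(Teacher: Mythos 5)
Your proof is correct, and the eigenvalue-correspondence half is essentially the paper's argument: the same substitution $\mathbf{g}=(\eta-z)\mathbf{v}$, $\lambda=(\eta-z)^{-1}$, with the paper merely presenting the direction ``eigenvalue of $\boldPsi_z^{(\epsilon)}$ $\Rightarrow$ modified transmission eigenvalue'' first and reversing, whereas you present the forward implication first. Your explicit check that $\mathbf{v}\neq\mathbf{0}$ for a nontrivial eigentriple (via $p=0$, then $\un\times\mathbf{w}=\mathbf{0}$ and Assumption \ref{assumption_dirichlet}) is a detail the paper leaves implicit, and your verification of membership in $\boldsymbol{\mathcal{H}}_0$ via Lemma \ref{lemma_H0} is the right bookkeeping. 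Where you genuinely diverge is injectivity. The paper stays entirely inside the variational formulation \eqref{source1}: with $\mathbf{v}=\mathbf{0}$ it tests with $(\mathbf{w}',\mathbf{0},p')$, $\mathbf{w}'\in\Hcurltr{B}$, to conclude $\mathbf{w}=\mathbf{0}$ from Assumption \ref{assumption_dirichlet}, and then reads off $(\mathbf{g},\mathbf{v}')_B=0$ for all admissible $\mathbf{v}'$, which requires (tacitly) density of the $\mathbf{v}'$-components of $\boldsymbol{\mathcal{H}}_0$ in $\Hdivnormzero{B}$. You instead pass to the strong form of the $\mathbf{v}$-equation to get $\nabla p=\mathbf{g}$ and kill $\mathbf{g}$ by the orthogonality of gradients to $\Hdivnormzero{B}$ (equivalently, the homogeneous harmonic Neumann problem). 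Your route is shorter and avoids both Assumption \ref{assumption_dirichlet} and the density step, but it leans on the weak-to-strong equivalence on $\boldsymbol{\mathcal{H}}_0$ — in particular on recovering the Lagrange multiplier $\nabla p$ from a formulation in which $p$ does not appear — which is asserted in the paper's setup of \eqref{source1} but is exactly the kind of de Rham argument the paper's own injectivity proof is structured to sidestep. Since the paper does claim that equivalence, your argument is sound as written; if one wanted it self-contained, one line suffices: testing \eqref{source1} with $\mathbf{w}'=\mathbf{0}$ and divergence-free $\mathbf{v}'$ shows $\curl\gamma^{-1}\curl\mathbf{v}-k^2z\mathbf{v}-k^2\mathbf{g}$ is a gradient, and with $\mathbf{v}=\mathbf{0}$ this gives $\mathbf{g}=\nabla\tilde{p}$, whence $\norm{\mathbf{g}}_B^2=(\nabla\tilde{p},\mathbf{g})_B=0$.
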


\begin{proof}
We suppose that $\boldPsi_z^{(\epsilon)}\mathbf{g} = \mathbf{0}$ for some $\mathbf{g}\in\Hdivnormzero{B}$, and from \eqref{source1} we have
\begin{equation}
(\curl\mathbf{w},\curl\mathbf{w}')_B - k^2(\epsilon\mathbf{w},\mathbf{w}')_B  = -k^2(\mathbf{g},\mathbf{v}')_B \quad\forall(\mathbf{w}',\mathbf{v}',p')\in\boldsymbol{\mathcal{H}}_0. \label{inj1}
\end{equation}
In particular, by choosing $\mathbf{w}'\in\Hcurltr{B}$ and $\mathbf{v}' = \mathbf{0}$ we obtain
\begin{equation*}
(\curl\mathbf{w},\curl\mathbf{w}')_B - k^2(\epsilon\mathbf{w},\mathbf{w}')_B  = 0 \quad\forall\mathbf{w}'\in\Hcurltr{B}.
\end{equation*}
We see that $\mathbf{w}$ satisfies \eqref{dir} with $\mathbf{f} = \mathbf{0}$ and $\boldsymbol{\xi} = \mathbf{0}$, and consequently by Assumption \ref{assumption_dirichlet} we conclude that $\mathbf{w} = \mathbf{0}$. From \eqref{inj1} we immediately have $\mathbf{g} = \mathbf{0}$, and it follows that $\boldPsi_z^{(\epsilon)}$ is injective. \par

For the second assertion, we suppose that $\boldPsi_z^{(\epsilon)} \mathbf{g} = \lambda\mathbf{g}$ for some $\lambda\in\mathbb{C}$ and nonzero $\mathbf{g}\in\Hdivnormzero{B}$. We note that $\lambda\neq0$ since $\boldPsi_z^{(\epsilon)}$ is injective. We may write $\mathbf{g} = \lambda^{-1}\mathbf{v}$, and from \eqref{source1} we have
\begin{equation*}
(\curl\mathbf{w},\curl\mathbf{w}')_B - \gamma^{-1}(\curl\mathbf{v},\curl\mathbf{v}')_B - k^2(\epsilon\mathbf{w},\mathbf{w}')_B + k^2z(\mathbf{v},\mathbf{v}')_B = -k^2(\lambda^{-1}\mathbf{v},\mathbf{v}')_B \quad\forall(\mathbf{w}',\mathbf{v}',p')\in\boldsymbol{\mathcal{H}}_0. 
\end{equation*}
By rearranging this expression we obtain
\begin{equation*}
(\curl\mathbf{w},\curl\mathbf{w}')_B - \gamma^{-1}(\curl\mathbf{v},\curl\mathbf{v}')_B - k^2(\epsilon\mathbf{w},\mathbf{w}')_B + k^2(z+\lambda^{-1})(\mathbf{v},\mathbf{v}')_B = 0 \quad\forall(\mathbf{w}',\mathbf{v}',p')\in\boldsymbol{\mathcal{H}}_0.
\end{equation*}
Since $\mathbf{v}\neq0$ by injectivity of $\boldPsi_z^{(\epsilon)}$, it follows that $\eta = z+\lambda^{-1}$ is a modified electromagnetic transmission eigenvalue. Noting that $\lambda = (\eta-z)^{-1}$ and following these steps in reverse order provides the converse result. \proofend
\end{proof}

As a result of Proposition \ref{prop_Psi} we may establish properties of modified electromagnetic transmission eigenvalues by studying the spectrum of $\boldPsi_z^{(\epsilon)}$. In the following lemma we prove that our modification of \eqref{mp} indeed results in a compact solution operator.

\begin{lemma} \label{lemma_Psi_compact}

The operator $\boldPsi_z^{(\epsilon)}:\Hdivnormzero{B}\to\Hdivnormzero{B}$ is compact.

\end{lemma}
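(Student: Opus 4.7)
The plan is to show compactness directly from the definition: take a bounded sequence $\{\mathbf{g}_n\} \subset \Hdivnormzero{B}$, form $\mathbf{v}_n := \boldPsi_z^{(\epsilon)}\mathbf{g}_n$, and produce a subsequence converging in $\mathbf{L}^2(B)$. The two ingredients already proved earlier in the paper do essentially all of the work, so this will be a short chaining argument.

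First, since $z$ was chosen so that \eqref{source1} is well-posed, $z$ is not a modified transmission eigenvalue, and Theorem \ref{theorem_Fredholm} applies to \eqref{nmit} with data $\mathbf{f}=\mathbf{0}$, $\boldsymbol{\xi}=\mathbf{0}$, $\mathbf{h}=\mathbf{0}$, and $k^2\mathbf{g}_n$ in place of $\mathbf{g}$ (which lies in $\Hdivzero{B}$ because $\mathbf{g}_n \in \Hdivnormzero{B}$). The estimate \eqref{fredholm_est} then yields a bound of the form
\begin{equation*}
\norm{\mathbf{w}_n}_{\Hcurl{B}} + \norm{\mathbf{v}_n}_{\Hcurl{B}} + \norm{\nabla p_n}_B \le C\,\norm{\mathbf{g}_n}_B,
\end{equation*}
so the triple $(\mathbf{w}_n,\mathbf{v}_n,p_n)$ is bounded in $\boldsymbol{\mathcal{H}}$. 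Moreover, the variational formulation \eqref{source1} was posed on $\boldsymbol{\mathcal{H}}_0$, so by construction $(\mathbf{w}_n,\mathbf{v}_n,p_n) \in \boldsymbol{\mathcal{H}}_0$, hence the sequence is bounded in $\boldsymbol{\mathcal{H}}_0$.

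Second, by Theorem \ref{theorem_compact} the embedding $\boldsymbol{\mathcal{H}}_0 \hookrightarrow \mathbf{L}^2(B)\times\mathbf{L}^2(B)\times H_*^1(B)$ is compact, so we may extract a subsequence (not relabeled) along which $(\mathbf{w}_n,\mathbf{v}_n,p_n)$ converges in that product space. In particular $\{\mathbf{v}_n\}$ converges in $\mathbf{L}^2(B)$. Because each $(\mathbf{w}_n,\mathbf{v}_n,p_n) \in \boldsymbol{\mathcal{H}}_0$, Lemma \ref{lemma_H0} guarantees $\div\mathbf{v}_n = 0$ in $B$ and $\un\cdot\mathbf{v}_n = 0$ on $\partial B$, i.e.\ $\mathbf{v}_n \in \Hdivnormzero{B}$. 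Since $\Hdivnormzero{B}$ is a closed subspace of $\mathbf{L}^2(B)$ equipped with the $\mathbf{L}^2$ norm, the limit of $\{\mathbf{v}_n\}$ also lies in $\Hdivnormzero{B}$, and convergence in $\mathbf{L}^2(B)$ is convergence in $\Hdivnormzero{B}$. This exhibits a convergent subsequence of $\{\boldPsi_z^{(\epsilon)}\mathbf{g}_n\}$, which is exactly compactness of $\boldPsi_z^{(\epsilon)}$.

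I do not anticipate any real obstacle: both heavy pieces (the \emph{a priori} estimate from the Fredholm theorem and the compact embedding of $\boldsymbol{\mathcal{H}}_0$) have already been established. The only point requiring a moment of care is checking that the right-hand side $k^2\mathbf{g}_n$ is an admissible datum for Theorem \ref{theorem_Fredholm}, i.e.\ that it lies in $\Hdivzero{B}$; this is immediate from $\mathbf{g}_n \in \Hdivnormzero{B}$.
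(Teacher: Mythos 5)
Your argument is correct and follows essentially the same route as the paper: both proofs combine the a priori estimate \eqref{fredholm_est} (valid since $z$ is not a modified transmission eigenvalue) with a compact embedding to extract an $\mathbf{L}^2(B)$-convergent subsequence of $\{\boldPsi_z^{(\epsilon)}\mathbf{g}_n\}$, and then observe that $\Hdivnormzero{B}$ is closed in $\mathbf{L}^2(B)$ so the limit stays in the right space. The only cosmetic difference is that the paper applies Theorem \ref{theorem_costabel} directly to $\mathbf{v}_n$ (using $\div\mathbf{v}_n=0$ and $\un\cdot\mathbf{v}_n=0$) to get an $\mathbf{H}^{1/2}(B)$ bound, whereas you invoke the packaged compact embedding of $\boldsymbol{\mathcal{H}}_0$ from Theorem \ref{theorem_compact}, whose proof handles the $\mathbf{v}$-component by exactly that Costabel argument.
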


\begin{proof}

We suppose that $\{\mathbf{g}_n\}$ is a bounded sequence in $\Hdivnormzero{B}$. The estimate \eqref{fredholm_est} combined with Theorem \ref{theorem_costabel} implies that
\begin{equation*}
\norm{\boldPsi_z^{(\epsilon)}\mathbf{g}}_{\mathbf{H}^{1/2}(B)} \le C\norm{\mathbf{g}}_B
\end{equation*}
for a constant $C>0$ independent of $\mathbf{g}$, and it follows that the sequence $\{\boldPsi_z^{(\epsilon)} \mathbf{g}_n\}$ is bounded in $\mathbf{H}^{1/2}(B)$. From the compact embedding of $\mathbf{H}^{1/2}(B)$ into $\mathbf{L}^2(B)$ we obtain a subsequence of $\{\boldPsi_z^{(\epsilon)} \mathbf{g}_n\}$ that converges to some $\mathbf{v}_0$ in $\mathbf{L}^2(B)$. Continuity of the divergence and normal trace operators yield $\mathbf{v}_0\in\Hdivnormzero{B}$ with convergence in this space as well, and we conclude that $\boldPsi_z^{(\epsilon)}$ is compact. \proofend
\end{proof}

With the result of this lemma, the spectral theorem for compact operators immediately provides another proof that the set of modified transmission eigenvalues is discrete without finite accumulation point, and next we see that if $\epsilon$ is real-valued it follows that eigenvalues must exist as well. Indeed, if we suppose that $\epsilon$ is real-valued and we let $(\mathbf{w}_j,\mathbf{v}_j,p_j)$ satisfy \eqref{source1} for $\mathbf{g} = \mathbf{g}_j\in\Hdivnormzero{B}$, $j = 1,2$, then we see that
\begin{align*}
k^2(\mathbf{g}_1,\boldPsi_z^{(\epsilon)}\mathbf{g}_2)_B &= k^2(\mathbf{g}_1,\mathbf{v}_2)_B \\
&= (\curl\mathbf{w}_1,\curl\mathbf{w}_2)_B - \gamma^{-1}(\curl\mathbf{v}_1,\curl\mathbf{v}_2)_B - k^2(\epsilon\mathbf{w}_1,\mathbf{w}_2)_B + k^2z(\mathbf{v}_1,\mathbf{v}_2)_B \\
&= \overline{(\curl\mathbf{w}_2,\curl\mathbf{w}_1)_B} - \gamma^{-1}\overline{(\curl\mathbf{v}_2,\curl\mathbf{v}_1)_B} - k^2\overline{(\epsilon\mathbf{w}_2,\mathbf{w}_1)_B} + k^2z\overline{(\mathbf{v}_2,\mathbf{v}_1)_B} \\
&= k^2\overline{(\mathbf{g}_2,\mathbf{v}_1)_B} \\
&= k^2(\boldPsi_z^{(\epsilon)}\mathbf{g}_1,\mathbf{g}_2)_B.
\end{align*}
Thus, the operator $\boldPsi_z^{(\epsilon)}$ is self-adjoint, and we summarize the immediate consequences of the spectral theorem for compact self-adjoint operators in the following theorem.

\begin{theorem} \label{theorem_eigs}

If $\gamma\neq1$ and $\epsilon$ is real-valued, then all of the modified electromagnetic transmission eigenvalues are real and infinitely many exist.

\end{theorem}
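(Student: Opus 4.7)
The plan is to derive Theorem \ref{theorem_eigs} as a direct application of the spectral theorem for compact self-adjoint operators to $\boldPsi_z^{(\epsilon)}$, since every essential ingredient has already been assembled: Lemma \ref{lemma_Psi_compact} gives compactness of $\boldPsi_z^{(\epsilon)}:\Hdivnormzero{B}\to\Hdivnormzero{B}$, the calculation immediately preceding the theorem shows that $\boldPsi_z^{(\epsilon)}$ is self-adjoint whenever $\epsilon$ is real-valued and $z\in\mathbb{R}$ is chosen as in the construction, and Proposition \ref{prop_Psi} supplies both injectivity of $\boldPsi_z^{(\epsilon)}$ and the bijective correspondence $\eta \leftrightarrow (\eta-z)^{-1}$ between modified transmission eigenvalues and nonzero eigenvalues of $\boldPsi_z^{(\epsilon)}$.

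For the reality assertion, first I would invoke the spectral theorem: every nonzero eigenvalue $\lambda$ of a compact self-adjoint operator is real. By Proposition \ref{prop_Psi}, every modified electromagnetic transmission eigenvalue $\eta$ has the form $\eta = z + \lambda^{-1}$ for some such $\lambda\in\mathbb{R}\setminus\{0\}$, and since $z\in\mathbb{R}$ it follows that $\eta\in\mathbb{R}$. This step is essentially bookkeeping; the work was done in establishing self-adjointness.

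For the existence of infinitely many eigenvalues, the critical observation is that $\Hdivnormzero{B}$ is infinite-dimensional. The spectral theorem provides an orthonormal system of eigenvectors of $\boldPsi_z^{(\epsilon)}$ whose closed span, together with $\ker\boldPsi_z^{(\epsilon)}$, exhausts the entire Hilbert space. Injectivity, from Proposition \ref{prop_Psi}, forces $\ker\boldPsi_z^{(\epsilon)} = \{\mathbf{0}\}$, so the eigenvectors themselves must form a basis of an infinite-dimensional space. This rules out the finite-rank possibility and yields a sequence $\{\lambda_n\}$ of nonzero real eigenvalues accumulating only at zero. Translating back via $\eta_n = z + \lambda_n^{-1}$ gives infinitely many modified electromagnetic transmission eigenvalues, with $|\eta_n|\to\infty$.

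The only subtle point in the argument is combining injectivity with the infinite-dimensionality of the underlying space to guarantee that infinitely many \emph{nonzero} eigenvalues exist, rather than finitely many eigenvalues together with an infinite-dimensional kernel; this is where the apparatus built in Proposition \ref{prop_Psi} is essential. No further analytical obstacle arises beyond the compactness and self-adjointness already established.
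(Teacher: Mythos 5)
Your proposal is correct and follows essentially the same route as the paper, which likewise presents the theorem as an immediate consequence of the spectral theorem for compact self-adjoint operators applied to $\boldsymbol{\Psi}_z^{(\epsilon)}$, using the compactness from Lemma \ref{lemma_Psi_compact}, the self-adjointness computation preceding the theorem, and the injectivity and eigenvalue correspondence from Proposition \ref{prop_Psi}. Your write-up simply makes explicit the bookkeeping (triviality of the kernel plus infinite-dimensionality of $\mathbf{H}_0(\textnormal{div}^0,B)$ forcing infinitely many nonzero eigenvalues) that the paper leaves implicit.
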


Unfortunately, if $\epsilon$ has a nonzero imaginary part, then we have no general result on existence of eigenvalues as this operator is no longer self-adjoint. However, a limited existence result applicable when $\epsilon$ has sufficiently small imaginary part will be presented in a forthcoming manuscript (see \cite{cakoni_colton_haddar} for a similar result for standard transmission eigenvalues).

\section{Determination of modified transmission eigenvalues from electric far field data} \label{sec_determine}

In this section we establish that modified electromagnetic transmission eigenvalues may be computed from electric far field data using the linear sampling method. We note that we correct some errors in a similar analysis performed in \cite{camano_lackner_monk} for electromagnetic Stekloff eigenvalues. We begin by recalling that an electric dipole with polarization $\mathbf{q}$ is defined by
\begin{align}
\begin{split}
\mathbf{E}_e(\mathbf{x},\mathbf{z},\mathbf{q}) &:= \frac{i}{k}\textbf{curl}_\mathbf{x}\,\textbf{curl}_\mathbf{x}\, \mathbf{q}\Phi(\mathbf{x},\mathbf{z}), \\
\mathbf{H}_e(\mathbf{x},\mathbf{z},\mathbf{q}) &:= \textbf{curl}_\mathbf{x}\, \mathbf{q}\Phi(\mathbf{x},\mathbf{z}).
\end{split}
\end{align}
The electric field $\mathbf{E}_e(\cdot,\mathbf{z},\mathbf{q})$ is a radiating solution to Maxwell's equations outside of a neighborhood of $\mathbf{z}$ with corresponding far field pattern
\begin{equation*}
\mathbf{E}_{e,\infty}(\hat{\mathbf{x}},\mathbf{z},\mathbf{q}) := \frac{ik}{4\pi}(\hat{\mathbf{x}}\times\mathbf{q})\times\hat{\mathbf{x}}\, e^{-ik\hat{\mathbf{x}}\cdot\mathbf{z}}.
\end{equation*}
We investigate the \emph{modified far field equation} for $\mathbf{z}\in B$, which is to find $\mathbf{g}\in\mathbf{L}_t^2(\mathbb{S}^2)$ satisfying
\begin{equation}
(\mathcal{F}\mathbf{g})(\hat{\mathbf{x}}) = \mathbf{E}_{e,\infty}(\hat{\mathbf{x}},\mathbf{z},\mathbf{q}). \label{mffe}
\end{equation}
If $\mathbf{g}_\mathbf{z}\in\mathbf{L}_t^2(\mathbb{S}^2)$ satisfies \eqref{mffe}, then we see from Rellich's lemma (cf. \cite{colton_kress}) that
\begin{equation*}
\mathbf{w}_\mathbf{z}(\mathbf{x}) - \mathbf{v}_\mathbf{z}(\mathbf{x}) = \mathbf{w}_\mathbf{z}^s(\mathbf{x}) - \mathbf{v}_\mathbf{z}^s(\mathbf{x}) = \mathbf{E}_e(\mathbf{x},\mathbf{z},\mathbf{q}), \quad \mathbf{x}\in\mathbb{R}^3\setminus\overline{B},
\end{equation*}
where $(\mathbf{w}_\mathbf{z},\mathbf{w}_\mathbf{z}^s)$ and $(\mathbf{v}_\mathbf{z},\mathbf{v}_\mathbf{z}^s,p_\mathbf{z})$ satisfy \eqref{sc} and \eqref{aux}, respectively, with incident field given by the Herglotz wave function $\mathbf{E}^i = \mathbf{v}_{\mathbf{g}_\mathbf{z}}^i$ that we defined in \eqref{herglotz}. It follows that $(\mathbf{w}_\mathbf{z},\mathbf{v}_\mathbf{z},p_\mathbf{z})$ satisfies the modified interior transmission problem
\begin{subequations} \label{zmit}
\begin{align}
\curl\curl\mathbf{w}_\mathbf{z} - k^2\epsilon\mathbf{w}_\mathbf{z} &= \mathbf{0} \text{ in } B, \label{zmit1} \\
\curl\gamma^{-1}\curl\mathbf{v}_\mathbf{z} - k^2\eta\mathbf{v}_\mathbf{z} + k^2\nabla p_\mathbf{z} &= \mathbf{0} \text{ in } B, \label{zmit2} \\
\div\mathbf{v}_\mathbf{z} &= 0 \text{ in } B, \label{zmit3} \\
\un\cdot\mathbf{v}_\mathbf{z} &= 0 \text{ on } \partial B, \label{zmit4} \\
\un\times(\mathbf{w}_\mathbf{z}-\mathbf{v}_\mathbf{z}) &= \un\times\mathbf{E}_e(\cdot,z,\mathbf{q}) \text{ on } \partial B, \label{zmit5} \\
\un\times\left(\curl\mathbf{w}_\mathbf{z} - \gamma^{-1}\curl\mathbf{v}_\mathbf{z}\right) &= \un\times\curl\mathbf{E}_e(\cdot,\mathbf{z},\mathbf{q}) \text{ on } \partial B, \label{zmit6}
\end{align}
\end{subequations}
and that these fields admit the decomposition
\begin{equation}
\mathbf{w}_\mathbf{z} = \mathbf{v}_{\mathbf{g}_\mathbf{z}}^i + \mathbf{w}_\mathbf{z}^s, \quad \mathbf{v}_\mathbf{z} = \mathbf{v}_{\mathbf{g}_\mathbf{z}}^i + \mathbf{v}_\mathbf{z}^s \text{ in } \mathbb{R}^3. \label{decomp}
\end{equation}
Unfortunately, the solution of \eqref{zmit} cannot in general be decomposed as in \eqref{decomp}, but in the following lemma we show that these fields may be decomposed as the sum of an incident field and a radiating field such that the incident fields coincide.

\begin{lemma} \label{lemma_decomp}

If $\eta$ is not a modified transmission eigenvalue, then \eqref{zmit} has a unique solution $(\mathbf{w}_\mathbf{z},\mathbf{v}_\mathbf{z},p_\mathbf{z})\in\Hcurl{B}\times\Hcurl{B}\times H_*^1(B)$ and the fields $\mathbf{w}_\mathbf{z}$, $\mathbf{v}_\mathbf{z}$ may be decomposed as
\begin{equation*}
\mathbf{w}_\mathbf{z} = \mathbf{u}_\mathbf{z}^i + \mathbf{w}_\mathbf{z}^s, \quad \mathbf{v}_\mathbf{z} = \mathbf{u}_\mathbf{z}^i + \mathbf{v}_\mathbf{z}^s,
\end{equation*}
where $\mathbf{u}_\mathbf{z}^i\in\Hcurl{B}$ satisfies the free-space Maxwell's equations in $B$ and $\mathbf{w}_\mathbf{z}^s,\mathbf{v}_\mathbf{z}^s\in\Hcurlloc{\mathbb{R}^3}$ are radiating solutions of the free-space Maxwell's equations outside of $B$.

\end{lemma}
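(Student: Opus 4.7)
The plan naturally splits into establishing unique solvability and then constructing the decomposition.

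For unique solvability, the key observation is that $\mathbf{z}\in B$ implies the dipole field $\mathbf{E}_e(\cdot,\mathbf{z},\mathbf{q})$ is smooth in a neighborhood of $\partial B$, so both $\un\times\mathbf{E}_e(\cdot,\mathbf{z},\mathbf{q})$ and $\un\times\curl\mathbf{E}_e(\cdot,\mathbf{z},\mathbf{q})$ lie in $\mathbf{H}^{-1/2}(\textnormal{Div},\partial B)$. Under the standing hypothesis $\gamma\neq 1$ and the assumption that $\eta$ is not a modified electromagnetic transmission eigenvalue, Theorem \ref{theorem_Fredholm} applied with $\mathbf{f}=\mathbf{0}$, $\mathbf{g}=\mathbf{0}$, $\boldsymbol{\xi}=\un\times\mathbf{E}_e(\cdot,\mathbf{z},\mathbf{q})$, and $\mathbf{h}=\un\times\curl\mathbf{E}_e(\cdot,\mathbf{z},\mathbf{q})$ directly supplies a unique $(\mathbf{w}_\mathbf{z},\mathbf{v}_\mathbf{z},p_\mathbf{z})\in\Hcurl{B}\times\Hcurl{B}\times H_*^1(B)$ together with the bound \eqref{fredholm_est}.

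For the decomposition, I would take $\mathbf{u}_\mathbf{z}^i:=\mathbf{0}$, which trivially satisfies the free-space Maxwell equations in $B$, and produce $\mathbf{w}_\mathbf{z}^s$ and $\mathbf{v}_\mathbf{z}^s$ as radiating exterior extensions. Let $\widetilde{\mathbf{w}}$ and $\widetilde{\mathbf{v}}$ denote the unique radiating solutions of the free-space Maxwell equations in $\mathbb{R}^3\setminus\overline{B}$ with tangential traces $\un\times\mathbf{w}_\mathbf{z}$ and $\un\times\mathbf{v}_\mathbf{z}$ on $\partial B$ respectively; these exist by classical well-posedness of the exterior Maxwell Dirichlet problem with data in $\mathbf{H}^{-1/2}(\textnormal{Div},\partial B)$ (cf.\ \cite{monk}), since $\mathbf{w}_\mathbf{z},\mathbf{v}_\mathbf{z}\in\Hcurl{B}$ ensures these tangential traces lie in the appropriate space. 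Define $\mathbf{w}_\mathbf{z}^s:=\mathbf{w}_\mathbf{z}$ in $B$ and $\mathbf{w}_\mathbf{z}^s:=\widetilde{\mathbf{w}}$ in $\mathbb{R}^3\setminus\overline{B}$, and analogously for $\mathbf{v}_\mathbf{z}^s$; the matching of tangential traces across $\partial B$ places both fields into $\Hcurlloc{\mathbb{R}^3}$, and by construction each is a radiating solution of free-space Maxwell in $\mathbb{R}^3\setminus\overline{B}$. The decomposition then holds identically in $B$. As a useful structural consequence, combining \eqref{zmit5} with uniqueness of radiating exterior Maxwell solutions yields $\mathbf{w}_\mathbf{z}^s-\mathbf{v}_\mathbf{z}^s=\mathbf{E}_e$ in $\mathbb{R}^3\setminus\overline{B}$, since both sides are radiating Maxwell solutions with matching tangential trace $\un\times\mathbf{E}_e$ on $\partial B$ and $\mathbf{z}\in B$ means $\mathbf{E}_e$ is regular on this exterior.

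The only ingredient beyond Theorem \ref{theorem_Fredholm} is the well-posedness of the exterior Maxwell Dirichlet problem, which is standard, so there is no deep analytic obstacle. The lemma is essentially a structural statement whose role is to frame the subsequent linear-sampling arguments in terms of an incident-plus-scattered decomposition compatible with the Herglotz-wave approximation of $\mathbf{u}_\mathbf{z}^i$ that will be used in Section \ref{sec_determine}.
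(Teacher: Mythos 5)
Your solvability step is exactly right and matches the paper: with $\mathbf{f}=\mathbf{0}$, $\mathbf{g}=\mathbf{0}$ and the (smooth, since $\mathbf{z}\in B$) dipole traces as boundary data, Theorem \ref{theorem_Fredholm} supplies the unique solution. The decomposition step, however, has a genuine gap. By setting $\mathbf{u}_\mathbf{z}^i=\mathbf{0}$ and gluing the interior total fields to arbitrary radiating exterior extensions, you satisfy the letter of the statement while emptying it of content: \emph{any} pair of fields in $\Hcurl{B}$ admits such a ``decomposition,'' so nothing about \eqref{zmit} is actually used. The sentence immediately preceding the lemma makes the intent explicit --- the decomposition is meant to be the analogue of \eqref{decomp}, i.e. $\mathbf{w}_\mathbf{z}^s$ and $\mathbf{v}_\mathbf{z}^s$ are to be the scattered fields of the physical and auxiliary scattering problems (the radiating solutions of \eqref{wstar} and \eqref{vstar}) generated by one and the same nontrivial incident field $\mathbf{u}_\mathbf{z}^i\in\Hcurlinc{B}$. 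That is precisely what Theorem \ref{theorem_noteig} consumes: it needs $\boldsymbol{\mathcal{G}}\mathbf{u}_\mathbf{z}^i=\mathbf{E}_{e,\infty}(\cdot,\mathbf{z},\mathbf{q})$, whereas your choice gives $\boldsymbol{\mathcal{G}}\mathbf{0}=\mathbf{0}$, defeating the very purpose you describe in your closing remark. Note also that your $\mathbf{w}_\mathbf{z}^s$ is not the scattered field associated with the incident field $\mathbf{0}$ (that would be $\mathbf{0}$), and $\un\times\curl\mathbf{w}_\mathbf{z}^s$ generally jumps across $\partial B$, so it solves no global scattering problem.

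The missing mathematics is the Stratton--Chu representation. The paper splits $\mathbf{w}_\mathbf{z}=\mathbf{w}_\mathbf{z}^i+\mathbf{w}_\mathbf{z}^s$ with $\mathbf{w}_\mathbf{z}^i$ built from surface potentials of $\un\times\mathbf{w}_\mathbf{z}$, $\un\cdot(\epsilon\mathbf{w}_\mathbf{z})$ and $\un\times\curl\mathbf{w}_\mathbf{z}$ (a free-space Maxwell solution in $B$) and $\mathbf{w}_\mathbf{z}^s$ from volume and surface potentials of $(\epsilon-1)\mathbf{w}_\mathbf{z}$ (a radiating solution outside $B$ which is the physical scattered field for the incident field $\mathbf{w}_\mathbf{z}^i$); an analogous splitting of $\mathbf{v}_\mathbf{z}$ uses $(1-\gamma^{-1})\curl\mathbf{v}_\mathbf{z}$, $(\eta-1)\mathbf{v}_\mathbf{z}$ and $\nabla p_\mathbf{z}$. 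The crux --- and the only nontrivial assertion of the lemma --- is that the two incident parts coincide. Their difference reduces, via the boundary conditions \eqref{zmit5}--\eqref{zmit6} together with the relation $\un\cdot(\epsilon\mathbf{w}_\mathbf{z})+\normal{p_\mathbf{z}}=\un\cdot\mathbf{E}_e(\cdot,\mathbf{z},\mathbf{q})$ on $\partial B$, to surface potentials of the traces of $\mathbf{E}_e(\cdot,\mathbf{z},\mathbf{q})$, and these are shown to vanish by deforming the integration surface to a large sphere and invoking the Silver--M{\"u}ller radiation condition. None of this appears in your proposal, and without it the lemma cannot play its role in Section \ref{sec_determine}.
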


\begin{proof}

Since $\eta$ is not a modified transmission eigenvalue, it follows from Theorem \ref{theorem_Fredholm} that \eqref{zmit} possesses a unique solution which depends continuously on the data. In order to arrive at the desired decompositions of $\mathbf{w}_\mathbf{z}$ and $\mathbf{v}_\mathbf{z}$, we apply the Stratton-Chu representation formula (cf. \cite{colton_kress}). First, we define the incident field
\begin{align*}
\mathbf{w}_\mathbf{z}^i(\mathbf{x}) &:= -\curl\int_{\partial B} \un(\mathbf{y})\times\mathbf{w}_\mathbf{z}(\mathbf{y}) \Phi(\mathbf{x},\mathbf{y}) ds(\mathbf{y}) + \grad\int_{\partial B} \un(\mathbf{y})\cdot[\epsilon(\mathbf{y})\mathbf{w}_\mathbf{z}(\mathbf{y})] \Phi(\mathbf{x},\mathbf{y}) ds(\mathbf{y}) \\
&\hspace{10em} - \int_{\partial B} \un(\mathbf{y})\times\curl\mathbf{w}_\mathbf{z}(\mathbf{y}) \Phi(\mathbf{x},\mathbf{y}) ds(\mathbf{y}), \quad \mathbf{x}\in B,
\end{align*}
and the scattered field
\begin{align*}
\mathbf{w}_\mathbf{z}^s(\mathbf{x}) &:= -\grad\int_{\partial B}\un(\mathbf{y})\cdot[(\epsilon(\mathbf{y})-1)\mathbf{w}_\mathbf{z}(\mathbf{y})] \Phi(\mathbf{x},\mathbf{y}) ds(\mathbf{y}) + \grad\int_B \div[(\epsilon(\mathbf{y})-1)\mathbf{w}_\mathbf{z}(\mathbf{y})] \Phi(\mathbf{x},\mathbf{y}) d\mathbf{y} \\
&\hspace{10em} + k^2\int_B [\epsilon(\mathbf{y})-1]\mathbf{w}_\mathbf{z}(\mathbf{y}) \Phi(\mathbf{x},\mathbf{y}) d\mathbf{y}, \quad \mathbf{x}\in\mathbb{R}^3,
\end{align*}
where we have used the fact that $\div(\epsilon\mathbf{w}_\mathbf{z}) = 0$ in $B$. By similar reasoning to \cite[p. 193]{colton_kress}, the incident field may be written as
\begin{equation*}
\mathbf{w}_\mathbf{z}^i(\mathbf{x}) := -\curl\int_{\partial B} \un(\mathbf{y})\times\mathbf{w}_\mathbf{z}(\mathbf{y}) \Phi(\mathbf{x},\mathbf{y}) ds(\mathbf{y}) - \frac{1}{k^2}\curl\curl\int_{\partial B} \un(\mathbf{y})\times\curl\mathbf{w}_z(\mathbf{y}) \Phi(\mathbf{x},\mathbf{y}) ds(\mathbf{y}), \quad \mathbf{x}\in B,
\end{equation*}
and consequently we see that $\mathbf{w}_\mathbf{z}^i$ satisfies the free-space Maxwell's equations in $B$. Moreover, an application of the identity
\begin{equation*}
\curl\curl = -\boldsymbol{\Delta} + \grad\div
\end{equation*}
and the divergence theorem implies that $\mathbf{w}_\mathbf{z}^s$ is a radiating solution of the free-space Maxwell's equations in $\mathbb{R}^3\setminus\overline{B}$. A similar decomposition appeared in the proof of Lemma 4.1 in \cite{camano_lackner_monk}, but each of the incident and scattered fields is missing a necessary term which is corrected in the above decomposition. Similarly, we define the incident field
\begin{align*}
\mathbf{v}_\mathbf{z}^i(\mathbf{x}) &:= -\curl\int_{\partial B} \un(\mathbf{y})\times\mathbf{v}_\mathbf{z}(\mathbf{y}) \Phi(\mathbf{x},\mathbf{y}) ds(\mathbf{y}) - \grad\int_{\partial B} \un(\mathbf{y})\cdot\nabla p_\mathbf{z}(\mathbf{y}) \Phi(\mathbf{x},\mathbf{y}) ds(\mathbf{y}) \\
&\hspace{10em} - \int_{\partial B} \un(\mathbf{y})\times\curl\mathbf{v}_\mathbf{z}(\mathbf{y}) \Phi(\mathbf{x},\mathbf{y}) ds(\mathbf{y}), \quad \mathbf{x}\in B,
\end{align*}
and the scattered field
\begin{align*}
\mathbf{v}_\mathbf{z}^s(\mathbf{x}) &:= \grad\int_{\partial B}\un(\mathbf{y})\cdot\nabla p_\mathbf{z}(\mathbf{y}) \Phi(\mathbf{x},\mathbf{y}) ds(\mathbf{y}) + \curl\int_B \left(1 - \gamma^{-1}\right)\curl\mathbf{v}_\mathbf{z}(\mathbf{y}) \Phi(\mathbf{x},\mathbf{y}) d\mathbf{y} \\
&\hspace{10em} + k^2\int_B [(\eta - 1)\mathbf{v}_\mathbf{z}(\mathbf{y}) - \nabla p_\mathbf{z}(\mathbf{y})] \Phi(\mathbf{x},\mathbf{y}) d\mathbf{y}, \quad \mathbf{x}\in\mathbb{R}^3,
\end{align*}
where we have used the fact that $\div\mathbf{v}_\mathbf{z} = 0$ in $B$ and $\un\cdot\mathbf{v}_\mathbf{z} = 0$ on $\partial B$. By the same arguments we see that $\mathbf{v}_\mathbf{z}^i$ and $\mathbf{v}_\mathbf{z}^s$ satisfy the free-space Maxwell's equations in $B$ and $\mathbb{R}^3\setminus\overline{B}$, respectively, and the scattered field satisfies the radiation condition. Since $\mathbf{w}_\mathbf{z} = \mathbf{w}_\mathbf{z}^i + \mathbf{w}_\mathbf{z}^s$ and $\mathbf{v}_\mathbf{z} = \mathbf{v}_\mathbf{z}^i + \mathbf{v}_\mathbf{z}^s$ by the Stratton-Chu formula, it remains to prove that $\mathbf{w}_\mathbf{z}^i = \mathbf{v}_\mathbf{z}^i$. By the boundary conditions \eqref{zmit} and the relation
\begin{equation*}
\un\cdot(\epsilon\mathbf{w}_\mathbf{z}) + \normal{p_\mathbf{z}} = \un\cdot\mathbf{E}_e(\cdot,\mathbf{z},\mathbf{q}) \text{ on } \partial B,
\end{equation*}
we see that
\begin{align*}
\mathbf{w}_\mathbf{z}^i(\mathbf{x}) - \mathbf{v}_\mathbf{z}^i(\mathbf{x})  &:= -\curl\int_{\partial B} \un(\mathbf{y})\times\mathbf{E}_e(\mathbf{y},\mathbf{z},\mathbf{q}) \Phi(\mathbf{x},\mathbf{y}) ds(\mathbf{y}) + \grad\int_{\partial B} \un(\mathbf{y})\cdot\mathbf{E}_e(\mathbf{y},\mathbf{z},\mathbf{q}) \Phi(\mathbf{x},\mathbf{y}) ds(\mathbf{y}) \\
&\hspace{10em} - \int_{\partial B} \un(\mathbf{y})\times\curl\mathbf{E}_e(\mathbf{y},\mathbf{z},\mathbf{q}) \Phi(\mathbf{x},\mathbf{y}) ds(\mathbf{y}), \quad \mathbf{x}\in B.
\end{align*}
As we did for the incident field $\mathbf{w}_\mathbf{z}^i$, we may write this difference as
\begin{align*}
\mathbf{w}_\mathbf{z}^i(\mathbf{x}) - \mathbf{v}_\mathbf{z}^i(\mathbf{x})  &:= -\curl\int_{\partial B} \un(\mathbf{y})\times\mathbf{E}_e(\mathbf{y},\mathbf{z},\mathbf{q}) \Phi(\mathbf{x},\mathbf{y}) ds(\mathbf{y}) \\
&\quad\quad\quad\quad\quad- \frac{1}{k^2}\curl\curl\int_{\partial B} \un(\mathbf{y})\times\curl\mathbf{E}_e(\mathbf{y},\mathbf{z},\mathbf{q}) \Phi(\mathbf{x},\mathbf{y}) ds(\mathbf{y}), \quad \mathbf{x}\in B.
\end{align*}
We now consider a ball $B_R$ of radius $R>0$ sufficiently large that $\overline{B}\subset B_R$, and we define the domain $S_R := B_R\setminus\overline{B}$ with boundary $\partial S_R = \partial B_R\cup\partial B$, where the unit normal $\un$ on $\partial B_R$ is directed into the exterior of $B_R$ and the unit normal $\un$ on $\partial B$ is directed into the interior of $S_R$. Since any $\mathbf{x}\in B$ lies outside of $S_R$ we see that
\begin{equation*}
-\curl\int_{\partial S_R} \un(\mathbf{y})\times\mathbf{E}_e(\mathbf{y},\mathbf{z},\mathbf{q}) \Phi(\mathbf{x},\mathbf{y}) ds(\mathbf{y}) - \frac{1}{k^2}\curl\curl\int_{\partial S_R} \un(\mathbf{y})\times\curl\mathbf{E}_e(\mathbf{y},\mathbf{z},\mathbf{q}) \Phi(\mathbf{x},\mathbf{y}) ds(\mathbf{y}) = 0,
\end{equation*}
and consequently we obtain the representation
\begin{align*}
\mathbf{w}_\mathbf{z}^i(\mathbf{x}) - \mathbf{v}_\mathbf{z}^i(\mathbf{x})  &:= -\curl\int_{\partial B_R} \un(\mathbf{y})\times\mathbf{E}_e(\mathbf{y},\mathbf{z},\mathbf{q}) \Phi(\mathbf{x},\mathbf{y}) ds(\mathbf{y}) \\
&\quad\quad\quad\quad\quad- \frac{1}{k^2}\curl\curl\int_{\partial B_R} \un(\mathbf{y})\times\curl\mathbf{E}_e(\mathbf{y},\mathbf{z},\mathbf{q}) \Phi(\mathbf{x},\mathbf{y}) ds(\mathbf{y}), \quad \mathbf{x}\in B,
\end{align*}
for any sufficiently large $R>0$. From the Silver-M{\"u}ller radiation condition satisfied by $\mathbf{E}_e(\cdot,\mathbf{z},q)$ and $\mathbf{H}_e(\cdot,\mathbf{z},q)$ it follows as in \cite[Theorem 6.7]{colton_kress} that this expression vanishes as $R\to\infty$ for any $\mathbf{x}\in B$. We conclude that $\mathbf{w}_\mathbf{z}^i = \mathbf{v}_\mathbf{z}^i$ and hence we may denote both incident fields by $\mathbf{u}_\mathbf{z}^i$. \proofend

\end{proof}

We now factorize the modified far field operator $\boldsymbol{\mathcal{F}}$. We begin by defining the space of generalized incident fields as
\begin{equation*}
\Hcurlinc{B} := \{\mathbf{u}^i\in\Hcurl{B} \mid \curl\curl\mathbf{u}^i - k^2\mathbf{u}^i = \mathbf{0} \text{ in } B\},
\end{equation*}
and we define the Herglotz operator $\mathbf{H}:\mathbf{L}_t^2(\mathbb{S}^2)\to\Hcurlinc{B}$ by $\mathbf{H}\mathbf{g} := \mathbf{v}_\mathbf{g}^i$. We also define two solution operators as follows. We define $\mathbf{G}:\Hcurlinc{B}\to\mathbf{L}_t^2(\mathbb{S}^2)$ by $\mathbf{G}\mathbf{u}^i := \mathbf{w}_\infty^*$, where $\mathbf{w}_\infty^*$ is the far field pattern corresponding to the unique radiating solution $\mathbf{w}^*\in\Hcurlloc{\mathbb{R}^3}$ of
\begin{equation}
\curl\curl\mathbf{w}^* - k^2\epsilon\mathbf{w}^* = k^2(1-\epsilon)\mathbf{u}^i \text{ in } \mathbb{R}^3. \label{wstar}
\end{equation}
We define $\mathbf{G}_0:\Hcurlinc{B}\to\mathbf{L}_t^2(\mathbb{S}^2)$ by $\mathbf{G}_0\mathbf{u}^i := \mathbf{v}_\infty^*$, where $\mathbf{v}_\infty^*$ is the far field pattern corresponding to the unique solution $(\mathbf{v}^*,p^*)\in\Hcurlloc{\mathbb{R}^3}\times H_*^1(B)$ of
\begin{subequations} \label{vstar}
\begin{align}
\curl\tilde{\gamma}^{-1}\curl\mathbf{v}^* - k^2\tilde{\eta}\mathbf{w}^* + k^2\chi_B\nabla p^* &= \curl\left(1-\tilde{\gamma}^{-1}\right)\curl\mathbf{u}^i + k^2(1-\tilde{\eta})\mathbf{u}^i \text{ in } \mathbb{R}^3, \label{vstar1} \\
\div\mathbf{v}^* &= 0 \text{ in } B, \label{vstar2} \\
\un\cdot\mathbf{v}^* &= -\un\cdot\mathbf{u}^i \text{ on } \partial B. \label{vstar3}
\end{align}
\end{subequations}
We denote by $\chi_B$ the characteristic function for the domain $B$, and for any constant $a$ we denote by $\tilde{a}$ the function which has the constant values of $a$ in $B$ and $1$ elsewhere. We see that $\mathbf{F} = \mathbf{G}\mathbf{H}$ and $\mathbf{F}_0 = \mathbf{G}_0 \mathbf{H}$, and consequently we obtain the factorization $\boldsymbol{\mathcal{F}} = \boldsymbol{\mathcal{G}} \mathbf{H}$, where the operator $\boldsymbol{\mathcal{G}}:\Hcurlinc{B}\to\mathbf{L}_t^2(\mathbb{S}^2)$ is defined by $\boldsymbol{\mathcal{G}} := \mathbf{G} - \mathbf{G}_0$ and is compact. With this factorization in hand, we provide a characterization of the modified transmission eigenvalues in terms of $\boldsymbol{\mathcal{F}}$ in the following two theorems.

\begin{theorem} \label{theorem_noteig}

Let $\mathbf{z}\in B$. If $\eta$ is not a modified electromagnetic transmission eigenvalue, then for every $\delta>0$ there exists $\mathbf{g}_z^\delta\in\mathbf{L}_t^2(\mathbb{S}^2)$ satisfying
\begin{equation}
\lim_{\delta\to0} \norm{\boldsymbol{\mathcal{F}}\mathbf{g}_\mathbf{z}^\delta - \mathbf{E}_{e,\infty}(\cdot,\mathbf{z},\mathbf{q})}_{\mathbf{L}_t^2(\mathbb{S}^2)} = 0 \label{F_limit}
\end{equation}
such that the sequence $\left\{\mathbf{v}_{\mathbf{g}_\mathbf{z}^\delta}^i\right\}_{\delta>0}$ of Herglotz wave functions is convergent and hence bounded in $\Hcurl{B}$ as $\delta\to0$.

\end{theorem}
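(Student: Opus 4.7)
My plan is to combine the unique solvability of the modified interior transmission problem \eqref{zmit}, the decomposition from Lemma \ref{lemma_decomp}, and the denseness of Herglotz wave functions in $\Hcurlinc{B}$, exploiting the factorization $\boldsymbol{\mathcal{F}} = \boldsymbol{\mathcal{G}}\mathbf{H}$. The core idea is that $\mathbf{u}_\mathbf{z}^i$, the common interior Maxwell field produced by the solution of \eqref{zmit}, is exactly the incident field that would map (under $\boldsymbol{\mathcal{G}}$) to the far field pattern of the electric dipole, and that we can approximate it arbitrarily well by Herglotz wave functions.

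First I will apply Theorem \ref{theorem_Fredholm} (since $\eta$ is not a modified eigenvalue) to obtain the unique solution $(\mathbf{w}_\mathbf{z},\mathbf{v}_\mathbf{z},p_\mathbf{z})$ of \eqref{zmit}, and then Lemma \ref{lemma_decomp} to write $\mathbf{w}_\mathbf{z} = \mathbf{u}_\mathbf{z}^i + \mathbf{w}_\mathbf{z}^s$ and $\mathbf{v}_\mathbf{z} = \mathbf{u}_\mathbf{z}^i + \mathbf{v}_\mathbf{z}^s$ with a common $\mathbf{u}_\mathbf{z}^i\in\Hcurlinc{B}$ and radiating $\mathbf{w}_\mathbf{z}^s,\mathbf{v}_\mathbf{z}^s$. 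Substituting the decomposition into \eqref{zmit5} gives $\un\times(\mathbf{w}_\mathbf{z}^s - \mathbf{v}_\mathbf{z}^s) = \un\times\mathbf{E}_e(\cdot,\mathbf{z},\mathbf{q})$ on $\partial B$. Since $\mathbf{w}_\mathbf{z}^s - \mathbf{v}_\mathbf{z}^s$ and $\mathbf{E}_e(\cdot,\mathbf{z},\mathbf{q})$ are both radiating solutions of the free-space Maxwell's equations in $\mathbb{R}^3\setminus\overline{B}$, uniqueness of the exterior Dirichlet problem yields $\mathbf{w}_\mathbf{z}^s - \mathbf{v}_\mathbf{z}^s = \mathbf{E}_e(\cdot,\mathbf{z},\mathbf{q})$ in $\mathbb{R}^3\setminus\overline{B}$. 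Identifying $\mathbf{w}_\mathbf{z}^s$ and $\mathbf{v}_\mathbf{z}^s$ (with the sign convention inherent to \eqref{wstar}--\eqref{vstar}) as the fields $\mathbf{w}^*$ and $\mathbf{v}^*$ driven by the incident field $\mathbf{u}_\mathbf{z}^i$, and passing to far field patterns, one concludes $\boldsymbol{\mathcal{G}}\mathbf{u}_\mathbf{z}^i = \mathbf{E}_{e,\infty}(\cdot,\mathbf{z},\mathbf{q})$.

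Next I will invoke the classical denseness of Herglotz wave functions in $\Hcurlinc{B}$ (cf.\ \cite{colton_kress}) to choose, for each $\delta>0$, a density $\mathbf{g}_\mathbf{z}^\delta\in\mathbf{L}_t^2(\mathbb{S}^2)$ satisfying $\norm{\mathbf{v}_{\mathbf{g}_\mathbf{z}^\delta}^i - \mathbf{u}_\mathbf{z}^i}_{\Hcurl{B}} \le \delta$. The boundedness of $\boldsymbol{\mathcal{G}}:\Hcurlinc{B}\to\mathbf{L}_t^2(\mathbb{S}^2)$, which follows from continuous dependence on data in \eqref{wstar}--\eqref{vstar}, then yields
\begin{equation*}
\norm{\boldsymbol{\mathcal{F}}\mathbf{g}_\mathbf{z}^\delta - \mathbf{E}_{e,\infty}(\cdot,\mathbf{z},\mathbf{q})}_{\mathbf{L}_t^2(\mathbb{S}^2)} = \norm{\boldsymbol{\mathcal{G}}\left(\mathbf{v}_{\mathbf{g}_\mathbf{z}^\delta}^i - \mathbf{u}_\mathbf{z}^i\right)}_{\mathbf{L}_t^2(\mathbb{S}^2)} \le C\delta,
\end{equation*}
proving \eqref{F_limit}. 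By construction $\mathbf{v}_{\mathbf{g}_\mathbf{z}^\delta}^i \to \mathbf{u}_\mathbf{z}^i$ in $\Hcurl{B}$, so the Herglotz sequence is convergent and therefore bounded.

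The main obstacle is the identification $\boldsymbol{\mathcal{G}}\mathbf{u}_\mathbf{z}^i = \mathbf{E}_{e,\infty}(\cdot,\mathbf{z},\mathbf{q})$: it requires using Lemma \ref{lemma_decomp} in concert with exterior uniqueness and being careful with the sign conventions in the definitions of $\mathbf{G}$ and $\mathbf{G}_0$ via \eqref{wstar}--\eqref{vstar}. The remaining ingredients—denseness of Herglotz functions, boundedness of $\boldsymbol{\mathcal{G}}$, and the factorization $\boldsymbol{\mathcal{F}} = \boldsymbol{\mathcal{G}}\mathbf{H}$—are standard and combine in a routine way.
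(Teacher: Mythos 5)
Your proposal is correct and follows essentially the same route as the paper's proof: unique solvability of \eqref{zmit} plus the decomposition of Lemma \ref{lemma_decomp} to identify $\mathbf{u}_\mathbf{z}^i$ with $\boldsymbol{\mathcal{G}}\mathbf{u}_\mathbf{z}^i = \mathbf{E}_{e,\infty}(\cdot,\mathbf{z},\mathbf{q})$, then density of the Herglotz range in $\Hcurlinc{B}$ and boundedness of $\boldsymbol{\mathcal{G}}$ via the factorization $\boldsymbol{\mathcal{F}}=\boldsymbol{\mathcal{G}}\mathbf{H}$. The only difference is that you spell out the identification $\boldsymbol{\mathcal{G}}\mathbf{u}_\mathbf{z}^i = \mathbf{E}_{e,\infty}(\cdot,\mathbf{z},\mathbf{q})$ via exterior uniqueness, which the paper simply asserts from the definition of $\boldsymbol{\mathcal{G}}$.
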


\begin{proof}

By the assumption that $\eta$ is not a modified electromagnetic transmission eigenvalue, it follows from Lemma \ref{lemma_decomp} that there exists a unique solution $(\mathbf{w}_\mathbf{z},\mathbf{v}_\mathbf{z},p_\mathbf{z})$ of \eqref{zmit} with the decomposition
\begin{equation*}
\mathbf{w}_\mathbf{z} = \mathbf{u}_\mathbf{z}^i + \mathbf{w}_\mathbf{z}^s, \quad \mathbf{v}_\mathbf{z} = \mathbf{u}_\mathbf{z}^i + \mathbf{v}_\mathbf{z}^s \text{ in } B,
\end{equation*}
where $\mathbf{u}_\mathbf{z}^i\in\Hcurlinc{B}$ and both of the scattered fields $\mathbf{w}_\mathbf{z}^s,\mathbf{v}_\mathbf{z}^s\in\Hcurlloc{\mathbb{R}^3}$ satisfy the free-space Maxwell's equations in $\mathbb{R}^3\setminus\overline{B}$ along with the Silver-M{\"u}ller radiation condition. The definition of $\boldsymbol{\mathcal{G}}$ implies that $\boldsymbol{\mathcal{G}}\mathbf{u}_\mathbf{z}^i = \mathbf{E}_{e,\infty}(\cdot,\mathbf{z},\mathbf{q})$. We note that the range of the Herglotz operator $\mathbf{H}$ is dense in $\Hcurlinc{B}$ (cf. \cite{coltonkress2001}), and consequently for any $\delta>0$ we may choose $\mathbf{g}_\mathbf{z}^\delta\in\mathbf{L}_t^2(\mathbb{S}^2)$ such that
\begin{equation}
\norm{\mathbf{v}_{\mathbf{g}_\mathbf{z}^\delta}^i - \mathbf{u}_\mathbf{z}^i}_{\Hcurl{B}} < \frac{\delta}{\norm{\boldsymbol{\mathcal{G}}}}. \label{Herglotz_limit}
\end{equation}
From the observation that $\boldsymbol{\mathcal{F}}\mathbf{g}_\mathbf{z}^\delta = \boldsymbol{\mathcal{G}}\mathbf{v}_{\mathbf{g}_\mathbf{z}^\delta}^i$ we obtain
\begin{align*}
\norm{\boldsymbol{\mathcal{F}}\mathbf{g}_\mathbf{z}^\delta - \mathbf{E}_{e,\infty}(\cdot,\mathbf{z},\mathbf{q})}_{\mathbf{L}_t^2(\mathbb{S}^2)} &\le \norm{\boldsymbol{\mathcal{G}}}\norm{\mathbf{v}_{\mathbf{g}_\mathbf{z}^\delta}^i - \mathbf{u}_\mathbf{z}^i}_{\Hcurl{B}} < \delta,
\end{align*}
which clearly implies \eqref{F_limit}. Moreover, we see from \eqref{Herglotz_limit} that $\mathbf{v}_{\mathbf{g}_\mathbf{z}^\delta}^i \to \mathbf{u}_\mathbf{z}^i$ in $\Hcurl{B}$ as $\delta\to0$. \proofend

\end{proof}

Before we prove the next theorem, we recall a result from \cite{camano_lackner_monk}. We remark that the original statement of the result in Lemma 4.3 of that work contains an error in the first equation. In particular, the term $\frac{1}{k^2}$ is missing, and we provide the corrected version here.

\begin{lemma} \label{lemma_rep}

For all $\mathbf{z}\in B$, $\mathbf{q}\in\mathbb{R}^3$, and sufficiently regular functions $\mathbf{u}$, we have the identities
\begin{align*}
&\int_{\partial B} \curl\mathbf{u}(\mathbf{x})\cdot\biggr(\un(\mathbf{x})\times\mathbf{E}_e(\mathbf{x},\mathbf{z},\mathbf{q}) \biggr) ds(\mathbf{x}) \\
&\hspace{5em} = ik\mathbf{q}\cdot \left( \frac{1}{k^2}\mathbf{grad}_\mathbf{z}\textnormal{div}_\mathbf{z}\int_{\partial B} \curl\mathbf{u}(\mathbf{x})\times\un(\mathbf{x}) \Phi(\mathbf{x},\mathbf{z}) ds(\mathbf{x}) \right. - \left. \int_{\partial B} \curl\mathbf{u}(\mathbf{x})\times\un(\mathbf{x}) \Phi(\mathbf{x},\mathbf{z}) ds(\mathbf{x}) \right)
\end{align*}
and
\begin{equation*}
\int_{\partial B} \biggr( \un(\mathbf{x})\times\mathbf{u}(\mathbf{x}) \biggr)\cdot\mathbf{curl}_\mathbf{x}\mathbf{E}_e(\mathbf{x},\mathbf{z},\mathbf{q}) ds(\mathbf{x}) = ik\mathbf{q}\cdot\mathbf{curl}_\mathbf{z}\int_{\partial B} \un(\mathbf{x})\times\mathbf{u}(\mathbf{x}) \Phi(\mathbf{x},\mathbf{z}) ds(\mathbf{x}).
\end{equation*}

\end{lemma}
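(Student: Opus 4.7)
The plan is to reduce both identities to direct manipulations of the pointwise dipole representation
\[
\mathbf{E}_e(\mathbf{x},\mathbf{z},\mathbf{q}) = \frac{i}{k}\,\mathbf{grad}_\mathbf{z}\,\textnormal{div}_\mathbf{z}(\mathbf{q}\Phi) + ik\,\mathbf{q}\Phi,\qquad \mathbf{x}\in\partial B,\ \mathbf{z}\in B,
\]
which follows from the vector calculus identity $\curl\curl\mathbf{F} = \nabla(\nabla\cdot\mathbf{F}) - \Delta\mathbf{F}$ applied to $\mathbf{F} = \mathbf{q}\Phi$, the Helmholtz equation $\Delta_\mathbf{x}\Phi = -k^2\Phi$ off the diagonal, and the observation that two applications of $\partial_{x_i} = -\partial_{z_i}$ (valid because $\Phi$ depends only on $\mathbf{x}-\mathbf{z}$) convert $\nabla_\mathbf{x}\,\nabla_\mathbf{x}\cdot$ to $\nabla_\mathbf{z}\,\nabla_\mathbf{z}\cdot$ without sign change. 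Once this representation is established, both identities become exercises in the scalar triple product and in interchanging $\mathbf{z}$-differentiation with $\mathbf{x}$-integration.

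For the first identity I would rearrange the LHS via the scalar triple product as $\int_{\partial B}\mathbf{E}_e\cdot(\curl\mathbf{u}\times\un)\,ds$, substitute the representation above, and treat the two pieces separately. The $ik\,\mathbf{q}\Phi$ piece immediately produces the term proportional to $\int_{\partial B}(\curl\mathbf{u}\times\un)\Phi\,ds$. For the other piece I would apply the pointwise rearrangement
\[
\mathbf{grad}_\mathbf{z}\,\textnormal{div}_\mathbf{z}(\mathbf{q}\Phi)\cdot\mathbf{a} = \mathbf{q}\cdot\mathbf{grad}_\mathbf{z}\,\textnormal{div}_\mathbf{z}(\mathbf{a}\Phi),
\]
valid whenever $\mathbf{a}$ is independent of $\mathbf{z}$ (both sides expand to $\sum_{i,j}a_i q_j\,\partial_{z_i}\partial_{z_j}\Phi$, using that $\mathbf{q}$ is constant and mixed partials commute), in order to pull $\mathbf{q}\cdot$ out of the integrand; commuting $\mathbf{grad}_\mathbf{z}\,\textnormal{div}_\mathbf{z}$ past the surface integral—justified since $\Phi(\cdot,\mathbf{z})$ and all its $\mathbf{z}$-derivatives are smooth on $\partial B$ with bounds uniform for $\mathbf{z}$ in compact subsets of $B$—then delivers the $(1/k^2)\mathbf{grad}_\mathbf{z}\,\textnormal{div}_\mathbf{z}$ term.

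For the second identity I would exploit the Maxwell structure $\mathbf{curl}_\mathbf{x}\mathbf{E}_e = ik\,\mathbf{H}_e = ik\,\mathbf{curl}_\mathbf{x}(\mathbf{q}\Phi) = ik(\nabla_\mathbf{x}\Phi\times\mathbf{q})$ off the diagonal, use $\nabla_\mathbf{x}\Phi = -\nabla_\mathbf{z}\Phi$ to rewrite this as $ik(\mathbf{q}\times\nabla_\mathbf{z}\Phi)$, and then apply the scalar triple product to obtain
\[
(\un\times\mathbf{u})\cdot\mathbf{curl}_\mathbf{x}\mathbf{E}_e = ik\,\mathbf{q}\cdot\bigl(\nabla_\mathbf{z}\Phi\times(\un\times\mathbf{u})\bigr) = ik\,\mathbf{q}\cdot\mathbf{curl}_\mathbf{z}\bigl((\un\times\mathbf{u})\Phi\bigr),
\]
where the last step uses $\curl(f\mathbf{A}) = \nabla f\times\mathbf{A}$ whenever $\mathbf{A}$ is independent of the differentiation variable. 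Integrating over $\partial B$ and pulling $\mathbf{q}\cdot\mathbf{curl}_\mathbf{z}$ outside the integral completes the derivation. The only real obstacle is careful sign bookkeeping when translating between $\mathbf{x}$- and $\mathbf{z}$-differentiation; interchanging $\mathbf{z}$-derivatives with the surface integral is automatic from the smoothness of $\Phi$ away from its singularity.
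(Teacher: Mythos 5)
Your overall strategy is sound and, for the second identity, your argument is complete and correct: the chain $\mathbf{curl}_\mathbf{x}\mathbf{E}_e = ik\,\mathbf{curl}_\mathbf{x}(\mathbf{q}\Phi) = ik\,\mathbf{q}\times\nabla_\mathbf{z}\Phi$ off the diagonal, the triple-product rearrangement, and $\mathbf{curl}_\mathbf{z}(\Phi\mathbf{A}) = \nabla_\mathbf{z}\Phi\times\mathbf{A}$ for $\mathbf{z}$-independent $\mathbf{A}$ give exactly the stated formula. (The paper offers no proof of this lemma; it only cites Lemma 4.3 of the reference and corrects a missing factor $1/k^2$, so your self-contained derivation supplies more than the paper does.)

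For the first identity, however, there is a sign you have glossed over. Your representation $\mathbf{E}_e = \tfrac{i}{k}\mathbf{grad}_\mathbf{z}\textnormal{div}_\mathbf{z}(\mathbf{q}\Phi) + ik\,\mathbf{q}\Phi$ is correct (the term $+ik\,\mathbf{q}\Phi$ arises from $-\Delta_\mathbf{x}(\mathbf{q}\Phi) = +k^2\mathbf{q}\Phi$), and carrying your argument through yields
\[
\int_{\partial B}\curl\mathbf{u}\cdot(\un\times\mathbf{E}_e)\,ds = ik\,\mathbf{q}\cdot\Bigl(\tfrac{1}{k^2}\mathbf{grad}_\mathbf{z}\textnormal{div}_\mathbf{z}\int_{\partial B}(\curl\mathbf{u}\times\un)\Phi\,ds \;+\;\int_{\partial B}(\curl\mathbf{u}\times\un)\Phi\,ds\Bigr),
\]
with a \emph{plus} sign on the second term, whereas the statement you were asked to prove has a minus. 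By writing only that the $ik\,\mathbf{q}\Phi$ piece produces ``the term proportional to'' the surface integral, you avoided committing to the sign; a finished proof must either obtain the minus or explain why it cannot be obtained. In fact the plus sign is the correct one: it is what the paper actually uses two displays later in \eqref{Lambda_normal} and in the definition of $\boldsymbol{\Lambda}$, and it is the only choice for which $\tfrac{1}{k^2}\mathbf{grad}\,\textnormal{div}\,\mathbf{A}\pm\mathbf{A}$ equals $\tfrac{1}{k^2}\curl\curl\mathbf{A}$ (using $\Delta\mathbf{A}+k^2\mathbf{A}=\mathbf{0}$ componentwise) and hence makes $\boldsymbol{\Lambda}$ a solution of Maxwell's equations, as the subsequent argument requires. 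So the minus in the displayed lemma is a typo rather than a flaw in your reasoning, but your write-up should state the resulting sign explicitly and flag the discrepancy instead of leaving it ambiguous.
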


\begin{theorem} \label{theorem_eig}

If $\eta$ is a modified electromagnetic transmission eigenvalue and the sequence $\{\mathbf{g}_\mathbf{z}^\delta\}_{\delta>0}$ satisfies \eqref{F_limit} for a given $\mathbf{z}\in B$, then the sequence $\{\mathbf{v}_{\mathbf{g}_\mathbf{z}^\delta}^i\}$ cannot be bounded in $\Hcurl{B}$ as $\delta\to0$ for almost every $\mathbf{z}\in B_\rho$, where $B_\rho\subset B$ is an arbitrary ball of radius $\rho>0$.

\end{theorem}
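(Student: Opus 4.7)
The plan is to argue by contradiction. Suppose the conclusion fails, so that there exists a measurable set $E\subset B_\rho$ of positive Lebesgue measure such that for every $\mathbf{z}\in E$ the sequence $\{\mathbf{v}_{\mathbf{g}_\mathbf{z}^\delta}^i\}_{\delta>0}$ is bounded in $\Hcurl{B}$. Fix such a $\mathbf{z}$. By weak compactness I would extract a (non-relabelled) subsequence with $\mathbf{v}_{\mathbf{g}_\mathbf{z}^\delta}^i\rightharpoonup\mathbf{u}_\mathbf{z}^i$ in $\Hcurl{B}$, and since every Herglotz wave function solves the free-space Maxwell system, passage to the limit yields $\mathbf{u}_\mathbf{z}^i\in\Hcurlinc{B}$.

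Next I would invoke the factorization $\boldsymbol{\mathcal{F}}=\boldsymbol{\mathcal{G}}\mathbf{H}$ and the compactness of $\boldsymbol{\mathcal{G}}:\Hcurlinc{B}\to\mathbf{L}_t^2(\mathbb{S}^2)$ to convert weak convergence of the input into strong convergence of the output. Combined with \eqref{F_limit}, this forces $\boldsymbol{\mathcal{G}}\mathbf{u}_\mathbf{z}^i=\mathbf{E}_{e,\infty}(\cdot,\mathbf{z},\mathbf{q})$. Letting $\mathbf{w}^*$ and $(\mathbf{v}^*,p^*)$ denote the fields produced by $\mathbf{u}_\mathbf{z}^i$ through \eqref{wstar} and \eqref{vstar}, the far field of $\mathbf{w}^*-\mathbf{v}^*$ equals $\mathbf{E}_{e,\infty}(\cdot,\mathbf{z},\mathbf{q})$, and Rellich's lemma together with unique continuation would give $\mathbf{w}^*-\mathbf{v}^*=\mathbf{E}_e(\cdot,\mathbf{z},\mathbf{q})$ in $\mathbb{R}^3\setminus(\overline{B}\cup\{\mathbf{z}\})$. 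Forming the appropriate incident-plus-scattered combinations of $\mathbf{u}_\mathbf{z}^i$ with $\mathbf{w}^*$ and $\mathbf{v}^*$ inside $B$, I would obtain a triple $(\mathbf{w}_\mathbf{z},\mathbf{v}_\mathbf{z},p^*)\in\Hcurl{B}\times\Hcurl{B}\times H_*^1(B)$ that solves the nonhomogeneous modified interior transmission problem \eqref{zmit}.

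Because $\eta$ is a modified electromagnetic transmission eigenvalue, Theorem \ref{theorem_Fredholm} and the Fredholm alternative then imply that the boundary data $\un\times\mathbf{E}_e(\cdot,\mathbf{z},\mathbf{q})$ and $\un\times\curl\mathbf{E}_e(\cdot,\mathbf{z},\mathbf{q})$ must be orthogonal to every eigenfunction $(\mathbf{W},\mathbf{V},P)$ of the adjoint problem to \eqref{mit}. Integration by parts reduces this compatibility relation to a scalar identity involving surface integrals of $\un\times\mathbf{W}$ and $\un\times\curl\mathbf{W}$ against $\mathbf{E}_e(\cdot,\mathbf{z},\mathbf{q})$ and its curl, and Lemma \ref{lemma_rep} expresses the resulting quantity in the form $\mathbf{q}\cdot\boldsymbol{\Psi}_{\mathbf{W}}(\mathbf{z})=0$, where $\boldsymbol{\Psi}_{\mathbf{W}}$ is a $\mathbb{C}^3$-valued combination of volume and surface potentials of $\mathbf{W}$ against $\Phi(\cdot,\mathbf{z})$ that is real-analytic in $\mathbf{z}\in B$.

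Since $\mathbf{q}\in\mathbb{R}^3$ is arbitrary and the identity holds on the positive-measure set $E$, analytic continuation would force $\boldsymbol{\Psi}_{\mathbf{W}}\equiv\mathbf{0}$ throughout $B$. Standard jump relations applied to the boundary potential representing $\boldsymbol{\Psi}_{\mathbf{W}}$ would then produce $\un\times\mathbf{W}=\mathbf{0}$ and $\un\times\curl\mathbf{W}=\mathbf{0}$ on $\partial B$, so extending $\mathbf{W}$ by zero across $\partial B$ yields a global weak solution of a Maxwell-type equation, and the unique continuation principle (justified by our regularity hypothesis on $\epsilon$) forces $\mathbf{W}\equiv\mathbf{0}$ in $B$. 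The remaining equations of \eqref{mit} then collapse to show that $(\mathbf{V},P)$ also vanishes, contradicting the nontriviality of the eigenfunction. The principal obstacle I expect is identifying the correct adjoint eigenvalue problem to \eqref{mit}, phrasing its compatibility condition so that Lemma \ref{lemma_rep} produces a tractable analytic function of $\mathbf{z}$, and then extracting the full Cauchy data $\un\times\mathbf{W}=\un\times\curl\mathbf{W}=\mathbf{0}$ cleanly from the resulting identities.
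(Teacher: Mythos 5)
Your overall architecture matches the paper's: argue by contradiction, extract a weak limit $\mathbf{u}_\mathbf{z}^i$ of the Herglotz waves, use compactness of $\boldsymbol{\mathcal{G}}$ together with \eqref{F_limit} to get $\boldsymbol{\mathcal{G}}\mathbf{u}_\mathbf{z}^i=\mathbf{E}_{e,\infty}(\cdot,\mathbf{z},\mathbf{q})$, build from \eqref{wstar}--\eqref{vstar} and Rellich's lemma a solution of \eqref{zmit}, reduce to an identity of the form $\mathbf{q}\cdot\boldsymbol{\Lambda}(\mathbf{z})=0$ on $B_\rho$ via Lemma \ref{lemma_rep}, and kill the eigenfunction by unique continuation and jump relations. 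However, the step you yourself flag as the principal obstacle is where your route genuinely diverges and is left open: you propose to obtain the key surface-integral identity from the Fredholm alternative, i.e.\ as a compatibility condition of the data $\un\times\mathbf{E}_e$, $\un\times\curl\mathbf{E}_e$ against eigenfunctions of an (unidentified) adjoint of \eqref{mit}. The paper never introduces the adjoint. Instead it takes the constructed solution $(\mathbf{w}_\mathbf{z},\mathbf{v}_\mathbf{z},p_\mathbf{z})$ of \eqref{zmit} and a nontrivial eigenfunction $(\mathbf{w}_\eta,\mathbf{v}_\eta,p_\eta)$ of \eqref{mit} itself, and applies Green's \emph{second vector theorem} (a bilinear, unconjugated pairing) separately to the pairs $(\mathbf{w}_\mathbf{z},\mathbf{w}_\eta)$ and $(\mathbf{v}_\mathbf{z},\mathbf{v}_\eta)$: since each pair satisfies the same non-conjugated PDE (and the $\nabla p$ contributions drop out because the $\mathbf{v}$'s are divergence-free with vanishing normal trace), the volume terms cancel and subtracting the two boundary identities, using \eqref{zmit5}--\eqref{zmit6} and \eqref{mit5}--\eqref{mit6}, gives directly
\begin{equation*}
\int_{\partial B}\Bigl(\un\times\mathbf{E}_e(\cdot,\mathbf{z},\mathbf{q})\cdot\curl\mathbf{w}_\eta-\un\times\mathbf{w}_\eta\cdot\curl\mathbf{E}_e(\cdot,\mathbf{z},\mathbf{q})\Bigr)\,ds=0 .
\end{equation*}
This is exactly the relation you hoped the adjoint compatibility condition would deliver, but with $\mathbf{w}_\eta$ the eigenfunction of the original problem, so there is nothing to identify. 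Without this (or a worked-out adjoint), your argument does not close.

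A second, smaller gap: the jump relations for the vector potentials do \emph{not} by themselves produce $\un\times\mathbf{w}_\eta=\mathbf{0}$ and $\un\times\curl\mathbf{w}_\eta=\mathbf{0}$ on $\partial B$. Once $\boldsymbol{\Lambda}\equiv\mathbf{0}$ in $B$, the jump relations yield $\un\times\boldsymbol{\Lambda}^{+}=-\un\times\mathbf{w}_\eta$ and $\un\times\curl\boldsymbol{\Lambda}^{+}=-\un\times\curl\mathbf{w}_\eta$; one must then observe that $\mathbf{E}^s:=-\boldsymbol{\Lambda}$ in the exterior and $\mathbf{E}:=\mathbf{w}_\eta$ in $B$ together solve the homogeneous physical scattering problem \eqref{sc} (with $B$ in place of $D$ and $\mathbf{E}^i=\mathbf{0}$), and invoke its well-posedness to conclude both vanish. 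Your "extend by zero and apply unique continuation" presupposes the vanishing Cauchy data you are trying to prove. The final step — transferring the conclusion to $(\mathbf{v}_\eta,p_\eta)$ via the transmission conditions and the uniqueness argument for the auxiliary problem — is as in the paper.
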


\begin{proof}

We suppose to the contrary that for some ball $B_\rho\subset B$ and all $\mathbf{z}\in B_\rho$ the sequence $\left\{\mathbf{v}_{\mathbf{g}_\mathbf{z}^\delta}^i\right\}$ is bounded in $\Hcurl{B}$ as $\delta\to0$, which implies that, upon passing to a subsequence, $\left\{\mathbf{v}_{\mathbf{g}_\mathbf{z}^\delta}^i\right\}$ converges weakly to some $\mathbf{u}_\mathbf{z}^i$ in $\Hcurlinc{B}$. By compactness of $\boldsymbol{\mathcal{G}}$ we obtain
\begin{equation*}
\boldsymbol{\mathcal{G}}\mathbf{v}_{\mathbf{g}_\mathbf{z}^\delta}^i \to \boldsymbol{\mathcal{G}}\mathbf{u}_\mathbf{z}^i \text{ in } \mathbf{L}_t^2(\mathbb{S}^2) \text{ as } \delta\to0,
\end{equation*}
and it follows from \eqref{F_limit} that $\boldsymbol{\mathcal{G}}\mathbf{u}_\mathbf{z}^i = \mathbf{E}_{e,\infty}(\cdot,\mathbf{z},\mathbf{q})$. If we let $\mathbf{w}_\mathbf{z}^*\in\Hcurlloc{\mathbb{R}^3}$ and $(\mathbf{v}_\mathbf{z}^*,p_\mathbf{z}^*)\in\Hcurlloc{\mathbb{R}^3}\times H_*^1(B)$ be the unique solutions of \eqref{wstar} and \eqref{vstar}, respectively, for the incident field $\mathbf{u}_\mathbf{z}^i$, then we see by definition of $\boldsymbol{\mathcal{G}}$ that
\begin{equation*}
\mathbf{w}_{\mathbf{z},\infty}^* - \mathbf{v}_{\mathbf{z},\infty}^* = \mathbf{E}_{e,\infty}(\cdot,\mathbf{z},\mathbf{q}).
\end{equation*}
An application of Rellich's lemma implies that $(\mathbf{w}_\mathbf{z},\mathbf{v}_\mathbf{z},p_\mathbf{z}) := (\mathbf{w}_\mathbf{z}^*|_B + \mathbf{u}_\mathbf{z}^i,\mathbf{v}_\mathbf{z}^*|_B + \mathbf{u}_\mathbf{z}^i,p_\mathbf{z})$ satisfies \eqref{zmit}. Since $\eta$ is a modified electromagnetic transmission eigenvalue, there exists a nontrivial solution $(\mathbf{w}_\eta,\mathbf{v}_\eta,p_\eta)$ of \eqref{mit}. We may apply Green's second vector theorem as in \cite{colton_kress} along with some simple calculations to obtain
\begin{align*}
\int_{\partial B} \left( \un\times\mathbf{v}_\mathbf{z}\cdot\gamma^{-1}\curl\mathbf{v}_\eta - \un\times\mathbf{v}_\eta\cdot\gamma^{-1}\curl\mathbf{v}_\mathbf{z} \right) ds &= 0, \\
\int_{\partial B} \left( \un\times\mathbf{w}_\mathbf{z}\cdot\curl\mathbf{w}_\eta - \un\times\mathbf{w}_\eta\cdot\curl\mathbf{w}_\mathbf{z} \right) ds &= 0,
\end{align*}
and upon subtracting these equations and applying the boundary conditions \eqref{zmit5}--\eqref{zmit6} and \eqref{mit5}--\eqref{mit6} we see that
\begin{equation*}
\int_{\partial B} \biggr( \un\times\mathbf{E}_e(\cdot,\mathbf{z},\mathbf{q})\cdot\curl\mathbf{w}_\eta - \un\times\mathbf{w}_\eta\cdot\curl\mathbf{E}_e(\cdot,\mathbf{z},\mathbf{q}) \biggr) ds = 0.
\end{equation*}
We now invoke the identities of Lemma \ref{lemma_rep} to write this equation as
\begin{align}
\begin{split}
ik\mathbf{q}\cdot \Bigr( \frac{1}{k^2}\textbf{grad}_\mathbf{z}\textnormal{div}_\mathbf{z}&\int_{\partial B} \curl\mathbf{w}_\eta\times\un \Phi(\cdot,\mathbf{z}) ds + \int_{\partial B} \curl\mathbf{w}_\eta\times\un \Phi(\cdot,\mathbf{z}) ds \\
&\hspace{12em} - \textbf{curl}_\mathbf{z}\int_{\partial B} \un\times\mathbf{w}_\eta \Phi(\cdot,\mathbf{z}) ds \Bigr) = 0, \; \mathbf{z}\in B_\rho. \label{Lambda_normal}
\end{split}
\end{align}
We define the function
\begin{align*}
\boldsymbol{\Lambda}(\mathbf{z}) := \frac{1}{k^2}&\textbf{grad}_\mathbf{z}\textnormal{div}_\mathbf{z}\int_{\partial B} \curl\mathbf{w}_\eta\times\un \Phi(\cdot,\mathbf{z}) ds + \int_{\partial B} \curl\mathbf{w}_\eta\times\un \Phi(\cdot,\mathbf{z}) ds \\
&\hspace{12em} - \textbf{curl}_\mathbf{z}\int_{\partial B} \un\times\mathbf{w}_\eta \Phi(\cdot,\mathbf{z}) ds, \; \mathbf{z}\in\mathbb{R}^3\setminus\partial B,
\end{align*}
and we observe that $\boldsymbol{\Lambda}$ satisfies the free-space Maxwell's equations in both $\mathbb{R}^3\setminus\overline{B}$ and $B$ and that it satisfies the radiation condition. From \eqref{Lambda_normal} we see that $ik\mathbf{q}\cdot\boldsymbol{\Lambda}(\mathbf{z}) = 0$ for all $\mathbf{z}\in B_\rho$ and all $\mathbf{q}\in\mathbb{R}^3$, and the unique continuation principle implies that $\boldsymbol{\Lambda}(\mathbf{z}) = 0$ for all $\mathbf{z}\in B$. It follows that
\begin{equation*}
\un\times\boldsymbol{\Lambda}^- = \mathbf{0} \text{ and } \un\times\curl\boldsymbol{\Lambda}^- = \mathbf{0} \text{ on } \partial B,
\end{equation*}
where the superscripts $+$ and $-$ denote the trace on $\partial B$ from the exterior and interior of $B$, respectively. The jump relations of vector potentials (cf. \cite[Theorem 6.12]{colton_kress}) then imply that
\begin{equation*}
\un\times\boldsymbol{\Lambda}^+ = -\un\times\mathbf{w}_\eta \text{ and } \un\times\curl\boldsymbol{\Lambda}^+ = -\un\times\curl\mathbf{w}_\eta \text{ on } \partial B,
\end{equation*}
from which it follows that $\mathbf{E}^s := -\boldsymbol{\Lambda}\in\Hcurlloc{\mathbb{R}^3\setminus\overline{B}}$ and $\mathbf{E} := \mathbf{w}_\eta\in\Hcurl{B}$ satisfy \eqref{sc} with $B$ in place of $D$ and $\mathbf{E}^i = 0$. (Note that this problem is equivalent to \eqref{sc} upon redefining the total and scattered fields since $\epsilon=1$ in $B\setminus\overline{D}$.) Since this problem is well-posed we conclude that both $\boldsymbol{\Lambda}$ and $\mathbf{w}_\eta$ are identically zero, which by the boundary conditions \eqref{mit} implies that
\begin{equation*}
\un\times\mathbf{v}_\eta = \mathbf{0} \text{ and } \un\times\gamma^{-1}\curl\mathbf{v}_\eta = \mathbf{0} \text{ on } \partial B.
\end{equation*}
The same arguments used in the proof of uniqueness of the auxiliary problem \eqref{aux} yield $\mathbf{v}_\eta = \mathbf{0}$ and $p_\eta = 0$ in $B$, which contradicts the assumption that the solution $(\mathbf{w}_\eta,\mathbf{v}_\eta,p_\eta)$ of the homogeneous modified interior transmission problem was nonzero. \proofend

\end{proof}

\section{Numerical examples} \label{sec_numerics}

We begin this section with a brief explanation of how the results of Theorems \ref{theorem_noteig} and \ref{theorem_eig} allow us to detect modified electromagnetic transmission eigenvalues from electric far field data via the linear sampling method (LSM). We begin with the measured scattering data represented by the electric far field operator $\mathbf{F}$, and we select a rectangular region in the complex plane (or an interval on the real line if $\epsilon$ is real-valued) and construct a grid of values of $\eta$ in which to seek eigenvalues. For each such $\eta$, we compute the auxiliary far field operator $\mathbf{F}_0$ and construct the modified far field operator $\boldsymbol{\mathcal{F}}$. In practice these operators are discretized by computing electric far field patterns for various choices of incident direction $\mathbf{d}$ on the unit sphere. We use Tikhonov regularization to approximately solve a discretized version of the modified far field equation \eqref{mffe} for multiple choices of source points $\mathbf{z}$ and polarization vectors $\mathbf{q}$. In each case we compute the $\mathbf{L}_t^2(\mathbb{S}^2)$-norm of the solution $\mathbf{g}_{\eta,\mathbf{z},\mathbf{q}}$ (which serves as a proxy for the $\Hcurl{B}$-norm of $\mathbf{v}_{\mathbf{g}_{\eta,\mathbf{z},\mathbf{q}}}^i$), average it over all $\mathbf{z}$ and $\mathbf{q}$ to obtain a number $g_\eta$, and investigate the contour plot of the LSM indicator function $\eta\mapsto g_\eta$. Theorems \ref{theorem_noteig} and \ref{theorem_eig} suggest that the number $g_\eta$ should be large when $\eta$ is an eigenvalue and small otherwise. Thus, we determine eigenvalues by seeking peaks in the contour plot of the LSM indicator function. We refer to \cite{camano_lackner_monk} for another implementation of this method to determine eigenvalues related to electromagnetic scattering. \par

We now provide a simple example that modified electromagnetic transmission eigenvalues can be detected from simulated far field data and that they shift in response to changes in the permittivity $\epsilon$. For convenience we choose $D$ to be the unit ball in $\mathbb{R}^3$, $B=D$, and $\epsilon$ to be a real constant in $D$, and we choose the wave number as $k=2$ and the number of incident fields as $N_{\text{inc}} = 99$. In this case both the physical scattering problem \eqref{sc} and the auxiliary problem \eqref{aux} may be computed via separation of variables, and we may compute exact eigenvalues in the same manner for the problem \eqref{mit}. We refer to the Appendix for a discussion of this procedure, and we note that the numerical evaluation of the vector spherical harmonics in the series expansions is based on \cite{wang_wang_xie}. Before we continue to an example, we remark that a root-finding algorithm is used to determine the roots of the modified determinant functions given by \eqref{moddet}, which we refer to as the exact eigenvalues. \par

In Figure \ref{fig:shift} we provide an example of both the detection of modified electromagnetic transmission eigenvalues and their shift due to changes in $\epsilon$ for the choices $\gamma = 0.5$ (left column) and $\gamma = 2$ (right column). We have shown a wide range of $\eta$-values in the top row, and in the bottom row we have restricted the plot to a smaller interval in order to highlight the noticeable shift in the most sensitive eigenvalues. The physical and auxiliary electric far field patterns have been computed using separation of variables, and the resulting modified electric far field operator has been subjected to approximatly 2\% multiplicative uniform noise (see \cite{cogar_colton_meng_monk} for details). For both $\gamma=0.5$ and $\gamma=2$ we observe that multiple eigenvalues are detected in this range and that these eigenvalues shift in response to a change from $\epsilon = 2$ to $\epsilon = 1.9$, suggesting that this class of eigenvalues has the potential to be useful in detecting flaws in a material given electromagnetic scattering data. In this simple example we do not see a significant difference between $\gamma=0.5$ and $\gamma=2$; in both cases the most sensitive eigenvalues shift by a comparable amount. However, it has been observed for scalar problems of a similar type that some choices of $\gamma$ can increase this sensitivity to changes in the material, sometimes by an order of magnitude (cf. \cite{cogar,cogar_colton_meng_monk,cogar_colton_monk}). We plan to investigate this question and the broader behavior of these eigenvalues for more complicated domains in a forthcoming manuscript.

% LSM example for the unit ball
\begin{figure}
\begin{subfigure}{.5\textwidth}
\centering
\includegraphics[width=\linewidth]{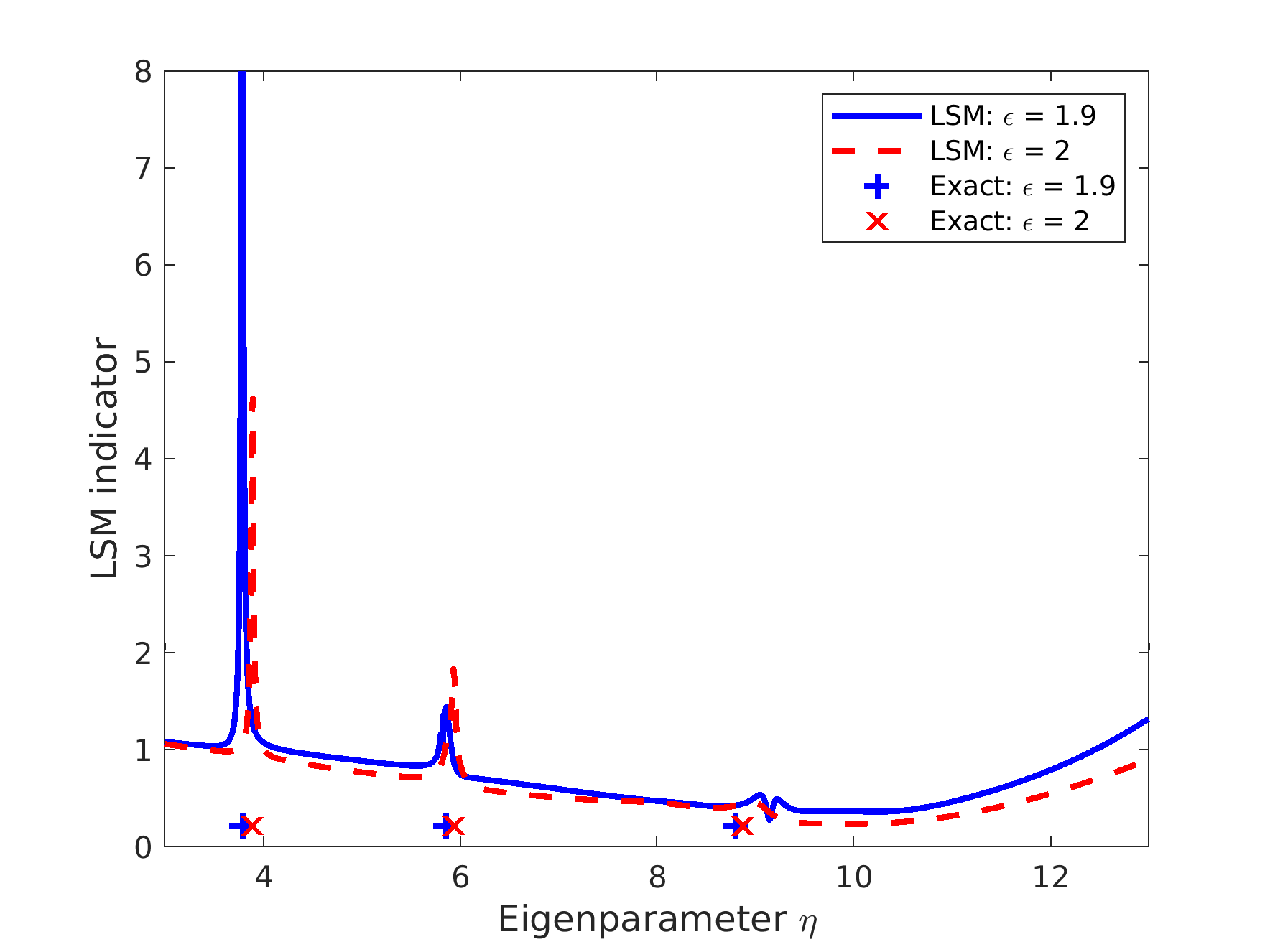}
\caption{$\gamma = 0.5$}
\label{fig:shift_gammap5_wide}
\end{subfigure}
\begin{subfigure}{.5\textwidth}
\centering
\includegraphics[width=\linewidth]{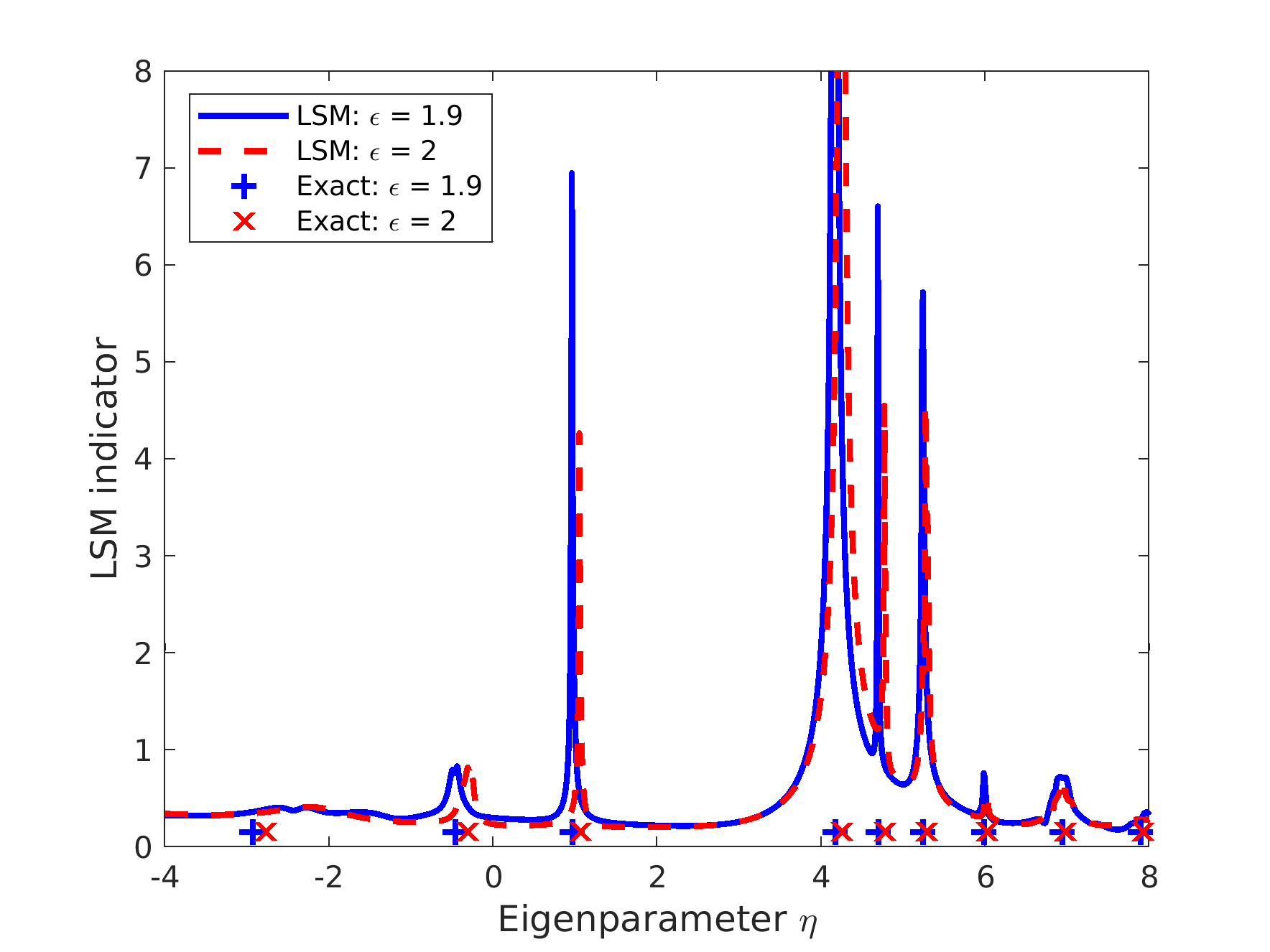}
\caption{$\gamma = 2$}
\label{fig:shift_gamma2_wide}
\end{subfigure}
\begin{subfigure}{.5\textwidth}
\centering
\includegraphics[width=\linewidth]{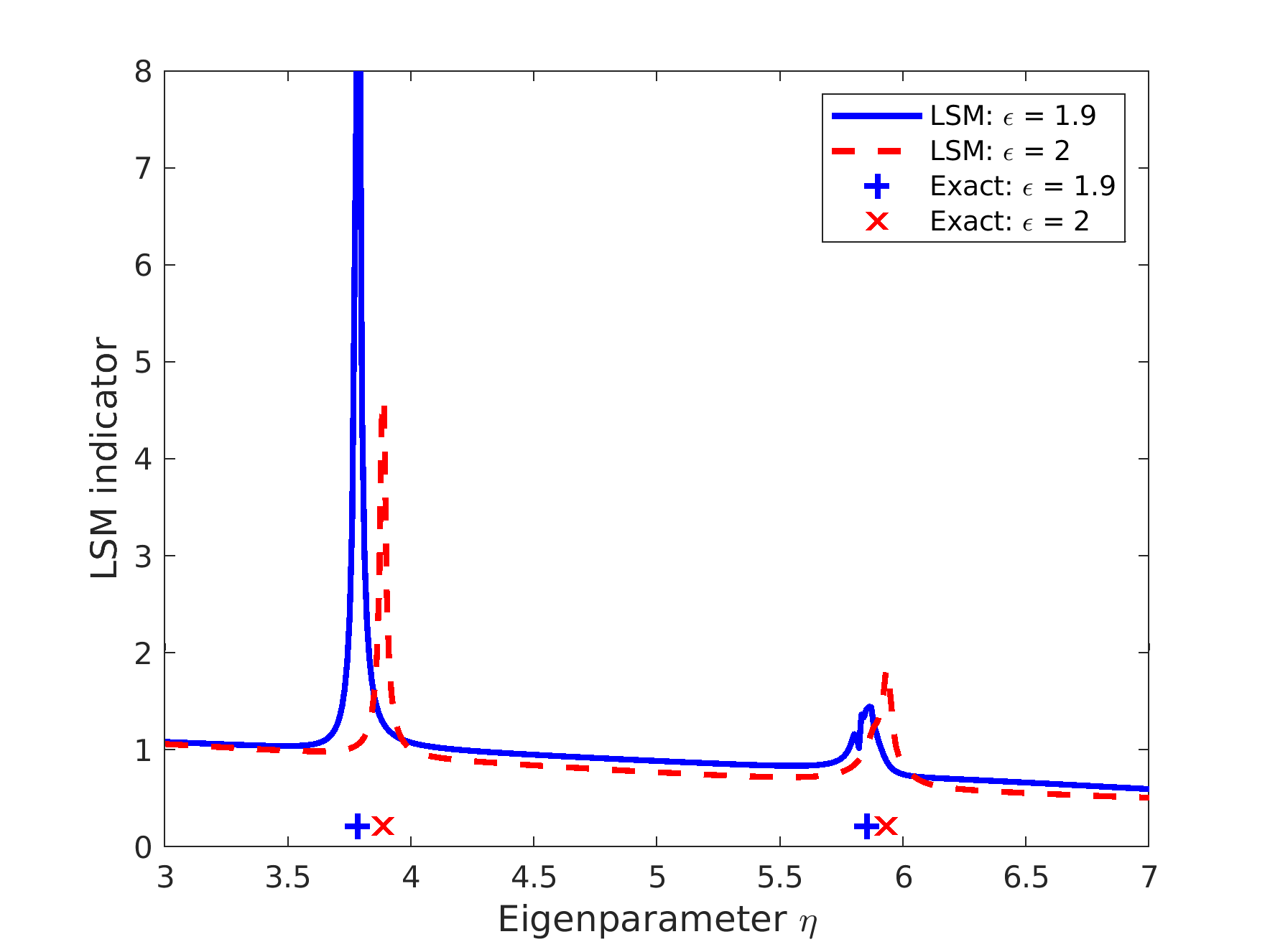}
\caption{$\gamma = 0.5$}
\label{fig:shift_gammap5_zoom}
\end{subfigure}
\begin{subfigure}{.5\textwidth}
\centering
\includegraphics[width=\linewidth]{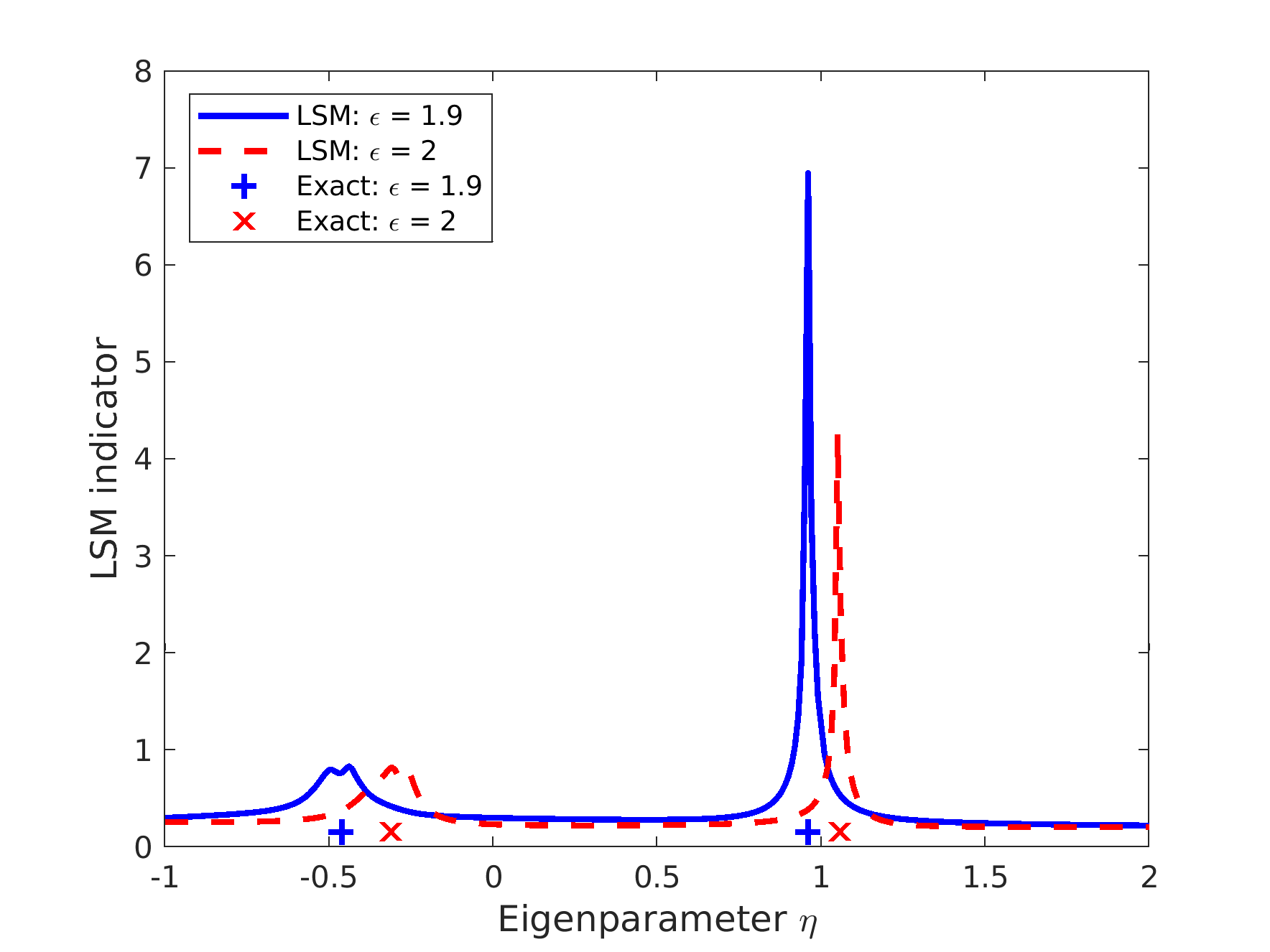}
\caption{$\gamma = 2$}
\label{fig:shift_gamma2_zoom}
\end{subfigure}
\caption{The shift of the eigenvalues due to a change in $\epsilon$ for two different values of $\gamma$. The figures in the top row demonstrate the detection of eigenvalues for $\epsilon = 1.9, 2$ for both $\gamma=0.5$ (left column) and $\gamma=2$ (right column). The figures in the bottom row are identical to the top row but with a narrower interval that highlights the shift in the most sensitive eigenvalues.}
\label{fig:shift}
\end{figure}

%\FloatBarrier

\section{Conclusion} \label{sec_conclusion}

We introduced a new eigenvalue problem related to electromagnetic scattering for potential use as a target signature in nondestructive testing of materials. This class of eigenvalues is generated by comparing the measured scattering data for a given medium to an auxiliary problem that depends on a parameter $\eta$ and may be computed independently of the measured scattering data. After showing that this auxiliary problem is well-posed, we established that a nonhomogeneous version of the modified interior transmission problem is of Fredholm type and investigated some properties of the eigenvalues, proving that the eigenvalues are discrete and that infinitely many eigenvalues exist if $\epsilon$ is real-valued. We concluded by showing that the eigenvalues may be determined from the measured electric far field data using the linear sampling method and providing a simple example for scattering by the unit ball with constant permittivity $\epsilon$. \par

Many questions remain open concerning eigenvalue problems generated from modified far field operators, including a general existence theory for the eigenvalues when $\epsilon$ is complex-valued and non-smooth in $B$. Of particular interest for electromagnetic problems of this type is the characterization of when the straightforward electromagnetic analogue of scalar eigenvalue problems possesses the Fredholm property. In the present context, we remarked in Section \ref{sec_introduction} that the eigenvalue problem \eqref{mp} may not possess this property in the range of $\epsilon|_D$, which in the example given consisted of a single point. For variable $\epsilon$ this range would be a larger set, and the solvability of the problem is unknown when $\eta$ falls in this range. However, this question was addressed for electromagnetic Stekloff eigenvalues in \cite{halla2,halla1}, and it is possible that similar techniques may be applied to modified electromagnetic transmission eigenvalues as well. Finally, results on the stability of modified electromagnetic transmission eigenvalues will be presented in a forthcoming manuscript, along with a limited existence result for eigenvalues corresponding to complex-valued $\epsilon$ based on classical perturbation theory from \cite{kato}.
 
\section*{Appendix} \label{app_sov}

We now provide some details about the separation of variables procedure for the auxiliary problem \eqref{aux} and the eigenvalue problem \eqref{mit} in the case where $D$ is chosen to be the unit ball in $\mathbb{R}^3$, $B=D$, and $\epsilon$ is constant in $D$. Separation of variables for the physical scattering problem \eqref{sc} is standard, and we refer to \cite{colton_kress,kirsch_hettlich} for a detailed treatment.  For the auxiliary problem, we assume that $\eta\neq0$ and we replace $\mathbf{E_0}$ in \eqref{aux} with $\mathbf{E}_0 + \eta^{-1}\nabla P$, which results in the equivalent problem of finding $\mathbf{E}_0^s\in \Hcurlloc{\mathbb{R}^3\setminus\overline{B}}$, $\mathbf{E}_0\in \Hcurl{B}$, and $P\in H_*^1(B) $ satisfying
\begin{subequations} \label{eqaux}
\begin{align}
\curl\curl \mathbf{E}_0^s - k^2 \mathbf{E}_0^s &= \mathbf{0} \text{ in } \mathbb{R}^3\setminus\overline{B}, \label{eqaux1} \\
\curl\gamma^{-1}\curl \mathbf{E}_0 - k^2\eta \mathbf{E}_0 &= \mathbf{0} \text{ in } B, \label{eqaux2} \\
\Delta P &= 0 \text{ in } B, \label{eqaux3} \\
\eta^{-1}\normal{P} + \un\cdot \mathbf{E}_0 &= 0 \text{ on } \partial B, \label{eqaux4} \\
\un\times\mathbf{E}_0 + \eta^{-1}\un\times\nabla P - \un\times\mathbf{E}_0^s &= \un\times \mathbf{E}^i \text{ on } \partial B, \label{eqaux5} \\
\un\times\gamma^{-1}\curl \mathbf{E}_0 - \un\times\curl \mathbf{E}_0^s &= \un\times\curl \mathbf{E}^i \text{ on } \partial B, \label{eqaux6} \\
\mathclap{\lim_{r\to\infty} \left(\curl \mathbf{E}_0^s\times\mathbf{x} - ikr\mathbf{E}_0^s\right) = 0.} \label{eqaux7}
\end{align}
\end{subequations}
In this form we may apply the standard approach for Maxwell's equations along with expansion methods for Laplace's equation found in \cite{kirsch_hettlich}, in which the scattered and total fields are of the form
\begin{align*}
\mathbf{E}_0^s(\mathbf{x}) &= \sum_{n=1}^\infty \sum_{m=-n}^n \Biggr[ \alpha_n^m \frac{\sqrt{n(n+1)}}{r} h_n^{(1)}(kr) Y_n^m(\mathbf{\hat{x}})\mathbf{\hat{x}} \\
&\hspace{8em} + \alpha_n^m \frac{(rh_n^{(1)}(kr))'}{r} \mathbf{U}_n^m(\hat{x}) + \beta_n^m h_n^{(1)}(kr) \mathbf{V}_n^m(\hat{x}) \Biggr], \; \mathbf{x}\in\mathbb{R}^3\setminus\overline{B}, \\
\mathbf{E}_0(\mathbf{x}) &= \sum_{n=1}^\infty \sum_{m=-n}^n \Biggr[ \delta_n^m \frac{\sqrt{n(n+1)}}{r} j_n(k\sqrt{\gamma\eta}r) Y_n^m(\mathbf{\hat{x}})\mathbf{\hat{x}} \\
&\hspace{8em} + \delta_n^m \frac{(rj_n(k\sqrt{\gamma\eta}r))'}{r} \mathbf{U}_n^m(\hat{x}) + \varphi_n^m j_n(k\sqrt{\gamma\eta}r) \mathbf{V}_n^m(\hat{x}) \Biggr], \; \mathbf{x}\in B,
%P(\mathbf{x}) &= \sum_{n=1}^\infty \sum_{m=-n}^n p_n^m r^n Y_m^n(\mathbf{\hat{x}}), \; \mathbf{x}\in B.
\end{align*}
and the scalar field $P$ is of the form
\begin{equation*}
P(\mathbf{x}) = \sum_{n=1}^\infty \sum_{m=-n}^n p_n^m r^n Y_m^n(\mathbf{\hat{x}}), \; \mathbf{x}\in B.
\end{equation*}
Here we have denoted by $\mathbf{U}_n^m$ and $\mathbf{V}_n^m$ the vector spherical harmonics
\begin{equation*}
\mathbf{U}_n^m(\hat{x}) = \frac{1}{\sqrt{n(n+1)}} \nabla_{\mathbb{S}^2} Y_n^m(\mathbf{\hat{x}}), \; \mathbf{V}_n^m(\mathbf{\hat{x}}) = \mathbf{\hat{x}}\times\mathbf{U}_n^m(\mathbf{\hat{x}}).
\end{equation*}
Applying the boundary conditions \eqref{eqaux4}--\eqref{eqaux6} (in a similar manner to \cite{camano_lackner_monk}) now implies that the coefficients satisfy
\begin{gather}
\begin{split} \label{system0}
\left( \begin{array}{ccccc} (rh_n^{(1)}(kr))'|_{r=1} & 0 & -(rj_n(k\sqrt{\gamma\eta}r))'|_{r=1} & 0 & -\eta^{-1} \sqrt{n(n+1)} \\
0 & h_n^{(1)}(k) & 0 & -j_n(k\sqrt{\gamma\eta}) & 0 \\
0 & (rh_n^{(1)}(kr))'|_{r=1} & 0 & -\gamma^{-1} (rj_n(k\sqrt{\gamma\eta}r))'|_{r=1} & 0 \\
h_n^{(1)}(k) & 0 & -\eta j_n(k\sqrt{\gamma\eta}) & 0 & 0 \\
0 & 0 & \sqrt{n(n+1)} j_n(k\sqrt{\gamma\eta}) & 0 & \eta^{-1} n \end{array} \right) \\
\times \left( \begin{array}{c} \alpha_n^m \\ \beta_n^m \\ \delta_n^m \\ \varphi_n^m \\ p_n^m \end{array} \right) = \left( \begin{array}{c} -a_n^m (rj_n(kr))'|_{r=1} \\ -b_n^m j_n(k) \\ -b_n^m (rj_n(kr))'|_{r=1} \\ -a_n^m j_n(k) \\ 0 \end{array} \right),
\end{split}
\end{gather}
where the coefficients $a_n$ and $b_n$ are chosen such that the incident field $\mathbf{E}^i$ may be expanded as
\begin{equation*}
\mathbf{E}^i(\mathbf{x}) = \sum_{n=1}^\infty \sum_{m=-n}^n \left[ a_n^m \frac{\sqrt{n(n+1)}}{r} j_n(kr) Y_n^m(\mathbf{\hat{x}})\mathbf{\hat{x}} + a_n^m \frac{(rj_n(kr))'}{r} \mathbf{U}_n^m(\hat{x}) + b_n^m j_n(kr) \mathbf{V}_n^m(\hat{x}) \right], \; \mathbf{x}\in\mathbb{R}^3.
\end{equation*}
In this case of a plane wave incident field with propagation direction $\mathbf{d}$ and polarization vector $\mathbf{p}$, these coefficients are given by
\begin{equation*}
a_n^m = -\frac{4\pi i^{n+1}}{k} \overline{\mathbf{U}_n^m(\mathbf{d})}^T \mathbf{p}, \; b_n^m =4\pi i^n \overline{\mathbf{V}_n^m(\mathbf{d})}^T \mathbf{p},
\end{equation*}
as a result of the Jacobi-Anger expansion (cf. \cite{colton_kress}). With the coefficients satisfying \eqref{system0}, the auxiliary far field pattern may be constructed by the formula
\begin{equation} \label{FFP}
\mathbf{E}_{0,\infty}(\mathbf{\hat{x}},\mathbf{d};\mathbf{p}) = -\frac{1}{k} \sum_{n=1}^\infty \frac{1}{i^{n+1}} \sum_{m=-n}^n \left[ \alpha_n^m \sqrt{n(n+1)}\mathbf{V}_n^m(\mathbf{\hat{x}}) + ik\beta_n^m \sqrt{n(n+1)}\mathbf{U}_n^m(\mathbf{\hat{x}}) \right].
\end{equation}
Following this same procedure for the modified electromagnetic transmission eigenvalue problem \eqref{mit} implies that $\eta\neq0$ is an eigenvalue if and only if it is a root of one of the determinant functions
\begin{subequations} \label{moddet}
\begin{align}
\tilde{d}_n^{(a)}(\eta) &= \left(1-\gamma^{-1}\right)j_n(k\sqrt{\epsilon})j_n(k\sqrt{\gamma\eta}) + k\sqrt{\epsilon} j_n'(k\sqrt{\epsilon})j_n(k\sqrt{\gamma\eta}) - k\sqrt{\gamma^{-1}\eta} j_n(k\sqrt{\epsilon}) j_n'(k\sqrt{\gamma\eta}), \label{moddet_a} \\
\tilde{d}_n^{(b)}(\eta) &= (\eta+n\epsilon)j_n(k\sqrt{\epsilon})j_n(k\sqrt{\gamma\eta}) + k\eta\sqrt{\epsilon} j_n'(k\sqrt{\epsilon})j_n(k\sqrt{\gamma\eta}) - k\epsilon\sqrt{\gamma\eta} j_n(k\sqrt{\epsilon}) j_n'(k\sqrt{\gamma\eta}). \label{moddet_b}
\end{align}
\end{subequations}
Compared to the standard determinant functions given by \eqref{det}, we see that the only difference is that $(\eta-\epsilon)$ in \eqref{det_b} is replaced with $(\eta+n\epsilon)$ in \eqref{moddet_b}. A computational study using the parameters from Figure \ref{fig_sov1} shows that the roots of these families of equations do not accumulate at the constant value of $\epsilon$.

%Compared to \eqref{det_a}--\eqref{det_b}, we see that the only difference is that $(\eta-\epsilon)$ is replaced with $(\eta+n\epsilon)$ in \eqref{moddet_b}, and we refer to Figure \ref{fig_sov3} for a graphical demonstration that the roots of these families of equations do not accumulate at the constant value of $\epsilon$. 

\FloatBarrier

\bibliographystyle{siamplain}
\bibliography{myreferences2}

\begin{thebibliography}{10}

\bibitem{audibert_cakoni_haddar}
{\sc L.~Audibert, F.~Cakoni, and H.~Haddar}, {\em New sets of eigenvalues in
  inverse scattering for inhomogeneous media and their determination from
  scattering data}, Inverse Problems, 33 (2017), pp.~125011, 28,
  \url{https://doi.org/10.1088/1361-6420/aa982f}.

\bibitem{audibert_chesnel_haddar}
{\sc L.~Audibert, L.~Chesnel, and H.~Haddar}, {\em Transmission eigenvalues
  with artificial background for explicit material index identification},
  Comptes Rendus Mathematique, 356 (2018), pp.~626--631.

\bibitem{audibert_chesnel_haddar2019}
{\sc L.~Audibert, L.~Chesnel, and H.~Haddar}, {\em Inside-outside duality with
  artificial backgrounds}, Inverse Problems, 35 (2019), p.~104008.

\bibitem{cakoni_colton_haddar}
{\sc F.~Cakoni, D.~Colton, and H.~Haddar}, {\em Inverse Scattering Theory and
  Transmission Eigenvalues}, CBMS-NSF Regional Conference Series in Applied
  Mathematics, Vol. 88, SIAM, Philadelphia, 2016.

\bibitem{cakoni_colton_meng_monk}
{\sc F.~Cakoni, D.~Colton, S.~Meng, and P.~Monk}, {\em Stekloff eigenvalues in
  inverse scattering}, SIAM Journal on Applied Mathematics, 76 (2016),
  pp.~1737--1763, \url{https://doi.org/10.1137/16M1058704}.

\bibitem{cakoni_colton_monk}
{\sc F.~Cakoni, D.~Colton, and P.~Monk}, {\em The Linear Sampling Method in
  Inverse Electromagnetic Scattering}, CBMS-NSF Regional Conference Series in
  Applied Mathematics, Vol. 80, SIAM, Philadelphia, 2011.

\bibitem{cakoni_nguyen}
{\sc F.~Cakoni and H.~M. Nguyen}, {\em On the discreteness of transmission
  eigenvalues for the {M}axwell equations}, \textnormal{arXiv:2004.14670},
  (2020).

\bibitem{camano_lackner_monk}
{\sc J.~Cama{\~{n}}o, C.~Lackner, and P.~Monk}, {\em Electromagnetic {S}tekloff
  eigenvalues in inverse scattering}, SIAM J. Math. Anal., 49 (2017),
  pp.~4376--4401, \url{https://doi.org/10.1137/16M1108893}.

\bibitem{chesnel}
{\sc L.~Chesnel}, {\em Interior transmission eigenvalue problem for {M}axwell's
  equations: the {$T$}-coercivity as an alternative approach}, Inverse
  Problems, 28 (2012), pp.~065005, 14,
  \url{https://doi.org/10.1088/0266-5611/28/6/065005}.

\bibitem{cogar}
{\sc S.~Cogar}, {\em A modified transmission eigenvalue problem for scattering
  by a partially coated crack}, Inverse Problems, 34 (2018), pp.~115003, 29,
  \url{https://doi.org/10.1088/1361-6420/aadb20}.

\bibitem{cogar2019}
{\sc S.~Cogar}, {\em New Eigenvalue Problems in Inverse Scattering}, PhD
  thesis, 2019,
  \url{https://search.proquest.com/docview/2268338078?accountid=13626}.

\bibitem{cogar2020}
{\sc S.~Cogar}, {\em Analysis of a trace class {S}tekloff eigenvalue problem
  arising in inverse scattering}, SIAM Journal on Applied Mathematics, 80
  (2020), pp.~881--905.

\bibitem{cogar_colton_meng_monk}
{\sc S.~Cogar, D.~Colton, S.~Meng, and P.~Monk}, {\em Modified transmission
  eigenvalues in inverse scattering theory}, Inverse Problems, 33 (2017),
  p.~125002, \url{http://stacks.iop.org/0266-5611/33/i=12/a=125002}.

\bibitem{cogar_colton_monk}
{\sc S.~Cogar, D.~Colton, and P.~Monk}, {\em Using eigenvalues to detect
  anomalies in the exterior of a cavity}, Inverse Problems, 34 (2018),
  pp.~085006, 27, \url{https://doi.org/10.1088/1361-6420/aac8ef}.

\bibitem{cogar_colton_monk2019}
{\sc S.~Cogar, D.~Colton, and P.~Monk}, {\em Eigenvalue problems in inverse
  electromagnetic scattering theory}, in Maxwell's Equations: Analysis and
  Numerics, U.~Langer, D.~Pauly, and S.~Repin, eds., Radon Series on
  Computational and Applied Mathematics, De Gruyter, 2019, ch.~5.

\bibitem{coltonkress2001}
{\sc D.~Colton and R.~Kress}, {\em On the denseness of {H}erglotz wave
  functions and electromagnetic {H}erglotz pairs in {S}obolev spaces}, Math.
  Methods Appl. Sci., 24 (2001), pp.~1289--1303,
  \url{https://doi.org/10.1002/mma.277}.

\bibitem{colton_kress}
{\sc D.~Colton and R.~Kress}, {\em Inverse Acoustic and Electromagnetic
  Scattering Theory}, Springer International, 4th~ed., 2019.

\bibitem{costabel}
{\sc M.~Costabel}, {\em A remark on the regularity of solutions of {M}axwell's
  equations on {L}ipschitz domains}, Math. Methods Appl. Sci., 12 (1990),
  pp.~365--368, \url{https://doi.org/10.1002/mma.1670120406}.

\bibitem{halla2}
{\sc M.~Halla}, {\em Electromagnetic {S}tekloff eigenvalues: approximation
  analysis}, \textnormal{arXiv:1909.00689},  (2019).

\bibitem{halla1}
{\sc M.~Halla}, {\em Electromagnetic {S}tekloff eigenvalues: existence and
  behavior in the selfadjoint case}, \textnormal{arXiv:1909.01983},  (2019).

\bibitem{kato}
{\sc T.~Kato}, {\em Perturbation Theory for Linear Operators}, Classics in
  Mathematics, Springer-Verlag, Berlin, 1995.
\newblock Reprint of the 1980 edition.

\bibitem{kirsch_hettlich}
{\sc A.~Kirsch and F.~Hettlich}, {\em The Mathematical Theory of Time-Harmonic
  Maxwell's Equations: Expansion-, Integral-, and Variational Methods},
  Springer, New York, 2014.

\bibitem{monk}
{\sc P.~Monk}, {\em Finite Element Methods for {M}axwell's Equations},
  Numerical Mathematics and Scientific Computation, Oxford University Press,
  New York, 2003,
  \url{https://doi.org/10.1093/acprof:oso/9780198508885.001.0001}.

\bibitem{nguyen2018}
{\sc H.~M. Nguyen}, {\em Superlensing using complementary media and reflecting
  complementary media for electromagnetic waves}, Adv. Nonlinear Anal., 7
  (2018), pp.~449--467, \url{https://doi.org/10.1515/anona-2017-0146}.

\bibitem{nguyen_sil}
{\sc H.~M. Nguyen and S.~Sil}, {\em Limiting absorption principle and
  well-posedness for the time-harmonic {M}axwell equations with anisotropic
  sign-changing coefficients}, Comm. Math. Phys., 379 (2020), pp.~145--176,
  \url{https://doi.org/10.1007/s00220-020-03805-1}.

\bibitem{wang_wang_xie}
{\sc B.~Wang, L.~Wang, and Z.~Xie}, {\em Accurate calculation of spherical and
  vector spherical harmonic expansions via spectral element grids}, Adv.
  Comput. Math., 44 (2018), pp.~951--985,
  \url{https://doi.org/10.1007/s10444-017-9569-1}.

\end{thebibliography}

\end{document}